\newcommand{\dd}{\,\mathrm{d}}
\newcommand{\dv}{\mathrm{div}}
\newcommand{\fder}[1]{\frac{\dd}{\dd #1}}
\newcommand{\rx}{\mathrm{x}}
\newcommand{\ry}{\mathrm{y}}
\newcommand{\auxil}{\mathfrak{A}}
\newcommand{\metr}{\mathbf{d}}
\newcommand{\ent}{\mathcal{W}}
\newcommand{\prb}{\mathscr{P}}
\newcommand{\prbb}{\mathscr{P}}
\newcommand{\ball}{\mathbb{B}}
\newcommand{\eps}{\varepsilon}
\newcommand{\W}{\mathbf{W}}
\newcommand{\m}{\mathfrak{m}}
\newcommand{\N}{{\mathbb{N}}}
\newcommand{\R}{{\mathbb{R}}}
\newcommand{\Rd}{{\mathbb{R}^d}}
\newcommand{\eins}[1]{\mathbf{1}_{#1}}
\newcommand{\Matn}{\R^{n\times n}}
\newcommand{\tT }{\mathrm{T}}
\newcommand{\supp}{\mathrm{supp}\,}
\newcommand{\diam}{\mathrm{diam}\,}
\newcommand{\einsvec}{\mathsf{e}}
\newcommand{\id}{\operatorname{id}}
\newcommand{\mom}[1]{\boldsymbol{\m}_2(#1)}
\newtheorem{definition}{Definition}[section]
\newtheorem{remark}[definition]{Remark}
\newtheorem{lemma}[definition]{Lemma}
\newtheorem{thm}[definition]{Theorem}
\newtheorem{prop}[definition]{Proposition}
\newtheorem{coro}[definition]{Corollary}
\newtheorem*{thm*}{Theorem}
\newtheorem*{coro*}{Corollary}
\newtheorem*{xmp*}{Example}
\begin{document}

\begin{abstract}
We consider a system of $n$ nonlocal interaction evolution equations on $\Rd$ with a differentiable matrix-valued interaction potential $W$. Under suitable conditions on convexity, symmetry and growth of $W$, we prove $\lambda$-geodesic convexity for some $\lambda\in\R$ of the associated interaction energy with respect to a weighted compound distance of Wasserstein type. In particular, this implies existence and uniqueness of solutions to the evolution system. In one spatial dimension, we further analyse the qualitative properties of this solution in the non-uniformly convex case. We obtain, if the interaction potential is sufficiently convex far away from the origin, that the support of the solution is uniformly bounded. Under a suitable Lipschitz condition for the potential, we can exclude finite-time blow-up and give a partial characterization of the long-time behaviour. 
\end{abstract}

\title[System of nonlocal interaction equations]{Geodesically convex energies and confinement of solutions for a multi-component system of nonlocal interaction equations}
\author[Jonathan Zinsl]{Jonathan Zinsl}
\address{Zentrum f\"ur Mathematik \\ Technische Universit\"at M\"unchen \\ 85747 Garching, Germany}
\email{zinsl@ma.tum.de}
\keywords{Nonlocal evolution equation, interaction potential, geodesic convexity, Wasserstein distance, gradient flow, multi-species system}
\date{\today}
\subjclass[2010]{35R09, 35B30, 35B40}
\maketitle


\section{Introduction}\label{sec:intro}
\subsection{The evolution system and its variational structure}
In this work, we analyse the following system of $n\in\N$ nonlocal interaction evolution equations
\begin{align}
\label{eq:pdend}
\begin{split}
\partial_t \mu_1&=\dv[m_1\mu_1 \nabla (W_{11}\ast\mu_1+W_{12}\ast \mu_2+\ldots+W_{1n}\ast\mu_n)],\\
\partial_t \mu_2&=\dv[m_2\mu_2 \nabla (W_{21}\ast\mu_1+W_{22}\ast \mu_2+\ldots+W_{2n}\ast\mu_n)],\\
&\vdots\\
\partial_t \mu_n&=\dv[m_n\mu_n \nabla (W_{n1}\ast\mu_1+W_{n2}\ast \mu_2+\ldots+W_{nn}\ast\mu_n)].
\end{split}
\end{align}
The sought-for $n$-vector-valued solution $\mu(t)=(\mu_1(t),\ldots,\mu_n(t))$ describes the distribution or concentration of $n$ different populations or agents on $\Rd$ at time $t\ge 0$, $d\in\N$ denoting the spatial dimension. Apart from the constant \emph{mobility magnitudes} $m_1,\ldots,m_n>0$, system \eqref{eq:pdend} is mainly governed by the matrix-valued \emph{interaction potential} $W:\,\Rd\to\Matn$ satisfying the following requirements:
\begin{enumerate}[({W}1)]
\item $W(z)$ is a symmetric matrix for each $z\in\R^d$.
\item $W_{ij}\in C^1(\R^d;\R)$ for all $i,j\in \{1,\ldots,n\}$.
\item $W(z)=W(-z)$ for all $z\in\R^d$.
\item There exists a matrix $\overline{W}\in\R^{n\times n}$ such that for each $i,j\in \{1,\ldots,n\}$ and all $z\in\R^d$: $$|W_{ij}(z)|\le \overline{W}_{ij}(1+|z|^2).$$
\item There exists a symmetric matrix $\kappa\in\R^{n\times n}$ such that, for each $i,j\in \{1,\ldots,n\}$, $W_{ij}$ is \break $\kappa_{ij}$-(semi-)convex, i.e. the map $z\mapsto W_{ij}(z)-\frac12\kappa_{ij}|z|^2$ is convex.
\end{enumerate}
System \eqref{eq:pdend} possesses a formal gradient flow structure: On the subspace $\prbb$ of those $n$-vector Borel measures on $\Rd$ with fixed \emph{total masses} $\mu_j(\Rd)=p_j>0$, fixed \emph{(joint, weighted) center of mass} 
\begin{align*}
\sum_{j=1}^n \frac1{m_j}\int_{\Rd}x\dd \mu_j(x)&=E\in\Rd,
\end{align*} 
and finite second moments $\mom{\mu_j}:=\int_\Rd |x|^2\dd\mu_j(x)$, the \emph{multi-component interaction energy} functional
\begin{align}
\label{eq:energy}
\ent(\mu):=\frac12\sum_{i=1}^n\sum_{j=1}^n \int_\Rd\int_\Rd W_{ij}(x-y)\dd \mu_i(x)\dd \mu_j(y)
\end{align}
induces \eqref{eq:pdend} as its gradient flow w.r.t. the following compound metric of Wasserstein-type distances for each of the components of the vector measures $\mu^0,\mu^1\in\prbb$:
\begin{align}
\label{eq:metric}
\W_\prbb(\mu^0,\mu^1)&=\left[\sum_{j=1}^n \frac1{m_j}\inf\left\{\int_{\R^d\times\R^d} |x-y|^2\dd\gamma_j(x,y)\,\bigg.\bigg|\, \gamma_j\in\Gamma(\mu^0_j,\mu^1_j)\right\}\right]^{1/2},
\end{align}
where $\Gamma(\mu^0_j,\mu^1_j)$ is the subset of finite Borel measures on $\R^d\times\R^d$ with marginals $\mu^0_j$ and $\mu^1_j$. It easily follows from the properties of the usual Wasserstein distance for probability measures with finite second moment that $\W_\prbb$ defines a distance on the (geodesic) space $\prb$ (see for instance \cite{villani2003,savare2008} for more details on optimal transport and gradient flows). Note that (W1)-(W5) imply (at least formally) that $\prb$ is a positively invariant set along the flow of the evolution \eqref{eq:pdend}. In this work, we give a rigorous proof for these formal arguments.
\subsection{Main results}
We present our main results and improvements upon the case of a scalar interaction equation ($n=1$).
\subsubsection{Convexity along generalized geodesics and existence of solutions}
We obtain in the case of genuine \emph{irreducible} systems (see Definition \ref{def:irred} below) a novel sufficient condition on the model parameters such that the interaction energy functional $\ent$ becomes $\lambda$-convex along \emph{generalized} geodesics on $\prbb$ with respect to the distance $\W_\prbb$ for some $\lambda\in\R$:
\begin{thm*}[Convexity of $\ent$ and generation of gradient flow solution]
Let $n>1$, assume that \textup{(W1)--(W5)} hold and let \eqref{eq:pdend} be irreducible (see Definition \ref{def:irred}). Define for $i\in \{1,\ldots,n\}$ the quantity $\eta_i:=\min\limits_{j\neq i}\kappa_{ij}m_j\in\R$. Then, $\ent$ is $\lambda$-convex along generalized geodesics on $\prbb$ w.r.t. $\W_\prbb$ (see Definition \ref{definition:generalconv}) for all
\begin{align}
\label{eq:convcond}
\lambda&\le\min_{i\in\{1,\ldots,n\}}\left[p_i\min(0,m_i\kappa_{ii}-\eta_i)+\frac12\sum_{j=1}^n p_j\left(\eta_j+\eta_i\frac{m_i}{m_j}\right)\right].
\end{align}
Furthermore, the system \eqref{eq:pdend}, endowed with an initial datum $\mu^0\in\prb$, induces a $\lambda$-contractive gradient flow \break $\mu\in AC^2_{\mathrm{loc}}([0,\infty);(\mathscr{P},\W_\prbb))$ (see Theorem \ref{thm:exun}).\\
If $\ent$ is uniformly geodesically convex, i.e., $\lambda$-convex along generalized geodesics for some $\lambda>0$, the measure
\begin{align*}
\mu^\infty:=(p_1,\ldots,p_n)^\tT\delta_{x^\infty},\qquad\text{with}\qquad x^\infty:=E\left[\sum_{j=1}^n\frac{p_j}{m_j}\right]^{-1}\in\Rd,
\end{align*}
is the unique minimizer---the \emph{ground state}---of $\ent$ and the unique stationary state of \eqref{eq:pdend} on $\prb$. It is globally asymptotically stable: the gradient flow solution converges exponentially fast in $(\prb,\W_\prbb)$ with rate $\lambda$ to $\mu^\infty$. 
\end{thm*}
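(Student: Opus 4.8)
The plan is to establish the convexity estimate \eqref{eq:convcond} first, since the remaining assertions follow from abstract gradient-flow theory once convexity is in hand. To compute the second derivative of $\ent$ along a generalized geodesic, I would fix a base measure and two endpoints $\mu^0,\mu^1\in\prbb$, push forward the base by the interpolated maps $((1-s)\,\id + s\,T^j_1, (1-s)\,\id + s\,T^j_2)$ componentwise (where $T^j_1,T^j_2$ are the optimal maps from the base to $\mu^0_j,\mu^1_j$), and differentiate the double integral in \eqref{eq:energy} twice in $s$. Because each $W_{ij}$ is $\kappa_{ij}$-semiconvex, the Hessian-type lower bound on $W_{ij}$ contributes a term proportional to $\kappa_{ij}|(x_1-y_1)-(x_2-y_2)|^2$; after summing over $i,j$ and weighting by the mobility factors $1/m_j$ that appear in $\W_\prbb$, one collects these into a quadratic form in the displacement vectors $\delta_j := (\id - T^j_{\text{interp}})$-type increments. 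The key algebraic point is to bound this quadratic form below by $\lambda\,\W_\prbb^2$ with $\lambda$ as in \eqref{eq:convcond}; here the diagonal terms $m_i\kappa_{ii}$ and the off-diagonal minima $\eta_i = \min_{j\neq i}\kappa_{ij}m_j$ enter, and irreducibility is what guarantees the off-diagonal coupling is genuinely present so that the estimate is not vacuous. I expect this matrix/quadratic-form estimate — splitting the cross terms correctly and seeing why the stated $\lambda$ is the right (sharp) constant — to be the main obstacle.

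With $\lambda$-convexity along generalized geodesics established, existence and uniqueness of the gradient flow $\mu\in AC^2_{\mathrm{loc}}([0,\infty);(\mathscr{P},\W_\prbb))$, together with $\lambda$-contractivity, is a direct application of the abstract theory for gradient flows of $\lambda$-geodesically convex functionals on the (complete, geodesic) metric space $(\prb,\W_\prbb)$ in the style of \cite{savare2008}; one must only check that $\ent$ is proper and lower semicontinuous on $\prb$ (using (W2), (W4) and weak lower semicontinuity of the interaction integral) and that $(\prb,\W_\prbb)$ is a closed subset of the product of Wasserstein spaces preserved under the relevant constructions — the constraints on total masses and weighted center of mass are linear and hence closed, so this is routine. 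I would also note that positive invariance of $\prb$ follows because the continuity equations in \eqref{eq:pdend} conserve each mass $p_j$ and, testing against $x$, conserve the weighted center of mass $E$.

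For the final paragraph (the uniformly convex case $\lambda>0$), the strategy is: first identify $\mu^\infty$ as a critical point. A Dirac mass vector $(p_1,\dots,p_n)^\tT\delta_{x^\infty}$ satisfies $W_{ij}\ast\mu_j(x) = p_j W_{ij}(x-x^\infty)$, whose gradient vanishes at $x=x^\infty$ for every $i$ by (W3) (even potentials have a critical point at the origin), so it is a stationary state; it also lies in $\prb$ precisely when $\sum_j \frac{1}{m_j}p_j x^\infty = E$, i.e. for the stated $x^\infty$. Then, $\lambda$-convexity with $\lambda>0$ forces $\ent$ to be strictly convex along generalized geodesics, hence to have at most one minimizer, and any minimizer is a stationary state; combined with the standard entropy–entropy-dissipation (EDI/EDE) argument — $\frac{\dd}{\dd t}\big(\ent(\mu(t))-\ent(\mu^\infty)\big) \le -2\lambda\big(\ent(\mu(t))-\ent(\mu^\infty)\big)$ and the quadratic lower bound $\ent(\mu)-\ent(\mu^\infty)\ge \frac{\lambda}{2}\W_\prbb(\mu,\mu^\infty)^2$ — one gets exponential convergence of the flow to $\mu^\infty$ in $\W_\prbb$ with rate $\lambda$, which in turn pins down $\mu^\infty$ as the unique minimizer and unique stationary state. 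The only mild subtlety here is justifying that the abstract contraction/convergence estimates, which a priori are formulated for curves staying in the space, apply at the degenerate point $\mu^\infty$ (a vector of Diracs still has finite second moment, so it is a legitimate element of $\prb$, and $\W_\prbb$-distance to it is just a weighted sum of second moments about $x^\infty$), so this step should go through cleanly once the setup is in place.
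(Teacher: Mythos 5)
Your framework is the same as the paper's: approximate by absolutely continuous measures, invoke the sufficient criterion for convexity along generalized geodesics in the style of \cite[Prop.~9.2.10]{savare2008}, reduce to a pointwise estimate using the $\kappa_{ij}$-semiconvexity of $W_{ij}$, and then obtain the rest (existence, uniqueness, contraction, and the identification of $\mu^\infty$ via $\nabla W_{ij}(0)=0$) from the abstract theory. However, the heart of the theorem --- why the quadratic form $\frac12\sum_{i,j}\int\int \kappa_{ij}\,|(t_j(x)-\tilde t_j(x))-(t_i(y)-\tilde t_i(y))|^2\dd\mu_j\dd\mu_i$ is bounded below by $\lambda_0\sum_i \frac1{m_i}\int|t_i-\tilde t_i|^2\dd\mu_i$ with the \emph{specific} constant \eqref{eq:convcond} --- is exactly the step you defer as ``the main obstacle,'' and your sketch is missing the one structural ingredient that makes it work. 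After replacing the off-diagonal $\kappa_{ij}$ by $\eta_i/m_j$ and expanding the squares, one is left with cross terms of the form $-\sum_i\bigl(\sum_{j\neq i}\frac1{m_j}\int(t_j-\tilde t_j)\dd\mu_j\bigr)^\tT\bigl(\eta_i\int(t_i-\tilde t_i)\dd\mu_i\bigr)$, which have no sign and cannot be absorbed by a generic quadratic-form argument. The paper controls them by using that the weighted center of mass is \emph{fixed} on $\prbb$, so that $\sum_{j\neq i}\frac1{m_j}\int(t_j-\tilde t_j)\dd\mu_j=-\frac1{m_i}\int(t_i-\tilde t_i)\dd\mu_i$; this turns the cross terms into squared mean displacements, and Jensen's inequality (in the case $\eta_i/m_i-\kappa_{ii}<0$) then closes the estimate and produces exactly the two alternatives in the $\min(0,\cdot)$ of \eqref{eq:convcond}. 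Without invoking this constraint your plan cannot reach the stated $\lambda$ (compare the scalar case, where one only gets $\min(0,\kappa)$ on the full Wasserstein space and $\kappa$ only on the fixed-center-of-mass subspace).

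Two smaller points. First, irreducibility does not ``guarantee the off-diagonal coupling is genuinely present so that the estimate is not vacuous'': the inequality with $\eta_i=\min_{j\neq i}\kappa_{ij}m_j$ holds regardless; irreducibility only ensures that \eqref{eq:convcond} is the appropriate formula (for reducible systems one should instead take the minimum of the moduli of the independent subsystems, applying McCann's scalar criterion to singleton blocks). Second, for the hypotheses of the sufficient criterion you need not just lower semicontinuity but an approximating sequence in $\prbb^{\mathrm{ac}}$ that stays in $\prbb$ (same masses \emph{and} same weighted center of mass) with convergence of the energies; the paper constructs this explicitly by mollification and checks that convolution with an even kernel preserves $E$. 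Your treatment of the existence/uniqueness part and of the uniformly convex case is otherwise consistent with the paper's (the paper deduces uniqueness of the steady state directly from the contraction estimate rather than from an entropy--dissipation inequality, but both are standard consequences of $\lambda$-convexity with $\lambda>0$).
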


\emph{Comparison to the scalar case.~}
Our condition on convexity along generalized geodesics \eqref{eq:convcond} extends the well-known condition by McCann \cite{mccann1997} to multiple components: For $n=1$, if the interaction potential $W$ is $\kappa$-convex on $\Rd$, then $\ent$ is $m\kappa p$-convex along generalized geodesics in the space $\prbb$ defined as above for $n=1$. The novel condition \eqref{eq:convcond} is a \emph{non-straightforward} generalization of McCann's condition to the case of systems $n>1$, especially if one seeks a strictly positive modulus of convexity $\lambda$. On the one hand, requiring that all cross- and self-interactions are attractive (reflected by strictly positive $\kappa_{ij}$ for all $i,j\in\{1,\ldots,n\}$ in assumption (W5)) provides $\lambda>0$ in \eqref{eq:convcond} but is \emph{far from optimal}. On the other hand, the value of $\lambda$ in \eqref{eq:convcond} does not seem to depend (at least not in an obvious manner) on the eigenvalues of the matrix $\kappa\in\R^{n\times n}$ from (W5). Hence, our new convexity condition uncovers---in a quantitative way---the dynamics of a multi-component network of (e.g. particle) interactions driven by $C^1$ potentials. In contrast to \cite{carrillo2011,difranc2013}, we restrict ourselves to the symmetric and smooth framework provided by (W1)--(W5), even if existence and uniqueness of solutions possibly could also be proved in more general settings. Let us now illustrate our novel condition \eqref{eq:convcond} by several examples. Set, for simplicity, $m_j=1$ and $p_j=1$ for all $j\in\{1,\ldots,n\}$. Then, \eqref{eq:convcond} simplifies to
\begin{align*}
\lambda&\le\min_{i\in\{1,\ldots,n\}}\left[\min(0,\kappa_{ii}-\eta_i)+\frac12\sum_{j=1}^n \eta_j+\frac{n}{2}\eta_i\right]\qquad (n>1),
\end{align*}
where here $\eta_i=\min\limits_{j\neq i}\kappa_{ij}$. In the even more specific setting of two species ($n=2$), one has $\eta_1=\kappa_{12}=\eta_2$ and
\begin{align*}
\lambda&\le\min\{\kappa_{11},\kappa_{12},\kappa_{22}\}+\kappa_{12}\qquad(n=2).
\end{align*}
Clearly, if all interactions are attractive, e.g. if
\begin{align*}
\kappa=\begin{pmatrix}2 & 1 \\ 1 & 2\end{pmatrix},
\end{align*}
then $\ent$ is uniformly convex (here, the modulus is equal to $2$) along generalized geodesics. One can also allow for repulsive self-interaction if overall attraction dominates repulsion, e.g. if
\begin{align*}
\kappa=\begin{pmatrix}-1 & 2 \\ 2 & -1\end{pmatrix}.
\end{align*}
Formula \eqref{eq:convcond} yields in this case the $1$-convexity of $\ent$. Note that $\kappa$ is an indefinite matrix. In contrast to that, repulsive cross-interactions, as in the case
\begin{align*}
\kappa=\begin{pmatrix}2 & -1 \\ -1 & 2\end{pmatrix},
\end{align*}
may lead to non-uniform convexity $\lambda\le 0$; here $\lambda\le -2$. Remarkably, $\kappa$ is positive definite.
\subsubsection{Qualitative behaviour for non-uniformly convex energy}
If $\ent$ is $\lambda$-geodesically convex with only $\lambda\le 0$, the dynamics of system \eqref{eq:pdend} are more involved. There, we restrict to one spatial dimension ($d=1$) and rewrite the system in terms of \emph{inverse distribution functions}: Given the (scaled) cumulative distribution functions 
\begin{align}
F_i(t,x)&=\int_{-\infty}^x \frac1{p_i}\dd \mu_i(t,y)\quad\in [0,1],\label{eq:cumF}
\end{align}
let $u_i$ be their corresponding pseudo-inverse, i.e.
\begin{align}
u_i(t,z)&=\inf\{x\in\R:\,F_i(t,x)> z\}\quad (\text{for }z\in [0,1)).\label{eq:invu}
\end{align}
Then, system \eqref{eq:pdend} transforms into (cf. Section \ref{sec:qual})
\begin{align}
\label{eq:pdeinv}
\partial_t u_i(t,z)&=m_i\sum_{j=1}^n p_j\int_0^1 W_{ij}'(u_j(t,\xi)-u_i(t,z))\dd \xi\qquad (i=1,\ldots,n).
\end{align}
In terms of system \eqref{eq:pdeinv}, if $\mu\in AC^2_{\mathrm{loc}}([0,\infty);(\mathscr{P},\W_\prbb))$, one has $u\in AC^2_{\mathrm{loc}}([0,\infty);L^2([0,1];\R^n))$ and for all $t\ge 0$ and all $i\in \{1,\ldots,n\}$, $u_i(t,\cdot)$ is a non-decreasing \emph{càdlàg} function on $(0,1)$. Section \ref{sec:qual} is devoted to the analysis of the qualitative behaviour of the solution $\mu$ to \eqref{eq:pdend} by means of investigation of the corresponding solution $u$ to \eqref{eq:pdeinv}. In that part, our main result is a \emph{confinement} property of the solution: 
\begin{thm*}[Confinement of solutions]
Assume that \textup{(W1)--(W5)} are satisfied and let an initial datum $\mu^0$ with compact support be given. Then, the solution $\mu$ to \eqref{eq:pdend} \emph{propagates at finite speed}: for each $T>0$, there exists a constant $K(T,\mu^0)>0$ depending on $T$ and $\mu^0$ such that
\begin{align*}
\supp\mu_i(t)&\subset [-K(T,\mu^0),K(T,\mu^0)]\quad\forall~t\in [0,T]\quad\forall~i\in\{1,\ldots,n\}.
\end{align*}
Assume now in addition that the system \eqref{eq:pdend} is \emph{irreducible at large distance} (see Definition \ref{def:selfc}) and that the following condition is satisfied: there exist $R>0$ and a matrix $C\in\Matn$ such that for each $i,j\in\{1,\ldots,n\}$, the map $W_{ij}$ is $C_{ij}$-(semi-)convex on the interval $(R,\infty)$ and the following holds: \\ 
If $n=1$, then $C>0$. If $n>1$, with $\tilde\eta_i:=\min\limits_{j\neq i}C_{ij}p_j$ for all $i\in\{1,\ldots,n\}$,
\begin{align*}
\tilde\lambda_0&:=\min_{i\in\{1,\ldots,n\}}\left[p_i\min(0,m_iC_{ii}-\tilde\eta_i)+\frac12\sum_{j=1}^n p_j\left(\tilde\eta_j+\tilde\eta_i\frac{m_i}{m_j}\right)\right]>0.
\end{align*}
Then, the solution $\mu$ to \eqref{eq:pdend} is \emph{uniformly confined}: there exists $K(\mu^0)>0$ such that
\begin{align}
\label{eq:finsp}
\supp\mu_i(t)&\subset [-K(\mu^0),K(\mu^0)]\quad\forall~t\ge 0\quad\forall~i\in\{1,\ldots,n\}.
\end{align}
\end{thm*}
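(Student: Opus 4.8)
\emph{Proof plan.} The plan is to work throughout in the inverse-distribution-function formulation \eqref{eq:pdeinv}. Since $u_i(t,\cdot)$ is non-decreasing and $\supp\mu_i(t)$ is the closure of $\bigl(u_i(t,0^+),u_i(t,1^-)\bigr)$, controlling $\supp\mu_i(t)$ amounts to controlling the two boundary values $u_i(t,0^+)$ and $u_i(t,1^-)$. A preliminary fact, used everywhere, is that (W4) and (W5) force each $W_{ij}'$ to grow at most linearly: since $z\mapsto W_{ij}'(z)-\kappa_{ij}z$ is non-decreasing, comparing $W_{ij}(2z)-W_{ij}(z)=\int_z^{2z}W_{ij}'(s)\dd s$ with the quadratic bound of (W4) gives $|W_{ij}'(z)|\le A_{ij}(1+|z|)$ for all $z$, with $A_{ij}$ explicit in $\overline W_{ij}$, $\kappa_{ij}$ and $\sup_{[-1,1]}|W_{ij}'|$.

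For the \emph{finite-speed} statement I would set $M(t):=\max_i\max\bigl(u_i(t,1^-),-u_i(t,0^+)\bigr)$, which is locally absolutely continuous because $u\in AC^2_{\mathrm{loc}}$. Inserting the linear bound into \eqref{eq:pdeinv} yields, for a.e.\ $t$ and every $z$,
\[
|\partial_t u_i(t,z)|\le m_i\sum_{j=1}^n p_j\int_0^1 A_{ij}\bigl(1+|u_j(t,\xi)-u_i(t,z)|\bigr)\dd\xi\le c_1+c_2 M(t),
\]
with $c_1,c_2$ depending only on the data. Taking the supremum over $i$ and over the two boundary values of $z$ and applying Gronwall's lemma gives $M(t)\le(M(0)+c_1t)e^{c_2t}$; since $\mu^0$ has compact support $M(0)<\infty$, so $K(T,\mu^0):=(M(0)+c_1T)e^{c_2T}$ works.

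For the \emph{uniform} bound I would translate coordinates so that the conserved weighted barycenter $\sum_j\frac{p_j}{m_j}\int_0^1 u_j(t,z)\dd z$ (which equals $x^\infty$) is $0$. As each $\int_0^1 u_j(t,\cdot)$ lies between $\min_i u_i(t,0^+)$ and $\max_i u_i(t,1^-)$ and $\sum_j\frac{p_j}{m_j}>0$, it follows that $a(t):=\min_i u_i(t,0^+)\le 0\le\max_i u_i(t,1^-)=:b(t)$, so it is enough to bound $b(t)$ above and $a(t)$ below, uniformly in $t\ge 0$. Using $W_{ij}(-z)=W_{ij}(z)$ and the far-field semiconvexity one gets, for $s<-R$, the affine upper bound $W_{ij}'(s)\le C_{ij}s-\gamma_{ij}$ with $\gamma_{ij}=W_{ij}'(R)-C_{ij}R$, while $|W_{ij}'|$ is bounded on $[-R,R]$. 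For a.e.\ $t$, picking an index $i_0$ attaining $b(t)=u_{i_0}(t,1^-)$, a Danskin-type argument gives $\tfrac{\dn}{\dn t}b(t)=\partial_t u_{i_0}(t,1^-)$; all arguments $u_k(\xi)-b(t)$ occurring there are $\le0$, and splitting each $\xi$-integral at $-R$, using the affine bound, the boundedness on $[-R,0]$, and $\int_0^1(u_k(\xi)-b)\dd\xi=\bar u_k-b$ (where $\bar u_k$ is the barycenter of species $k$), I expect to reach
\[
\tfrac{\dn}{\dn t}b(t)\ \le\ -\Bigl(m_{i_0}\textstyle\sum_{k}p_kC_{i_0k}\Bigr)b(t)\ +\ m_{i_0}\textstyle\sum_k p_kC_{i_0k}\,\bar u_k(t)\ +\ (\text{const}),
\]
with a symmetric lower inequality for $a(t)$. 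Closing the argument needs two inputs tied to $\tilde\lambda_0>0$ and to irreducibility at large distance: first, that $\sum_k p_kC_{ik}\ge\tilde\lambda_0>0$ for every $i$ (the same elementary manipulation as the one producing the convexity constant of the first theorem), so the restoring coefficient is uniformly positive; and second, that the barycenters $\bar u_k(t)$ stay uniformly bounded. The latter I would get from a dissipation estimate for $V(t):=\frac12\sum_i\frac{p_i}{m_i}\int_0^1 u_i(t,z)^2\dd z$, whose derivative is $-\frac12\sum_{i,j}p_ip_j\int_0^1\int_0^1 (u_j(\xi)-u_i(z))\,W_{ij}'(u_j(\xi)-u_i(z))\dd\xi\dd z$ and which, via lower bounds $sW_{ij}'(s)\ge(C_{ij}-\eps)s^2-c_{ij}(\eps)$ and the same $\tilde\lambda_0$-combination (irreducibility supplying the connectivity that makes the barycenter contribution coercive), satisfies $\dot V\le -2\tilde\lambda_0V+K_0$, hence $\sup_t V(t)<\infty$ and in particular $\sup_t|\bar u_k(t)|<\infty$. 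A comparison argument then yields $\sup_{t\ge0}b(t)<\infty$ and $\inf_{t\ge0}a(t)>-\infty$, which is \eqref{eq:finsp}.

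I expect the real obstacle to be this multi-species step: when an interaction is repulsive at large distance, the velocity of one species' boundary cannot be estimated in isolation, and the cancellation that still makes the system confining is precisely the one packaged in $\tilde\lambda_0$. Making this rigorous means replaying the convexity computation behind the first theorem while carefully carrying the error terms generated on the region $|s|\le R$ where no convexity is assumed, on top of the measure-theoretic bookkeeping (Danskin-type differentiation of $\max_i u_i(t,1^-)$, differentiation under the $\xi$-integral for merely $AC^2$ solutions).
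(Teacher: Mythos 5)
Your plan is essentially the paper's proof: both parts are run on the pseudo-inverses, finite speed comes from the linear growth of $W_{ij}'$ (the paper's Lemma \ref{lem:grad}) plus Gronwall on the boundary values, and uniform confinement comes from (i) a $t$-uniform second-moment bound driven by the $\tilde\lambda_0$-coercivity of the far-field semiconvexity, and (ii) a differential inequality $\partial_t u_i(t,1^-)\le -m_i\tilde\lambda_0\,u_i(t,1^-)+C''$ for near-maximal indices, using the affine far-field bound on $W_{ij}'$, boundedness of $W_{ij}'$ on $[-R,R]$, the inequality $\sum_j C_{ij}p_j>0$ from Proposition \ref{rem:necconv}, and the moment bound. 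Two points where you diverge or need repair. First, for the moment bound you differentiate $V(t)=\frac12\sum_i\frac{p_i}{m_i}\|u_i(t,\cdot)\|_{L^2}^2$ and aim for $\dot V\le-2\tilde\lambda_0 V+K_0$; the paper instead uses for free that $\ent(\mu(t))\le\ent(\mu^0)$ along the gradient flow together with the pointwise lower bound $W_{ij}(z)\ge\frac12 C_{ij}^\eps z^2-D$, and then applies the identical $\tilde\lambda_0$-algebra from Theorem \ref{thm:geoconv} (with $t_i\equiv0$, $\tilde t_i=\id$) to the resulting quadratic form. Both rest on the same coercivity computation, but the energy-monotonicity route avoids justifying the time-differentiation and symmetrization of $V$, so I would adopt it. Second, your Gronwall argument for finite speed is run on $M(t)=\max_i\max(u_i(t,1^-),-u_i(t,0^+))$, whose finiteness for $t>0$ is exactly what is to be proved and whose local absolute continuity does \emph{not} follow from $u\in AC^2_{\mathrm{loc}}([0,\infty);L^2([0,1];\R^n))$ (that gives no control on the essential supremum); moreover the bound $\int_0^1|u_j(t,\xi)|\dd\xi\le M(t)$ is circular. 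The paper's fix is to work with $u_i(t,1-\eps)$ for fixed $\eps>0$ (always finite and $AC$), to bound the coupling term by $\sum_j\frac{p_j}{m_j}\|u_j(t,\cdot)\|_{L^2}^2=\W_\prbb^2(\mu(t),\delta_0\einsvec)$, which is controlled on $[0,T]$ by the metric derivative of the $AC^2$ curve, and only then to let $\eps\searrow0$; you should restructure that step accordingly. The Danskin-type differentiation of the max you worry about is sidestepped in the paper by deriving the inequality for every index in the band $u_i(t,1^-)\ge\max_j u_j(t,1^-)-R$.
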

%

Thus, in many cases, mass cannot escape to infinity. In contrast, is it possible to have \emph{concentration} in finite time, i.e. can it occur that absolutely continuous solutions collapse to measures with nonvanishing singular part in finite time? The answer is negative for Lipschitz-continuous $W_{ij}'$ and absolutely continuous initial data with continuous and bounded Lebesgue density (cf. Proposition \ref{prop:noblowup}).

Section \ref{subsec:long} is devoted to the study of the long-time behaviour of the solution to \eqref{eq:pdend}. We first prove (cf. Theorem \ref{prop:ltb}) that if the solution is \emph{a priori} confined to a compact set, the $\omega$-limit set of the system only contains steady states of \eqref{eq:pdend}. More specifically, assume that \eqref{eq:finsp} is true and that all $W_{ij}'$ are Lipschitz-continuous on the interval $[-2K,2K]$. Then,
\begin{align}
\label{eq:entdiss}
\lim_{t\to\infty}\left(\fder{t}\ent(\mu(t))\right)&=0.
\end{align}
Moreover, for each sequence $t_k\to\infty$, there exists a subsequence and a steady state $\overline{\mu}\in\prb$ of \eqref{eq:pdend} such that on the subsequence
\begin{align*}
\lim_{k\to\infty} \W_1(\mu_i(t_k),\overline{\mu}_i)=0\quad\forall\, i\in\{1,\ldots,n\}.
\end{align*}
There, $\W_1$ denotes the $L^1$-Wasserstein distance between finite measures. However, this large-time limit $\overline{\mu}$ is not unique since it depends both on the sequence $(t_k)_{k\in\N}$ chosen and the extracted subsequence. 

Even if the interaction potential does neither yield uniform geodesic convexity of the energy nor is confining and Lipschitz, we may observe a $\delta$\emph{-separation phenomenon}: If the initial datum has compact support and the model parameters admit $\sum_{j=1}^n \kappa_{ij}p_j>0$ for all $i$, the diameter of the support of the solution shrinks exponentially fast over time (cf. Proposition \ref{prop:delta}). Still, the solution does in general not converge to a fixed steady state. However, in the uniformly geodesically convex regime ($\lambda>0$ in \eqref{eq:convcond}), we obtain convergence even w.r.t. the stronger topology of the $L^\infty$-Wasserstein distance $\W_\infty$,
\begin{align*}
\lim_{t\to\infty} \W_\infty(\mu_i(t),\mu^\infty_i)=0\quad\forall\, i\in\{1,\ldots,n\},
\end{align*}
for initial data with compact support. In contrast to convergence w.r.t. $\W_\prbb$ (cf. Corollary \ref{coro:posconv}), we do not obtain a specific rate of convergence.
\\

\emph{Comparison to the scalar case.~}
For the scalar equation $n=1$, an analogous confinement property of the solution to \eqref{eq:pdend} has been derived e.g. in \cite{carrillo2012,balague2014} in arbitrary spatial dimension $d$ under relatively general requirements on the regularity of the interaction potential $W$, assuming that $W$ is (at least in a weak sense) attractive at large distance from the origin. In this work, we investigate the qualitative behaviour of solutions to \eqref{eq:pdend} in one spatial dimension $d=1$ only, concentrating on possible complications arising from the coupling in the equations \eqref{eq:pdend}. We obtain here a multi-component condition guaranteeing that $W$ is confining, which is similar to the conditions from \cite{carrillo2012,balague2014}, but taking into account that not all self- and cross-interactions have to be attractive. Indeed, we require $W$ to behave like a uniformly convex potential in the sense of our new condition \eqref{eq:convcond}. A connatural property is assumed in the scalar case, see \cite{carrillo2012} in view of McCann's condition \cite{mccann1997}. At the present state, it is unclear if the approach from \cite{carrillo2012,balague2014} to prove uniform compactness of the support of the solution $\mu$ to \eqref{eq:pdend} in more than one spatial dimension can be carried over to the case of systems $n>1$. In contrast, using the approach with inverse distribution functions for $d=1$ allows us to give a more complete description of the qualitative behaviour of solutions: exclusion of blow-up in finite time can be shown with the method from \cite{burger2008} also in the case of systems $n>1$. Moreover, our result on the structure of the $\omega$-limit set of the dynamical system associated to \eqref{eq:pdend}, as a generalization of the scalar result from \cite{raoul2012}, also illustrates the use of energy methods, even if $\ent$ is not uniformly convex along generalized geodesics.
\subsection{Background}
System \eqref{eq:pdend} is a natural generalization of the scalar nonlocal evolution equation
\begin{align}
\label{eq:pde1d}
\partial_t \mu&=\dv[m\mu\nabla (W\ast\mu)],
\end{align}
to multiple components. For the corresponding interaction energy functional
\begin{align}
\label{eq:ent1d}
\ent(\mu)&=\frac12\int_\Rd (W\ast \mu)\dd \mu,
\end{align}
McCann provided in his seminal paper \cite{mccann1997} a criterion for $\lambda$-geodesic convexity with respect to the $L^2$-Wasserstein distance (see also \cite[Thm. 5.15(c)]{villani2003}). In a nutshell, if $W$ is $\kappa$-convex in the Euclidean sense on $\R^d$ for some $\kappa\in\R$, then $\ent$ is $\min(0,\kappa)$-geodesically convex on the space of probability measures endowed with the $L^2$-Wasserstein distance. On the subspace of those measures having fixed center of mass, $\ent$ is $\kappa$-geodesically convex (i.e. uniform convexity is retained in the metric framework). It was proven by Ambrosio, Gigli and Savaré in \cite{savare2008} that geodesic convexity essentially leads to existence and uniqueness of weak solutions for the associated gradient flow evolution equation and to contractivity of the associated flow map. Geodesic convexity also yields useful error estimates, e.g. for the semi-discrete JKO scheme \cite{jko1998,otto2001} often used to construct weak solutions to equations with gradient flow structure. An immediate consequence of $\lambda$-geodesic convexity of functionals -- for strictly positive $\lambda\in\R$ -- is existence and uniqueness of minimizers (for recent results without using convexity, see e.g. \cite{carrillo2014,carrillo2014gr}). For more general genuine systems of equations with gradient flow structure, geodesic convexity has been studied in \cite{zinsl2014}.

Model equations of the form \eqref{eq:pde1d} have arised in the study of population dynamics in many cases (e.g. \cite{bcc2012,blanchet2006,burger2007,carrillo2009,colombo2012,kang2009,kolokonikov2013,luckhaus2012,mogilner1999,topaz2004,topaz2006}) often derived as the infinite-particle limit of a individual-based model (e.g. \cite{bodnar2006,golse2003,morale2005}):
\begin{itemize}
\item In the parabolic-elliptic Keller-Segel model for chemotaxis in two spatial dimensions, the interaction potential is given by (the negative of) the Newtonian potential, i.e. $W(z)=\frac1{2\pi}\log(|z|)$, which is singular at $z=0$ and attractive.
\item Typical mathematical models of swarming processes include so-called \emph{attractive-repulsive} potentials of the form $W(z)=-C_a e^{-|z|/l_a}+C_re^{-|z|/l_r}$, a special case of which is the attractive \emph{Morse potential} $W(z)=-e^{-|z|}$. Also, Gaussian-type attractive-repulsive potentials $W(z)=-C_a e^{-|z|^2/l_a}+C_re^{-|z|^2/l_r}$ are conceivable.
\end{itemize}
Nonlocal interaction potentials also appear in several models of physical applications such as models for granular media (\cite{benedetto1997,cmv2003,cmv2006,li2004,toscani2000,carrillo2004}), opinion formation \cite{toscani2006} or interactions between particles (e.g. in crystals \cite{theil2006} or fluids \cite{zhang2009}) with a broad range of reasonable interaction potentials. One can e.g. consider
\begin{itemize}
\item convex and $C^1$-regular potentials, e.g. $W(z)=|z|^q$ for $q>1$.
\item non-convex, but regular potentials such as the \emph{double-well potential} $W(z)=|z|^4-|z|^2$,
\item non-convex and singular potentials, e.g. the \emph{Lennard-Jones} potential.
\end{itemize}
In the case of a radially symmetric potential $W(z)=w(|z|)$, the effect of the interaction potential is reflected by the sign of $w'$: If $w'$ is positive, the individuals of the population \emph{attract} each other, whereas in the case of negative $w'$ the dynamics are \emph{repulsive}. The force generated by the potential $W$ points towards or away from the origin for positive or negative $w'$, respectively. Radially symmetric potentials describe interactions only depending on the distance of the particles. With the sum of convolutions appearing in the flux on the r.h.s. of system \eqref{eq:pdend}, we take into account that every species generates a -- probably long-range -- force on every other species.

Naturally, aggregation processes modelled by nonlocal interaction potentials are often combined with diffusive processes yielding (nonlinear) drift-diffusion equations as mathematical models. The question of global existence of solutions to equations of these forms has been addressed in various publications. Using the theory of gradient flows, global existence of measure-valued solutions was proven in \cite{carrillo2011,carrillo2014non}, also for non-smooth potentials, in generalization of \cite{cmv2003,cmv2006}. Methods from optimal transportation theory were also useful for proving uniqueness, see e.g. \cite{carrillo2010,crippa2013}. Well-posedness in the measure-valued sense was also studied in \cite{canizo2011}, and in \cite{difranc2013} for a similar system for two species (see below). 

A second field of study is the analysis of the qualitative behaviour of solutions to equations like \eqref{eq:pde1d}, such as the speed of propagation, finite- and infinite-time blow-up of solutions and possible attractors, also with focus on self-similarity of solutions. It is not surprising that \eqref{eq:pde1d} exhibits blow-ups if the potential is sufficiently attractive. The aforementioned properties were investigated e.g. in \cite{balague2014,bertozzi2010,bertozzi2009,bertozzi2007,bertozzi2011,biler2009,carrillo2012,laurent2007,sun2012}. One specific object of study is equation \eqref{eq:pde1d} considered in one spatial dimension. For instance, in \cite{raoul2012,fellner2010,fellner2011} by Raoul and Fellner, rewriting in terms of inverse distribution functions allowed for the characterization of the long-time behaviour and the set of possible steady states of \eqref{eq:pde1d}. One-dimensional models with nonlinear diffusion have been studied e.g. by Burger and Di Francesco in \cite{burger2008}.

Genuine systems of the specific form \eqref{eq:pdend} have been studied only in the case of two species as a physical model for two-component mixtures \cite{giessen1999}, fluids \cite{zhang2009} or particle interactions \cite{giacomin2000}. From a more mathematical point of view, it was analysed by Di Francesco and Fagioli in \cite{difranc2013,dif2016}, where the results from \cite{carrillo2011,raoul2012,fellner2010,fellner2011} were generalized to the case of two components. There, also the case of non-symmetric interaction has been studied.


\section{Geodesic convexity and existence of solutions}\label{sec:cvx}
In this section, we derive a sufficient condition for $\lambda$-convexity along generalized geodesics of the interaction energy $\ent$ (cf. formula \eqref{eq:energy}) and conclude existence and uniqueness of solutions to \eqref{eq:pdend}. Throughout this part, the assumptions \textup{(W1)--(W5)} above shall be fulfilled. 

We begin with our definition of convexity along generalized geodesics, which is a straightforward generalization of the respective definition in the scalar case (see \cite[Ch. 9]{savare2008}) to our vector-valued setting:
\begin{definition}[$\lambda$-convexity along generalized geodesics]\label{definition:generalconv}
Given $\lambda\in\R$, we say that a functional \break $\auxil:\prbb\to\R_\infty$ is \emph{$\lambda$-convex along generalized geodesics} on $(\prbb,\W_\prbb)$, if for any triple $\mu^1,\mu^2,\mu^3\in\prbb$, there exists a $n$-vector-valued Borel measure $\bm{\mu}$ on $\R^d\times\R^d\times\R^d$ such that:\vspace{-\topsep}
\begin{itemize}
\item For all $j\in\{1,\ldots,n\}$ and all $k\in\{1,2,3\}$: ${\mu}^k_j=\pi^k{_\#}\bm{\mu}_j$.
\item For $k\in\{2,3\}$ and all $j\in\{1,\ldots,n\}$, the measure $\pi^{(1,k)}{_\#}\bm{\mu}_j$ is optimal in $\Gamma(\mu^1_j,\mu^k_j)$, i.e. it realizes the minimum in
\begin{align*}
\inf\left\{\int_{\R^d\times\R^d}|x^1-x^k|^2\dd \gamma_j(x^1,x^k)\bigg.\bigg|\gamma_j\in\Gamma(\mu^1_j,\mu^k_j)\right\}.
\end{align*}
\item Defining for $s\in[0,1]$ and all $j\in\{1,\ldots,n\}$ the \emph{generalized geodesic} $\mu_s$ connecting $\mu^2$ and $\mu^3$ (with \emph{base point} $\mu^1$) by
\begin{align*}
\mu_{s,j}:=\left[(1-s)\pi^2+s\pi^3\right]_\#\bm{\mu}_j,
\end{align*}
one has for all $s\in[0,1]$:
\begin{align}
\label{eq:gengeoconv}
\auxil(\mu_s)&\le (1-s)\auxil(\mu^2)+s\auxil(\mu^3)-\frac{\lambda}{2}s(1-s)\sum_{j=1}^n\frac1{m_j}\int_{\R^d\times\R^d\times\R^d}|x^3-x^2|^2\dd \bm{\mu}(x^1,x^2,x^3).
\end{align}
\end{itemize}
We call the energy \emph{uniformly} geodesically convex if it is $\lambda$-convex along generalized geodesics, for some $\lambda>0$. 
\end{definition}
Note that $\lambda$-convexity along generalized geodesics implies $\lambda$-convexity along geodesics in the usual sense, that is, for every pair $\mu^0,\mu^1\in\prbb$, there exists a constant-speed geodesic curve $(\mu^s)_{s\in [0,1]}$ in $\prbb$ connecting $\mu^0$ and $\mu^1$ for which
\begin{align}
\label{eq:geoconv}
\ent(\mu^s)&\le (1-s)\ent(\mu^0)+s\ent(\mu^1)-\frac12 s(1-s)\lambda \W_\prbb(\mu^0,\mu^1)^2,\qquad\forall~s\in [0,1].
\end{align}
For convexity along \emph{generalized} geodesics, an inequality of the form \eqref{eq:geoconv} is required for a wider class of curves joining $\mu^0$ and $\mu^1$.
The following sufficient criterion is useful in verifying convexity along generalized geodesics as it allows to consider \emph{absolutely continuous} measures and transport \emph{maps}.
\begin{thm}[Sufficient criterion for convexity along generalized geodesics {\cite[Prop.~9.2.10]{savare2008}}]\label{thm:suffgen}
Let \break $\auxil:\prbb\to\R_\infty$ be lower semicontinuous such that for all $\mu\in \prbb$, there exists a sequence $(\mu^k)_{k\in\N}$ on the subspace $\prbb^{\mathrm{ac}}$ of absolutely continuous measures in $\prbb$ with $\lim\limits_{k\to\infty}\W_\prbb(\mu^k,\mu)=0$ and $\lim\limits_{k\to\infty}\auxil(\mu^k)=\auxil(\mu)$. 

Assume moreover that for each $\mu\in\prbb^{\mathrm{ac}}$ and $t_1,\ldots,t_n,\tilde t_1,\ldots,\tilde t_n:\R^d\to\R^d$ such that $t_j-\tilde t_j\in L^2(\R;\dd \mu_j)$ for all $j\in \{1,\ldots,n\}$, the following estimate holds along the interpolating curve $(\mu^s)_{s\in[0,1]}$ defined as \break $\mu^s_j:=\left[(1-s)\tilde t_j+s t_j\right]{_\#}\mu_j~$ for all $s\in[0,1]$ and $j\in\{1,\ldots,n\}$:
\begin{align}\label{eq:suffgeoconv}
\auxil(\mu^s)\le (1-s)\auxil(\mu^0)+s\auxil(\mu^1)-\frac{\lambda}{2}s(1-s)\sum_{j=1}^n\frac1{m_j}\int_\Rd |t_j(x)-\tilde  t_j(x)|^2\dd \mu(x)\quad\text{for all }s\in[0,1].
\end{align}
Then, $\auxil$ is $\lambda$-convex along generalized geodesics on $(\prbb,\W_\prbb)$.
\end{thm}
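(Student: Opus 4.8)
The plan is to derive the statement from the hypothesis in two stages: first for an absolutely continuous base point, where Brenier's theorem supplies optimal transport \emph{maps}, and then for a general base point by a limiting argument exploiting the density hypothesis.

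\emph{Stage 1: absolutely continuous base point.} Fix a triple $\mu^1,\mu^2,\mu^3\in\prbb$ with $\mu^1\in\prbb^{\mathrm{ac}}$; we may assume $\auxil(\mu^2),\auxil(\mu^3)<\infty$, since otherwise the right-hand side of \eqref{eq:gengeoconv} is $+\infty$ (the transport cost there is always bounded by $2\mom{\mu^2_j}+2\mom{\mu^3_j}<\infty$) and there is nothing to prove. For each $j$, Brenier's theorem (\cite{villani2003,savare2008}) gives optimal maps $t^2_j,t^3_j$ pushing $\mu^1_j$ onto $\mu^2_j$ and $\mu^3_j$, with $t^2_j,t^3_j\in L^2(\mu^1_j)$, hence $t^3_j-t^2_j\in L^2(\mu^1_j)$. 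Set $\bm\mu_j:=(\id,t^2_j,t^3_j){_\#}\mu^1_j$. Then $\pi^k{_\#}\bm\mu_j=\mu^k_j$ for $k\in\{1,2,3\}$, while $\pi^{(1,k)}{_\#}\bm\mu_j=(\id,t^k_j){_\#}\mu^1_j$ is optimal in $\Gamma(\mu^1_j,\mu^k_j)$ for $k\in\{2,3\}$ because $t^k_j$ is a Brenier map. Apply the hypothesis \eqref{eq:suffgeoconv} with $\mu=\mu^1$, $\tilde t_j=t^2_j$, $t_j=t^3_j$: its interpolating curve has endpoints $\mu^2,\mu^3$ and equals the generalized geodesic $\mu_{s,j}=[(1-s)\pi^2+s\pi^3]{_\#}\bm\mu_j$, and $\int|t_j-\tilde t_j|^2\dd\mu_j=\int|x^3-x^2|^2\dd\bm\mu_j$, so \eqref{eq:suffgeoconv} becomes precisely \eqref{eq:gengeoconv}. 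Thus $\bm\mu$ witnesses generalized-geodesic $\lambda$-convexity for this triple.

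\emph{Stage 2: general base point.} Given any $\mu^1\in\prbb$, choose by the density hypothesis $\mu^{1,k}\in\prbb^{\mathrm{ac}}$ with $\W_\prbb(\mu^{1,k},\mu^1)\to0$. Stage~1 applied to the triples $(\mu^{1,k},\mu^2,\mu^3)$ produces $3$-plans $\bm\mu^k_j=(\id,t^{2,k}_j,t^{3,k}_j){_\#}\mu^{1,k}_j$ satisfying \eqref{eq:gengeoconv}, with generalized geodesics $\mu^k_{s,j}=[(1-s)t^{2,k}_j+st^{3,k}_j]{_\#}\mu^{1,k}_j$. For each $j$ fix an optimal plan $\sigma^k_j\in\Gamma(\mu^1_j,\mu^{1,k}_j)$ and glue it with $\bm\mu^k_j$ along their common marginal $\mu^{1,k}_j$ (gluing lemma, \cite{savare2008}); projecting the resulting measure on $(\R^d)^4$ onto the three coordinates carrying $\mu^1_j,\mu^2_j,\mu^3_j$ gives a $3$-plan $\tilde{\bm\mu}^k_j$ whose first marginal is \emph{exactly} $\mu^1_j$, whose second and third marginals are $\mu^2_j,\mu^3_j$, and whose associated generalized geodesic still equals $\mu^k_{s,j}$ (the gluing touches only the base-point coordinate). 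The marginals of $\tilde{\bm\mu}^k_j$ being fixed, hence tight, we pass to a subsequence with $\tilde{\bm\mu}^k_j\rightharpoonup\bm\mu_j$ narrowly for every $j$; the limit has marginals $\mu^1_j,\mu^2_j,\mu^3_j$. Since $\int|x^1-x^2|^2\dd\bm\mu^k_j=\W_2^2(\mu^{1,k}_j,\mu^2_j)$ by optimality of $t^{2,k}_j$ and $\W_\prbb(\mu^{1,k},\mu^1)\to0$, a triangle inequality bounds $\limsup_k\int|x^1-x^2|^2\dd\tilde{\bm\mu}^k_j$ by $\W_2^2(\mu^1_j,\mu^2_j)$; combined with narrow lower semicontinuity of the quadratic cost this forces $\pi^{(1,2)}{_\#}\bm\mu_j$ to be optimal in $\Gamma(\mu^1_j,\mu^2_j)$, and likewise $\pi^{(1,3)}{_\#}\bm\mu_j$. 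Because the $2$nd and $3$rd marginals of $\tilde{\bm\mu}^k_j$ are the fixed finite-second-moment measures $\mu^2_j,\mu^3_j$, the family $|x^3-x^2|^2\,\tilde{\bm\mu}^k_j$ is uniformly integrable, so $\sum_j\tfrac1{m_j}\int|x^3-x^2|^2\dd\tilde{\bm\mu}^k_j$ converges and $\mu^k_{s,j}\rightharpoonup\mu_{s,j}:=[(1-s)\pi^2+s\pi^3]{_\#}\bm\mu_j$ for each $s$. Taking $\liminf_k$ in \eqref{eq:gengeoconv} for $\bm\mu^k$ — lower semicontinuity of $\auxil$ on the left, convergence of the cost on the right — yields \eqref{eq:gengeoconv} for $\bm\mu$, which is the required generalized geodesic; hence $\auxil$ is $\lambda$-convex along generalized geodesics in the sense of Definition~\ref{definition:generalconv}.

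\emph{Main obstacle.} The whole difficulty is in Stage~2: approximating the base point makes the first marginals of the natural $3$-plans drift, so they must be reattached to $\mu^1_j$ by gluing; but gluing generically destroys the optimality of the $(1,2)$- and $(1,3)$-marginals at finite $k$, and recovering it in the limit hinges on the quantitative decay $\W_\prbb(\mu^{1,k},\mu^1)\to0$ together with lower semicontinuity of the quadratic cost. Checking that this regluing leaves the interpolating curves — hence the left-hand side of \eqref{eq:gengeoconv} — untouched, and securing enough uniform integrability to pass the cost term on the right, are the remaining technical points.
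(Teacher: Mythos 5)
The paper does not prove this theorem at all: it is quoted verbatim from \cite[Prop.~9.2.10]{savare2008}, so there is no in-paper argument to compare against. Your reconstruction is the standard AGS argument (Brenier maps for an absolutely continuous base point, then approximation of the base point), and I find it correct. Stage 1 is exactly right: with $\tilde t_j=t^2_j$, $t_j=t^3_j$ the hypothesis \eqref{eq:suffgeoconv} literally \emph{is} \eqref{eq:gengeoconv} for the plan $(\id,t^2_j,t^3_j){_\#}\mu^1_j$. Stage 2 is also sound; the two points you flag as the main obstacles are handled correctly (the regluing only alters the first coordinate, so the $(2,3)$-marginal, hence the generalized geodesic and the cost term, are untouched; and the $\limsup$-plus-lower-semicontinuity argument does recover optimality of the $(1,2)$- and $(1,3)$-marginals in the limit). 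Two small points deserve a line each: (i) the lower semicontinuity of $\auxil$ is with respect to $\W_\prbb$, so narrow convergence $\mu^k_{s,j}\rightharpoonup\mu_{s,j}$ must be upgraded to $\W_\prbb$-convergence; this follows from the same uniform integrability you invoke (the second moments $\int|(1-s)x^2+sx^3|^2\dd\tilde{\bm\mu}^k_j$ converge because the $x^2$- and $x^3$-marginals are the fixed measures $\mu^2_j,\mu^3_j$), but it should be said, since $\W_\prbb$-lower semicontinuity does not follow from narrow convergence alone; (ii) you never use the energy-convergence part of the density hypothesis ($\auxil(\mu^k)\to\auxil(\mu)$), which is harmless here since $\auxil(\mu^1)$ does not appear in \eqref{eq:gengeoconv} and only the base point is approximated — worth a remark so the reader does not think a hypothesis was silently dropped.
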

\subsection{Geodesic convexity of the multi-component interaction energy}\label{subsec:cvx}
We first prove some basic properties of the interaction energy $\ent$.
\begin{lemma}[Proper domain and lower semicontinuity]\label{lem:basic}
The following statements hold:\vspace{-\topsep}
\begin{enumerate}[(a)]
\item For all $\mu\in\prbb$, one has $|\ent(\mu)|<\infty$.
\item $\ent$ is continuous on the metric space $(\prbb,\W_\prbb)$.
\item Let $\rho\in C^\infty_c(\Rd)$ be defined via 
\begin{align*}
\rho(x):=Z\exp\left(\frac1{|x|^2-1}\right)\eins{\ball_1(0)}(x),
\end{align*}
where $Z>0$ is such that $\|\rho\|_{L^1}=1$, and put $\rho_\eps(x):=\eps^{-d}\rho\left(\frac{x}{\eps}\right)$ for $\eps>0$. Then, for each $\mu\in\prbb$, the sequence $(\mu^k)_{k\in\N}$ with $\mu^k_j:=\rho_{\frac1{k}}\ast\mu_j$ for $k\in\N$, $j\in\{1,\ldots,n\}$, belongs to $\prbb^{\mathrm{ac}}$ and $\lim\limits_{k\to\infty}\W_\prbb(\mu^k,\mu)=0$.
\end{enumerate}
\end{lemma}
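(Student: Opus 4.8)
The plan is to establish the three parts in order, each reducing to the scalar case once the sums over species are handled.

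\textbf{Part (a).} I would bound $|\ent(\mu)|$ directly using growth assumption (W4). Since $|W_{ij}(x-y)|\le \overline{W}_{ij}(1+|x-y|^2)\le \overline{W}_{ij}(1+2|x|^2+2|y|^2)$, integrating against $\mu_i\otimes\mu_j$ gives
\begin{align*}
\left|\int_\Rd\int_\Rd W_{ij}(x-y)\dd\mu_i(x)\dd\mu_j(y)\right|\le \overline{W}_{ij}\left(p_ip_j+2p_j\mom{\mu_i}+2p_i\mom{\mu_j}\right),
\end{align*}
which is finite because $\mu\in\prbb$ forces all second moments finite and all masses equal to the fixed $p_j$. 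Summing over $i,j$ yields $|\ent(\mu)|<\infty$.

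\textbf{Part (b).} For continuity I would take $\mu^k\to\mu$ in $\W_\prbb$, hence $\mu^k_j\to\mu_j$ in the scalar Wasserstein-type distance for each $j$, which implies convergence of second moments $\mom{\mu^k_j}\to\mom{\mu_j}$ and narrow (weak) convergence. Then $\mu^k_i\otimes\mu^k_j\to\mu_i\otimes\mu_j$ narrowly. Since $W_{ij}$ is continuous (W2) with at-most-quadratic growth (W4) and the second moments converge, one gets $\int\int W_{ij}\dd\mu^k_i\dd\mu^k_j\to\int\int W_{ij}\dd\mu_i\dd\mu_j$ by the standard argument: split $W_{ij}$ into a bounded-continuous part times a cutoff and a tail controlled uniformly by $\mom{\mu^k_i}+\mom{\mu^k_j}$, using uniform integrability coming from the convergence of second moments. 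Summing over the finitely many pairs $(i,j)$ gives $\ent(\mu^k)\to\ent(\mu)$.

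\textbf{Part (c).} Here I would verify three things for $\mu^k_j:=\rho_{1/k}\ast\mu_j$. First, $\mu^k_j$ is absolutely continuous with smooth density (convolution with $\rho_{1/k}\in C^\infty_c$), and it has the same mass $p_j$; the center-of-mass constraint $\sum_j \frac1{m_j}\int x\dd\mu^k_j(x)=\sum_j\frac1{m_j}\int x\dd\mu_j(x)=E$ is preserved because $\rho$ is even (a consequence of $\rho$ being radial), so convolution does not shift barycenters. Second, $\mom{\mu^k_j}<\infty$: since $\rho_{1/k}$ is supported in $\ball_{1/k}(0)$, $\int|x|^2\dd\mu^k_j\le \int(|y|+1/k)^2\dd\mu_j(y)<\infty$. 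Hence $\mu^k\in\prbb^{\mathrm{ac}}$. Third, $\W_\prbb(\mu^k,\mu)\to 0$: the pushforward of $\mu_j$ under the map $x\mapsto(x,x)$ coupled with a shift by the convolution variable gives, for each $j$, the transport plan $(x,x+z)_\#(\mu_j\otimes\rho_{1/k})$, whose cost is $\int_{\ball_{1/k}(0)}|z|^2\rho_{1/k}(z)\dd z\le 1/k^2$, so $\W_\prbb(\mu^k,\mu)^2\le \sum_j \frac1{m_j}\cdot\frac1{k^2}\to 0$. This gives the approximation claim, and I would note it is exactly the hypothesis needed to invoke Theorem \ref{thm:suffgen} later (together with part (b) for the energy convergence along this sequence).

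\textbf{Main obstacle.} The only genuinely technical point is the uniform-integrability/tail argument in part (b) ensuring $\int\int W_{ij}\dd\mu^k_i\dd\mu^k_j\to\int\int W_{ij}\dd\mu_i\dd\mu_j$ despite $W_{ij}$ being merely continuous with quadratic growth rather than bounded; everything else is a bookkeeping exercise once one reduces to each species separately. I expect the write-up to handle this by the classical truncation lemma for narrow convergence with convergent $p$-th moments (here $p=2$), applied on the product space $\Rd\times\Rd$.
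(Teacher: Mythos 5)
Your parts (a) and (b) follow the paper's proof essentially verbatim: the quadratic bound from (W4) gives (a), and (b) is the standard truncation/uniform-integrability argument for narrow convergence with convergent second moments that the paper delegates to \cite[Lemma 5.1.7]{savare2008}. Where you genuinely diverge is in part (c), in the proof that $\W_\prbb(\mu^k,\mu)\to 0$. The paper verifies the metric convergence indirectly, via the characterization of Wasserstein convergence as narrow convergence plus convergence of second moments: it computes $\mom{\mu_j^k}=\mom{\mu_j}+k^{-2}\int|x|^2\rho(x)\dd x$ explicitly and then proves narrow convergence by testing against bounded continuous $f$ and using $\rho_{1/k}\ast f\to f$ pointwise with dominated convergence. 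You instead exhibit the explicit coupling $\gamma_j^k:=\bigl((x,z)\mapsto(x,x+z)\bigr)_\#(\mu_j\otimes\rho_{1/k})$, whose marginals are $\mu_j$ and $\rho_{1/k}\ast\mu_j$ and whose cost is $p_j\int|z|^2\rho_{1/k}(z)\dd z\le p_j/k^2$ (note the mass factor $p_j$, which you dropped but which is harmless), giving $\W_\prbb(\mu^k,\mu)\le C/k$ directly. Your route is shorter, avoids the narrow-convergence machinery entirely, and yields a quantitative rate; the paper's route has the side benefit of separately recording the exact second-moment identity and the invariance computations that are reused in spirit elsewhere. Both arguments are correct, and both correctly use the evenness of $\rho$ to preserve the weighted center of mass $E$, which is the one constraint of $\prbb$ that is not automatic.
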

\begin{proof}
The key observation for parts (a) and (b) is the sub-quadratic growth of $\ent$ as a consequence of condition \textup{(W4)}: there exists a constant $C>0$ such that for all $\mu\in\prbb$, one has
\begin{align}\label{eq:entquadr}
|\ent(\mu)|\le C\sum_{j=1}^n\int_\Rd (1+|x|^2)\dd \mu_j(x).
\end{align}
Clearly, (a) follows. If $(\mu^k)_{k\in\N}$ is a sequence converging to $\mu$ in $(\prbb,\W_\prbb)$, in particular their second moments converge componentwise. Hence, the integrand on the r.h.s. in \eqref{eq:entquadr} is uniformly integrable which yields (b) as in \cite[Lemma 5.1.7]{savare2008}, using the continuity of $W$. For part (c), we observe for $j\in\{1,\ldots,n\}$ that $\mu^k_j$ is an absolutely continuous measure on $\R^d$ with Lebesgue density
\begin{align*}
x\mapsto \int_\Rd \rho_{\frac1{k}}(x-y)\dd\mu_j(y).
\end{align*}
Clearly, $\mu^k_j(\Rd)=p_j$. Moreover, the center of mass $E$ is unchanged by convolution with $\rho_{\frac1{k}}$ since for all $j\in\{1,\ldots,n\}$ and all $k\in\N$:
\begin{align*}
\int_\Rd x\dd\mu^k_j(x)&=\int_\Rd x\dd \mu_j(x).
\end{align*}
Indeed, by transformation and Fubini's theorem,
\begin{align*}
\int_\Rd x\dd\mu^k_j(x)&=\int_\Rd\int_\Rd (z+y)\rho_{\frac1{k}}(z)\dd\mu_j(y)\dd z=\int_\Rd y\dd\mu_j(y)+p_j\int_\Rd z\rho_{\frac1{k}}(z)\dd z.
\end{align*}
The last integral above vanishes since $\rho$ is an even function. Along the same lines, one proves convergence of the second moments:
\begin{align*}
\mom{\mu_j^k}&=\int_\Rd\int_\Rd |z+y|^2\rho_{\frac1{k}}(z)\dd\mu_j(y)\dd z\\&=\mom{\mu_j}+p_j\int_\Rd |z|^2\rho_{\frac1{k}}(z)\dd z+2\left(\int_\Rd z\rho_{\frac1{k}}(z)\dd z\right)^\tT\left(\int_\Rd y\dd \mu_j(y)\right).
\end{align*}
Since the last term vanishes again, we see
\begin{align*}
\mom{\mu_j^k}&=\mom{\mu_j}+\frac1{k^2}\int_\Rd |x|^2\rho(x)\dd x\stackrel{k\to\infty}{\longrightarrow}\mom{\mu_j}.
\end{align*}
It remains to prove narrow convergence of $\mu^k_j$ to $\mu_j$. Fix $f:\R^d\to\R$ continuous and bounded. Using Fubini's theorem again, we get, since $\rho$ is even,
\begin{align*}
\int_\Rd f\dd \mu_j^k-\int_\Rd f\dd\mu_j&=\int_\Rd (\rho_{\frac1{k}}\ast f-f)\dd\mu_j.
\end{align*}
Since $f$ is continuous, $\rho_{\frac1{k}}\ast f$ converges to $f$ pointwise on $\R^d$ (see, for instance, \cite[App. C]{evans2010}). Clearly, $\rho_{\frac1{k}}\ast f$ is $k$-uniformly bounded. The dominated convergence theorem now yields
\begin{align*}
\int_\Rd (\rho_{\frac1{k}}\ast f-f)\dd\mu_j\stackrel{k\to\infty}{\longrightarrow}0,
\end{align*}
proving the claim.
\end{proof}
\begin{lemma}[Growth control on the gradient]\label{lem:grad}
There exists a matrix $\overline C\in\Matn$ such that for all $z\in\Rd$ and all $i,j\in\{1,\ldots,n\}$:
\begin{align}
\label{eq:grad}
|\nabla W_{ij}(z)|&\le \overline{C}_{ij}(|z|+1).
\end{align}
\end{lemma}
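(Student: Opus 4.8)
The plan is to combine the $C^1$-regularity of $W_{ij}$ from (W2) with the $\kappa_{ij}$-semiconvexity from (W5), using the latter to control the growth of $\nabla W_{ij}$ at infinity and the former to control it on a compact set. First I would fix $i,j$ and write $V:=W_{ij}$ and $\kappa:=\kappa_{ij}$, so that $\tilde V(z):=V(z)-\tfrac12\kappa|z|^2$ is convex on $\Rd$ and $\tilde V\in C^1(\Rd)$ with $\nabla\tilde V(z)=\nabla V(z)-\kappa z$. For a convex $C^1$ function on all of $\Rd$ there is a classical quantitative gradient bound: by the subgradient (monotonicity) inequality $\langle \nabla\tilde V(z)-\nabla\tilde V(0),z-0\rangle\ge 0$, and more usefully, for the point $w:=z+\nabla\tilde V(z)/|\nabla\tilde V(z)|$ one gets $\tilde V(w)\ge \tilde V(z)+|\nabla\tilde V(z)|$; combined with the growth bound (W4) on $V$ (hence a quadratic bound on $\tilde V$ and on $\tilde V(0)$), this forces $|\nabla\tilde V(z)|\le c(1+|z|)$ for a constant $c$ depending only on $\overline W_{ij}$ and $|\kappa_{ij}|$.

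The key steps in order would then be: (1) From (W4), derive $|\tilde V(z)|\le a_{ij}(1+|z|^2)$ with $a_{ij}$ depending on $\overline W_{ij}$ and $|\kappa_{ij}|$. (2) Apply the convexity inequality at $z$ against the displaced point $w$ a unit step in the direction of $\nabla\tilde V(z)$ (if $\nabla\tilde V(z)=0$ there is nothing to prove), giving $|\nabla\tilde V(z)|\le \tilde V(w)-\tilde V(z)\le a_{ij}(1+(|z|+1)^2)+a_{ij}(1+|z|^2)\le b_{ij}(1+|z|)^2$. This only yields a quadratic bound, which is too weak, so (2$'$) instead apply convexity on the segment: for any $h>0$, $\tilde V(z+h\,e)\ge \tilde V(z)+h\langle\nabla\tilde V(z),e\rangle$ with $e:=\nabla\tilde V(z)/|\nabla\tilde V(z)|$; optimizing is not needed — but a cleaner route is to use that convexity plus the quadratic \emph{upper} bound implies a linear gradient bound via the standard lemma that a convex function bounded above by a quadratic has gradient bounded by an affine function. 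Concretely, for any $r>0$, $\langle\nabla\tilde V(z),e\rangle\le \frac{\tilde V(z+re)-\tilde V(z)}{r}\le \frac{a_{ij}(1+(|z|+r)^2)+a_{ij}(1+|z|^2)}{r}$, and choosing $r:=1+|z|$ gives $\langle\nabla\tilde V(z),e\rangle\le \tilde c_{ij}(1+|z|)$, i.e. $|\nabla\tilde V(z)|\le \tilde c_{ij}(1+|z|)$. (3) Undo the substitution: $|\nabla W_{ij}(z)|\le |\nabla\tilde V(z)|+|\kappa_{ij}||z|\le (\tilde c_{ij}+|\kappa_{ij}|)(1+|z|)$, so $\overline C_{ij}:=\tilde c_{ij}+|\kappa_{ij}|$ works. (4) Take the matrix $\overline C$ with these entries; it lies in $\Matn$ and \eqref{eq:grad} holds for all $z\in\Rd$ and all $i,j$.

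The main obstacle is getting a \emph{linear} rather than merely quadratic bound on $|\nabla\tilde V|$; the naive "one-unit-step" argument only gives quadratic growth because the upper bound on $\tilde V$ is quadratic. The resolution is to let the step size $r$ scale like $1+|z|$ in the difference-quotient bound for the directional derivative — since $\tilde V$ is convex, the difference quotient $(\tilde V(z+re)-\tilde V(z))/r$ is nondecreasing in $r$, hence it dominates $\langle\nabla\tilde V(z),e\rangle$ for \emph{every} $r>0$, and the $r\sim|z|$ choice balances the $|z|^2/r$ term against the $r$ term to produce the linear-in-$|z|$ estimate. Everything else is routine: (W2) guarantees $\nabla W_{ij}$ is defined everywhere and continuous (so no issues with the pointwise statement), and (W3) is not even needed here, though it would let one replace the bound by one in $|z|$ with no $+1$ away from the origin.
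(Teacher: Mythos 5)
Your proof is correct and rests on the same two ingredients as the paper's: the semiconvexity inequality from (W5) tested at a point displaced in the gradient direction, combined with the quadratic growth bound (W4). The only difference is mechanical — the paper takes the displacement $y=x+\alpha\nabla W_{ij}(x)$ with a fixed small $\alpha$ and absorbs the resulting quadratic term in $|\nabla W_{ij}(x)|$ via Young's inequality, whereas you normalize to a unit direction and let the step length $r=1+|z|$ do the balancing; both yield the linear bound.
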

\begin{proof}
We give a short proof for the sake of completeness. From (W2) and (W5), it easily follows for all $x,y\in\Rd$ that
\begin{align*}
W_{ij}(y)-W_{ij}(x)-\frac{\kappa_{ij}}{2}|y-x|^2&\ge \nabla W_{ij}(x)^\tT(y-x).
\end{align*}
Putting 
\begin{align*}
\alpha:=\begin{cases}|4\bar W_{ij}-\kappa_{ij}|^{-1}&\text{if }4\bar W_{ij}>\kappa_{ij},\\ 1&\text{otherwise,}\end{cases}\quad\text{and}\quad y:=x+\alpha \nabla W_{ij}(x),
\end{align*}
we get, using (W4) and Young's inequality:
\begin{align*}
\alpha|\nabla W_{ij}(x)|^2&\le \overline{W}_{ij}(2+3|x|^2)+\frac12 \alpha^2 (4\bar W_{ij}-\kappa_{ij})|\nabla W(x)|^2.
\end{align*}
Consequently, in both cases, we have
\begin{align*}
|\nabla W_{ij}(x)|&\le \left(2\alpha^{-1}\overline{W}_{ij}(2+3|x|^2)\right)^{1/2}
\end{align*}
which implies an estimate of the form \eqref{eq:grad} via the elementary estimate $\sqrt{a+b}\le\sqrt{a}+\sqrt{b}$ holding for $a,b\ge 0$.
\end{proof}
\begin{remark}[Invariants]\label{rem:inva}
Along the flow of system \eqref{eq:pdend}, the set $\prbb$ is positively invariant. We give a formal indication of this fact: Let an initial datum $\mu^0\in\prbb$ be given. Since \eqref{eq:pdend} is in divergence form, we immediately obtain the conservation of mass:
\begin{align*}
\fder{t}\int_\Rd \dd \mu_i(t,x)&=0.
\end{align*}
Furthermore, by formal integration by parts, one has
\begin{align*}
\fder{t}\sum_{i=1}^n\mom{\mu_i(t)}&=-\sum_{i=1}^n 2m_i\int_\Rd x^\tT\left(\sum_{j=1}^n \nabla W_{ij}\ast\mu_j(t)\right)(x)\dd \mu_i(t,x),
\end{align*}
from which it is possible to derive using the Young and Jensen inequalities and Lemma \ref{lem:grad} the estimate
\begin{align*}
\fder{t}\sum_{i=1}^n\mom{\mu_i(t)}&\le A\sum_{i=1}^n\mom{\mu_i(t)}+B,
\end{align*}
for suitable $A,B\in\R$. Gronwall's lemma now yields finiteness of second moments at a fixed time $t\ge 0$. Finally,
\begin{align*}
\fder{t}\sum_{i=1}^n \frac1{m_i}\int_\Rd x\dd \mu_i(t,x)=-\sum_{i=1}^n\sum_{j=1}^n\int_\Rd\int_\Rd\nabla W_{ij}(x-y)\dd \mu_j(t,y)\dd\mu_i(t,x).
\end{align*}
Using assumptions \textup{(W1)} and \textup{(W3)} in combination with Fubini's theorem, we observe that the r.h.s. above is in fact equal to 0.
\end{remark}
\begin{definition}[Irreducible systems]\label{def:irred}
We call a system of the form \eqref{eq:pdend} \emph{irreducible}, if the graph \break $G=(V_G,E_G)$ with nodes $V_G=\{1,\ldots,n\}$ and edges $E_G=\{(i,j)\in V_G\times V_G:\,\nabla W_{ij}\not\equiv 0 \text{ on }\Rd\}$ is connected. That is, irreducible systems cannot be split up into independent subsystems.
\end{definition}

The main result of this section is concerned with the geodesic convexity of the interaction energy $\ent$:
\begin{thm}[Criterion for geodesic convexity]\label{thm:geoconv}
Let $n>1$ and let \eqref{eq:pdend} be irreducible. Define for \break $i\in \{1,\ldots,n\}$ the quantity $\eta_i:=\min\limits_{j\neq i}\kappa_{ij}m_j\in\R$. Then, $\ent$ is $\lambda$-convex along generalized geodesics on $\prbb$ w.r.t. $\W_\prbb$ for all $\lambda\le \lambda_0$ with
\begin{align}
\label{eq:convcond0}
\lambda_0&:=\min_{i\in\{1,\ldots,n\}}\left[p_i\min(0,m_i\kappa_{ii}-\eta_i)+\frac12\sum_{j=1}^n p_j\left(\eta_j+\eta_i\frac{m_i}{m_j}\right)\right].
\end{align}
\end{thm}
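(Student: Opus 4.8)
The plan is to apply the sufficient criterion of Theorem~\ref{thm:suffgen}. By Lemma~\ref{lem:basic}(b),(c), the functional $\ent$ is continuous on $(\prbb,\W_\prbb)$ and every $\mu\in\prbb$ is approximated in both $\W_\prbb$ and energy by absolutely continuous measures, so it remains only to verify the interpolation inequality \eqref{eq:suffgeoconv}. Fix $\mu\in\prbb^{\mathrm{ac}}$ and maps $t_j,\tilde t_j$, set $\tau_j^s:=(1-s)\tilde t_j+st_j$, and let $g(s):=\ent(\mu^s)$ with $\mu^s_j=(\tau_j^s)_\#\mu_j$. Writing out $\ent(\mu^s)$ using \eqref{eq:energy} gives
\begin{align*}
g(s)&=\frac12\sum_{i,j}\int_\Rd\int_\Rd W_{ij}\big(\tau_i^s(x)-\tau_j^s(y)\big)\dd\mu_i(x)\dd\mu_j(y).
\end{align*}
The aim is to show $g(s)\le(1-s)g(0)+sg(1)-\tfrac\lambda2 s(1-s)\sum_j\tfrac1{m_j}\int|t_j-\tilde t_j|^2\dd\mu$ for $\lambda\le\lambda_0$; equivalently, that $g(s)+\tfrac\lambda2 s(1-s)Q$ is convex in $s$ on $[0,1]$, where $Q:=\sum_j\tfrac1{m_j}\int|t_j-\tilde t_j|^2\dd\mu_j$.

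For each fixed pair $(x,y)$ and indices $(i,j)$, the argument $\tau_i^s(x)-\tau_j^s(y)$ is affine in $s$, with slope $v_{ij}(x,y):=(t_i(x)-\tilde t_i(x))-(t_j(y)-\tilde t_j(y))$. By (W5), $s\mapsto W_{ij}(\tau_i^s(x)-\tau_j^s(y))-\tfrac{\kappa_{ij}}2|\tau_i^s(x)-\tau_j^s(y)|^2$ is convex in $s$; hence the second (distributional) derivative of $s\mapsto W_{ij}(\tau_i^s(x)-\tau_j^s(y))$ is bounded below by $\kappa_{ij}|v_{ij}(x,y)|^2$. Summing and integrating, $g''(s)\ge\tfrac12\sum_{i,j}\kappa_{ij}\int\int|v_{ij}(x,y)|^2\dd\mu_i\dd\mu_j$ in the distributional sense. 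Expanding $|v_{ij}|^2=|a_i(x)|^2-2a_i(x)\cdot a_j(y)+|a_j(y)|^2$ with $a_k:=t_k-\tilde t_k$, and using $\int a_k\dd\mu_k=0$ (this is where the fixed center of mass built into $\prbb$ is used: displacements in $\prbb^{\mathrm{ac}}$ that stay in $\prbb$ must preserve the weighted barycenter, forcing the cross terms to vanish, or alternatively one only needs the cross-term bound), the cross terms drop and one is left with
\begin{align*}
g''(s)&\ge\frac12\sum_{i,j}\kappa_{ij}\Big(p_j\!\int|a_i|^2\dd\mu_i+p_i\!\int|a_j|^2\dd\mu_j\Big)=\sum_{i}\Big(\sum_{j}\kappa_{ij}p_j\Big)\!\int|a_i|^2\dd\mu_i.
\end{align*}
The task is then to bound $\sum_j\kappa_{ij}p_j$ from below in terms of the quantities $\eta_i$ and the coefficient $\tfrac1{m_i}$ attached to $\int|a_i|^2\dd\mu_i$ inside $Q$, so as to extract the factor $\lambda_0$. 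Splitting off the diagonal term $\kappa_{ii}p_i$ and bounding each off-diagonal $\kappa_{ij}p_j=\tfrac{\eta_i}{\eta_i}\kappa_{ij}m_j\cdot\tfrac{p_j}{m_j}\ge$ (the appropriate combination of $\eta_i,\eta_j$) is the bookkeeping that produces precisely \eqref{eq:convcond0}; the term $p_i\min(0,m_i\kappa_{ii}-\eta_i)$ appears because one replaces $\kappa_{ii}p_i$ by $\eta_i p_i$ at a cost controlled by the negative part of $m_i\kappa_{ii}-\eta_i$, and the symmetrization over the pair $(i,j)$ produces the $\tfrac12\sum_j p_j(\eta_j+\eta_i\tfrac{m_i}{m_j})$ contribution after dividing through by $m_i$ to match the weights in $Q$.

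The main obstacle is this last combinatorial/algebraic step: converting the quadratic form lower bound $g''(s)\ge\sum_i(\sum_j\kappa_{ij}p_j)\int|a_i|^2\dd\mu_i$ into the clean scalar estimate $g''(s)\ge-\lambda_0\,Q$, since the weights $\tfrac1{m_j}$ in $Q$ do not directly match the coefficients $\sum_j\kappa_{ij}p_j$, and irreducibility is needed to ensure the estimate is not vacuous (if the system splits, the relevant $\eta_i$ could be $-\infty$ for an isolated component, so the connectedness from Definition~\ref{def:irred} guarantees every node has at least one neighbour with $\nabla W_{ij}\not\equiv0$, hence $\eta_i\in\R$). Once $g''(s)\ge-\lambda_0 Q$ is established in the distributional sense, convexity of $s\mapsto g(s)+\tfrac{\lambda}2 s(1-s)Q$ for $\lambda\le\lambda_0$ is immediate, and evaluating the resulting convexity inequality at the endpoints $s=0,1$ yields \eqref{eq:suffgeoconv}. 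Theorem~\ref{thm:suffgen} then gives the claim.
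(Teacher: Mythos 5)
Your overall strategy (reduce to Theorem \ref{thm:suffgen} via Lemma \ref{lem:basic}, use (W5) along the affine interpolation to get a distributional lower bound on $g''$, then massage the resulting quadratic form) is the same as the paper's. However, there is a genuine error at the decisive step. You claim that the cross terms in the expansion of $|v_{ij}|^2$ drop because $\int a_k\dd\mu_k=0$ for each $k$. The constraint built into $\prbb$ is only that the \emph{joint weighted} barycenter is fixed, i.e. $\sum_{k}\frac1{m_k}\int a_k\dd\mu_k=0$; the individual integrals $\int a_k\dd\mu_k$ need not vanish (e.g.\ for $n=2$ one component may shift right and the other left in a balanced way). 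Consequently your intermediate bound $g''(s)\ge\sum_i\bigl(\sum_j\kappa_{ij}p_j\bigr)\int|a_i|^2\dd\mu_i$ is false in general; note that if it were true it would yield the modulus $\min_i m_i\sum_j\kappa_{ij}p_j$, which by Proposition \ref{rem:necconv} is \emph{larger} than $\lambda_0$, so you would be proving a strictly stronger (and incorrect) statement than \eqref{eq:convcond0}.

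The correct handling of the cross terms is exactly where the structure of \eqref{eq:convcond0} comes from, and it is the part you defer to ``bookkeeping.'' The paper first bounds the off-diagonal coefficients by $\kappa_{ij}\ge\eta_i/m_j$, then uses the identity $\sum_{j\neq i}\frac1{m_j}\int a_j\dd\mu_j=-\frac1{m_i}\int a_i\dd\mu_i$ to turn the off-diagonal cross term into $+\sum_i\frac{\eta_i}{m_i}\bigl|\int a_i\dd\mu_i\bigr|^2$, which combines with the diagonal cross term $-\sum_i\kappa_{ii}\bigl|\int a_i\dd\mu_i\bigr|^2$ into $\sum_i\bigl(\frac{\eta_i}{m_i}-\kappa_{ii}\bigr)\bigl|\int a_i\dd\mu_i\bigr|^2$. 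This is then either nonnegative (when $\frac{\eta_i}{m_i}\ge\kappa_{ii}$) or absorbed via Jensen's inequality $\bigl|\int a_i\dd(\mu_i/p_i)\bigr|^2\le\int|a_i|^2\dd(\mu_i/p_i)$ (when $\frac{\eta_i}{m_i}<\kappa_{ii}$); that case distinction is precisely the origin of the $p_i\min(0,m_i\kappa_{ii}-\eta_i)$ term in \eqref{eq:convcond0}. Without this step your argument cannot reach the stated constant. (A minor additional point: your explanation of irreducibility is off --- for $n>1$, $\eta_i=\min_{j\neq i}\kappa_{ij}m_j$ is a minimum over a finite nonempty set of reals and is never $-\infty$.)
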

\begin{proof}
Thanks to the properties from Lemma \ref{lem:basic}, we are allowed to use Theorem \ref{thm:suffgen}. Let therefore $\mu\in \prbb^{\mathrm{ac}}$ and let $t_1,\ldots,t_n,\tilde t_1,\ldots,\tilde t_n:\R^d\to\R^d$ such that $t_j-\tilde t_j\in L^2(\R;\dd \mu_j)$ for all $j\in \{1,\ldots,n\}$. With the notation from Theorem \ref{thm:suffgen}, we have, using the convexity condition (W5) and the definition of the push-forward:
\begin{align*}
&\ent(\mu^s)=\frac12 \sum_{i=1}^n\sum_{j=1}^n \int_\Rd\int_\Rd W_{ij}(\tilde t_j(x)-\tilde t_i(y)+s[t_j(x)-t_i(y)-(\tilde t_j(x)-\tilde t_i(y))])\dd \mu_j(x)\dd\mu_i(y)\\
&\le (1-s)\ent(\mu^0)+s\ent(\mu^1)-\frac12 s(1-s)\cdot\frac12 \sum_{i=1}^n\sum_{j=1}^n \int_\Rd\int_\Rd  \kappa_{ij}|t_j(x)-t_i(y)-(\tilde t_j(x)-\tilde t_i(y))|^2\dd \mu_j(x)\dd\mu_i(y).
\end{align*}
In view of \eqref{eq:gengeoconv}, we have to verify that
\begin{align}
\label{eq:convproof}
\begin{split}
&\frac12 \sum_{i=1}^n\sum_{j=1}^n \int_\Rd\int_\Rd  \kappa_{ij}|t_j(x)-t_i(y)-(\tilde t_j(x)-\tilde t_i(y))|^2\dd \mu_j(x)\dd\mu_i(y)\\\quad&\ge \lambda_0 \sum_{i=1}^n \frac1{m_i}\int_\Rd |t_i(x)-\tilde t_i(x)|^2\dd\mu_i(x).
\end{split}
\end{align}
We first split up the l.h.s. of \eqref{eq:convproof} into its diagonal and off-diagonal part and perform an estimate on the latter introducing the numbers $\eta_i=\min\limits_{j\neq i}\kappa_{ij}m_j$:
\begin{align*}
\frac12 \sum_{i=1}^n\sum_{j=1}^n &\int_\Rd\int_\Rd  \kappa_{ij}|t_j(x)-t_i(y)-(\tilde t_j(x)-\tilde t_i(y))|^2\dd \mu_j(x)\dd\mu_i(y)\\
&\ge \frac12 \sum_i\sum_{j\neq i} \int_\Rd\int_\Rd \frac{\eta_i}{m_j}|(t_j(x)-\tilde t_j(x))-(t_i(y)-\tilde t_i(y))|^2 \dd\mu_j(x)\dd\mu_i(y)\\
&\quad+\frac12 \sum_i \int_\Rd\int_\Rd \kappa_{ii}|(t_i(x)-\tilde t_i(x))-(t_i(y)-\tilde t_i(y))|^2 \dd\mu_i(x)\dd\mu_i(y).
\end{align*}
Expanding the squares yields 
\begin{align}
&\frac12 \sum_i\sum_{j\neq i} \int_\Rd\int_\Rd \frac{\eta_i}{m_j}|(t_j(x)-\tilde t_j(x))-(t_i(y)-\tilde t_i(y))|^2 \dd\mu_j(x)\dd\mu_i(y)\nonumber\\
&\quad+\frac12 \sum_i \int_\Rd\int_\Rd \kappa_{ii}|(t_i(x)-\tilde t_i(x))-(t_i(y)-\tilde t_i(y))|^2 \dd\mu_i(x)\dd\mu_i(y)\nonumber\\
&=\frac12 \sum_i\sum_{j\neq i} \left(\int_\Rd\frac{p_i\eta_i}{m_j}|t_j(x)-\tilde t_j(x)|^2\dd \mu_j(x)+\int_\Rd\frac{p_j\eta_i}{m_j}|t_i(x)-\tilde t_i(x)|^2\dd \mu_i(x)\right)\label{eq:expandedsquares}\\
&\quad-\sum_i\left(\sum_{j\neq i}\int_\Rd\frac1{m_j}(t_j(x)-\tilde t_j(x))\dd\mu_j(x)\right)^\tT\left(\int_\Rd \eta_i(t_i(x)-\tilde t_i(x))\dd \mu_i(x)\right)\nonumber\\
&\quad+\sum_i\kappa_{ii}\left(\int_\Rd p_i|t_i(x)-\tilde  t_i(x)|^2\dd\mu_i(x)\right)-\sum_i\kappa_{ii}\left|\int_\Rd (t_i(x)-\tilde t_i(x))\dd \mu_i(x)\right|^2.\nonumber
\end{align}
Now, the special structure of $\prbb$ comes into play: since the weighted center of mass $E$ is fixed on $\prbb$, one has
\begin{align*}
E&=\sum_{j=1}^n\frac1{m_j}\int_\Rd x\dd(t_j{_\#}\mu_j)=\sum_{j=1}^n\frac1{m_j}\int_\Rd x\dd(\tilde t_j{_\#}\mu_j),
\end{align*}
and consequently
\begin{align*}
\sum_{j\neq i}\int_\Rd\frac1{m_j}(t_j(x)-\tilde t_j(x))\dd\mu_j(x)=-\int_\Rd\frac1{m_i}(t_i(x)-\tilde t_i(x))\dd\mu_i(x).
\end{align*}
We exploit this fact in order to simplify the second term on the r.h.s. of formula \eqref{eq:expandedsquares} above:
\begin{align*}
\mathrm{r.h.s.}=\sum_i\bigg\{&\left|\int_\Rd (t_i(x)-\tilde t_i(x))\dd\left(\frac1{p_i}{\mu_i}\right)(x)\right|^2 p_i^2\left(\frac{\eta_i}{m_i}-\kappa_{ii}\right)\bigg.\\&+\int_\Rd |t_i(x)-\tilde t_i(x)|^2\dd\left(\frac1{p_i}{\mu_i}\right)(x)p_i^2\left(\kappa_{ii}-\frac{\eta_i}{m_i}\right)\\
&+\bigg.\frac1{m_i}\int_\Rd |t_i(x)-\tilde t_i(x)|^2\dd\mu_i(x)\cdot \frac12 \sum_j p_j\left(\eta_j+\eta_i\frac{m_i}{m_j}\right)\bigg\}=:\sum_{i}S_i.
\end{align*}
We analyse each $S_i$ separately.\\
If $\frac{\eta_i}{m_i}-\kappa_{ii}\ge 0$, the first term in $S_i$ is nonnegative, so
\begin{align*}
S_i&\ge \frac1{m_i}\int_\Rd |t_i(x)-\tilde t_i(x)|^2\dd\mu_i(x)\cdot\left[p_i(m_i\kappa_{ii}-\eta_i)+\frac12\sum_jp_j\left(\eta_j+\eta_i\frac{m_i}{m_j}\right)\right].
\end{align*}
If $\frac{\eta_i}{m_i}-\kappa_{ii}< 0$, the sum of the first two terms in $S_i$ is nonnegative thanks to Jensen's inequality. Hence,
\begin{align*}
S_i&\ge \frac1{m_i}\int_\Rd |t_i(x)-\tilde t_i(x)|^2\dd\mu_i(x)\cdot\frac12\sum_jp_j\left(\eta_j+\eta_i\frac{m_i}{m_j}\right).
\end{align*}
Defining $\lambda_0$ as in \eqref{eq:convcond0} clearly leads to \eqref{eq:convproof}, completing the proof.
\end{proof}
\begin{remark}[Non-irreducible systems]
If system \eqref{eq:pdend} is \emph{not} irreducible, there exists an $I$-integer partition ($I\in\N$) of $n\in\N$ into $n_1+n_2+\ldots+n_I=n$ such that \eqref{eq:pdend} decomposes into $I$ independent irreducible subsystems having the same structure as \eqref{eq:pdend}, but with $n$ replaced by $n_1,\ldots,n_I$, respectively. The modulus of geodesic convexity of the interaction energy $\ent$ can now be computed as the minimum of the respective convexity moduli of each subsystem: if $n_k>1$ for some $k\in\{1,\ldots,I\}$, formula \eqref{eq:convcond0} applies; if $n_k=1$, McCann's criterion \cite{mccann1997} applies (and yields convexity modulus $m\kappa p$ for the respective $m,\kappa,p$ of the $k^{\mathrm{th}}$ subsystem in our framework).
\end{remark}
\begin{prop}[Necessary condition for $\lambda_0>0$]\label{rem:necconv}
If $\lambda_0>0$ in \eqref{eq:convcond0}, then for all $i\in\{1,\ldots,n\}$:
\begin{align*}
\sum_{j=1}^n \kappa_{ij}p_j&>0.
\end{align*}
This condition is \emph{not} sufficient (see the examples from the introduction).
\end{prop}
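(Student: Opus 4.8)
The plan is to prove the implication directly by producing, for each fixed index $i$, the lower bound $m_i\sum_{j=1}^n\kappa_{ij}p_j\ge\lambda_0$; since $m_i>0$ and $\lambda_0>0$ by hypothesis, this yields $\sum_{j=1}^n\kappa_{ij}p_j>0$ at once. The one ingredient that is not purely mechanical is that one must use the \emph{symmetry} of the matrix $\kappa$ from \textup{(W5)}: the one-sided definition $\eta_i=\min_{j\neq i}\kappa_{ij}m_j$ by itself loses too much to recover \eqref{eq:convcond0} (it does not control the diagonal contribution after multiplying through by $m_i$), whereas combining $\kappa_{ij}m_j\ge\eta_i$ with $\kappa_{ij}=\kappa_{ji}$ makes the estimate exactly sharp.

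First I would record a two-sided estimate on the off-diagonal entries. Fix $i$ and let $j\neq i$. By definition of $\eta_i$ we have $\kappa_{ij}m_j\ge\eta_i$, hence $\kappa_{ij}p_j\ge\eta_i p_j/m_j$; and using $\kappa_{ij}=\kappa_{ji}$ together with the definition of $\eta_j$ we also get $\kappa_{ij}m_i=\kappa_{ji}m_i\ge\eta_j$, hence $\kappa_{ij}p_j\ge\eta_j p_j/m_i$. Averaging the two gives, for $j\neq i$,
\[
\kappa_{ij}p_j\;\ge\;\frac12\,\eta_i\,\frac{p_j}{m_j}+\frac12\,\eta_j\,\frac{p_j}{m_i}.
\]

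Next I would sum this over $j\neq i$, add the diagonal term $\kappa_{ii}p_i$, and (writing $Q:=\sum_{j=1}^n p_j/m_j$ and $S:=\sum_{j=1}^n \eta_j p_j$ for brevity) obtain
\[
\sum_{j=1}^n\kappa_{ij}p_j\;\ge\;\kappa_{ii}p_i+\frac12\,\eta_i\Big(Q-\frac{p_i}{m_i}\Big)+\frac1{2m_i}\big(S-\eta_i p_i\big).
\]
Multiplying by $m_i$ and simplifying with $m_i\cdot(p_i/m_i)=p_i$ collapses the right-hand side to
\[
m_i\sum_{j=1}^n\kappa_{ij}p_j\;\ge\;p_i\big(m_i\kappa_{ii}-\eta_i\big)+\frac12\,m_i\,\eta_i\,Q+\frac12\,S\;=\;p_i\big(m_i\kappa_{ii}-\eta_i\big)+\frac12\sum_{j=1}^n p_j\Big(\eta_j+\eta_i\,\frac{m_i}{m_j}\Big).
\]
Since $p_i>0$, we have $p_i(m_i\kappa_{ii}-\eta_i)\ge p_i\min(0,m_i\kappa_{ii}-\eta_i)$, so the right-hand side is bounded below by the $i$-th term in the minimum defining $\lambda_0$ in \eqref{eq:convcond0}; therefore $m_i\sum_{j=1}^n\kappa_{ij}p_j\ge\lambda_0>0$, which proves the assertion. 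The non-sufficiency is already witnessed by the examples displayed in the introduction (e.g. the positive-definite $\kappa$ with entries $2$ on the diagonal and $-1$ off-diagonal, for which $\sum_j\kappa_{ij}p_j>0$ for every $i$ while $\lambda_0\le-2$).

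I do not expect a genuine obstacle: the whole proof is one short computation, and its only real content is the use of symmetry of $\kappa$ in the first step; the remaining manipulation is bookkeeping whose sole purpose is to verify that the lower bound obtained is precisely the $i$-th bracket of \eqref{eq:convcond0}.
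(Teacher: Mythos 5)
Your proof is correct and follows essentially the same path as the paper's own argument: both exploit the symmetry $\kappa_{ij}=\kappa_{ji}$ to split the off-diagonal contribution into two halves, bound one half via $\kappa_{ij}m_j\ge\eta_i$ and the other via $\kappa_{ji}m_i\ge\eta_j$, then absorb the diagonal term using $p_i(m_i\kappa_{ii}-\eta_i)\ge p_i\min(0,m_i\kappa_{ii}-\eta_i)$ to recover exactly the $i$-th bracket defining $\lambda_0$. The only cosmetic difference is that you average the two estimates term-by-term before summing, whereas the paper splits the whole sum into two halves and then applies symmetry to one of them; the algebra is identical.
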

\begin{proof}
Fix $i\in\{1,\ldots,n\}$. The following holds:
\begin{align*}
m_i\sum_j \kappa_{ij}p_j&=p_im_i\kappa_{ii}+\sum_{j\neq i}m_i\kappa_{ij}p_j=p_i(m_i\kappa_{ii}-\eta_i)+p_i\eta_i+\frac12 \sum_{j\neq i}p_j\kappa_{ji}m_i+\frac12\sum_{j\neq i}p_j\kappa_{ij}m_j\frac{m_i}{m_j},
\end{align*}
where we used the symmetry of $\kappa$. Now, we estimate using the definition of $\eta_i,\eta_j$:
\begin{align*}
m_i\sum_j \kappa_{ij}p_j&\ge p_i\min(0,m_i\kappa_{ii}-\eta_i)+\frac12 p_i\eta_i+\frac12 \sum_{j\neq i}p_j\eta_j+\frac12 p_i\eta_i\frac{m_i}{m_i}+\frac12 \sum_{j\neq i}p_j\eta_i\frac{m_i}{m_j}\\
&=p_i\min(0,m_i\kappa_{ii}-\eta_i)+\frac12\sum_j p_j\left(\eta_j+\eta_i\frac{m_i}{m_j}\right)\ge \lambda_0>0.
\end{align*}
\end{proof}

\subsection{Existence and uniqueness of gradient flow solutions}\label{subsec:gfsol}
With the results of Lemma \ref{lem:basic} and Theorem \ref{thm:geoconv} at hand, the following statement follows thanks to \cite[Chapter 11]{savare2008}:
\begin{thm}[Existence and uniqueness]\label{thm:exun}
Consider \eqref{eq:pdend} endowed with an inital datum $\mu^0\in\prb$. Then, there exists a \emph{gradient flow} solution $\mu\in AC^2_{\mathrm{loc}}([0,\infty);(\mathscr{P},\W_\prbb))$ to this initial-value problem: System \eqref{eq:pdend} holds in the sense of distributions and one has $\mu(0)=\mu^0$. Moreover, with $\lambda_0$ from \eqref{eq:convcond0}, the \emph{evolution variational estimate} holds for almost every $t>0$ and all $\nu\in\prb$:
\begin{align*}
\frac12 \frac{\dd^+}{\dd t}\W_\prbb(\mu(t),\nu)^2+\frac{\lambda_0}{2}\W_\prbb(\mu(t),\nu)^2\le \ent(\nu)-\ent(\mu(t)).
\end{align*}
Given another initial datum $\nu^0\in \mathscr{P}$ and the respective gradient flow solution $\nu\in AC^2_{\mathrm{loc}}([0,\infty);(\mathscr{P},\W_\prbb))$, the following \emph{contraction estimate} holds for all $t\ge 0$:
\begin{align}
\label{eq:contr}
\W_\prbb(\mu(t),\nu(t))&\le e^{-\lambda_0 t}\W_\prbb(\mu^0,\nu^0),
\end{align}
which implies in particular the uniqueness of solutions.
\end{thm}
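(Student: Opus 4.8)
The proof is an application of the abstract theory of metric gradient flows of \cite[Theorem~4.0.4 and Ch.~11]{savare2008}: all the structural hypotheses of that theory have been prepared in Lemma \ref{lem:basic} and Theorem \ref{thm:geoconv}, so the plan is to assemble them, invoke the abstract generation result, and then identify the resulting abstract flow with the PDE system \eqref{eq:pdend}. The first point is to record that $(\prbb,\W_\prbb)$ is a complete geodesic metric space that fits the Wasserstein framework of \cite[Ch.~11]{savare2008}. It is the subset of the weighted product $\prod_{j=1}^n\mathcal{P}_2(\Rd)$ (with the distance \eqref{eq:metric}) cut out by the affine constraints $\mu_j(\Rd)=p_j$ and $\sum_j m_j^{-1}\int_\Rd x\dd\mu_j=E$. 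This subset is $\W_\prbb$-closed, since convergence in $\W_\prbb$ forces convergence of the second — hence the first — moments of each component, so mass and weighted center of mass pass to the limit; completeness and the geodesic property are thus inherited from $\prod_j\mathcal{P}_2(\Rd)$. Moreover both McCann geodesics and the generalized geodesics of Definition \ref{definition:generalconv} remain inside $\prbb$, because linear interpolation of the underlying transport plans interpolates the value of the affine center-of-mass functional, which equals $E$ at both endpoints — this is the genuinely necessary structural fact.

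Next I would check the remaining assumptions on the energy. The functional $\ent$ is proper and real-valued by Lemma \ref{lem:basic}(a), and continuous — hence lower semicontinuous — on $(\prbb,\W_\prbb)$ by Lemma \ref{lem:basic}(b). It is coercive in the sense required by \cite{savare2008}: the quadratic bound \eqref{eq:entquadr} gives $\ent(\mu)\ge -A-B\,\W_\prbb(\mu,\bar\mu)^2$ for any fixed reference measure $\bar\mu\in\prbb$. Absolutely continuous measures are dense in energy by Lemma \ref{lem:basic}(c), and, crucially, $\ent$ is $\lambda_0$-convex along generalized geodesics by Theorem \ref{thm:geoconv}. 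With these ingredients, the abstract theory produces, for every $\mu^0\in\prb$, a curve $\mu\in AC^2_{\mathrm{loc}}([0,\infty);(\prbb,\W_\prbb))$ with $\mu(0)=\mu^0$, obtained as the limit of the minimizing-movement (JKO) scheme, which is a curve of maximal slope and, thanks to $\lambda_0$-convexity along generalized geodesics, satisfies the stated evolution variational inequality for every $\nu\in\prb$ and a.e. $t>0$.

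The contraction estimate \eqref{eq:contr} then follows from the EVI by the standard argument: write the EVI for $\mu$ tested against $\nu(t)$ and for $\nu$ tested against $\mu(t)$, add the two inequalities, and use that $t\mapsto\W_\prbb(\mu(t),\nu(t))^2$ is locally absolutely continuous so that the sum of the two one-sided derivatives dominates its full derivative (the chain-rule lemma \cite[Lemma~4.3.4]{savare2008}); this gives $\fder{t}\W_\prbb(\mu(t),\nu(t))^2\le-2\lambda_0\W_\prbb(\mu(t),\nu(t))^2$, and Gronwall's lemma yields \eqref{eq:contr}. Uniqueness of gradient flow solutions is the special case $\mu^0=\nu^0$.

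It remains to identify the abstract flow with \eqref{eq:pdend}. By the characterization of curves of maximal slope on products of Wasserstein spaces (\cite[Ch.~8 and~10]{savare2008}), $\mu$ solves, in the sense of distributions, the system of continuity equations $\partial_t\mu_j+\dv(\mu_j v_j)=0$ with $\sum_j\int_0^T\!\int_\Rd|v_j|^2\dd\mu_j\dd t<\infty$ for all $T>0$, where $(v_1,\dots,v_n)(t)$ is the minimal-norm element of the weighted Wasserstein subdifferential $-\partial\ent(\mu(t))$. The \emph{main remaining obstacle} is the explicit computation of this subdifferential in the measure-valued (not necessarily absolutely continuous) setting: using the $C^1$-regularity (W2) and the gradient growth bound of Lemma \ref{lem:grad} — which make $\nabla W_{ij}\ast\mu_j$ a continuous field of at most linear growth, hence an element of $L^2(\mu_i)$ because second moments are finite — one shows that the vector field $z\mapsto m_j\nabla\bigl(\sum_{i=1}^n W_{ij}\ast\mu_i\bigr)(z)$ belongs to $\partial_j\ent(\mu)$; the factor $m_j$ is produced by the weight $1/m_j$ in \eqref{eq:metric}, while the $\tfrac12$ of \eqref{eq:energy} drops out precisely because the symmetry (W1) and evenness (W3) make the two occurrences of $\mu_j$ in \eqref{eq:energy} contribute equally to the first variation. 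Since $\ent$ is $\lambda_0$-convex, its strong subdifferential is characterized by a variational inequality, so this explicit field is in fact the minimal selection $-v_j$. Substituting $v_j=-m_j\nabla\bigl(\sum_i W_{ij}\ast\mu_i\bigr)=-m_j\nabla\bigl(\sum_i W_{ji}\ast\mu_i\bigr)$ (by (W1), matching the ordering in \eqref{eq:pdend}) into the continuity equations reproduces exactly \eqref{eq:pdend} with $\mu(0)=\mu^0$, which completes the proof — the growth conditions (W4) and Lemma \ref{lem:grad} being the tools that make this last identification rigorous without any absolute-continuity assumption on $\mu$.
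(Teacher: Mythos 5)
Your argument is correct and is precisely the spelled-out version of what the paper does: the paper's proof of Theorem \ref{thm:exun} consists of citing \cite[Chapter 11]{savare2008} together with the preparatory results of Lemma \ref{lem:basic} (properness, lower semicontinuity, density of absolutely continuous measures) and Theorem \ref{thm:geoconv} ($\lambda_0$-convexity along generalized geodesics), exactly the ingredients you assemble. Your additional care in checking that $\prbb$ is a $\W_\prbb$-closed geodesic subset stable under generalized geodesics, that the coercivity bound follows from \eqref{eq:entquadr}, and that the minimal-selection velocity field reproduces the flux in \eqref{eq:pdend} via (W1)--(W4) and Lemma \ref{lem:grad}, is consistent with the intended reading.
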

\begin{coro}[The uniformly convex case]\label{coro:posconv}
If \eqref{eq:convcond0} yields $\lambda_0>0$, the measure
\begin{align*}
\mu^\infty:=(p_1,\ldots,p_n)^\tT\delta_{x^\infty},\qquad\text{with}\qquad x^\infty:=E\left[\sum_{j=1}^n\frac{p_j}{m_j}\right]^{-1}\in\Rd,
\end{align*}
is the unique minimizer of $\ent$ and the unique stationary state of \eqref{eq:pdend} on $\mathscr{P}$. It is globally asymptotically stable: The solution from Theorem \ref{thm:exun} converges exponentially fast in $(\prb,\W_\prbb)$ at rate $\lambda_0$ to $\mu^\infty$.
\end{coro}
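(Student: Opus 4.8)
The plan is to establish, in order: (i) $\mu^\infty$ is an admissible measure, i.e. $\mu^\infty\in\prb$; (ii) $\mu^\infty$ is a stationary state of \eqref{eq:pdend}; (iii) the stationary state is unique; (iv) $\mu^\infty$ is the unique minimizer of $\ent$; and (v) the gradient flow converges to $\mu^\infty$ exponentially fast. Throughout I rely on Theorem \ref{thm:exun} and on the fact (Theorem \ref{thm:geoconv}) that $\ent$ is $\lambda_0$-convex along generalized geodesics with $\lambda_0>0$. For (i), the components $\mu_i^\infty=p_i\delta_{x^\infty}$ carry the prescribed masses $p_i$ and finite second moments $p_i|x^\infty|^2$, and
\[
\sum_{i=1}^n\frac1{m_i}\int_\Rd x\dd\mu_i^\infty(x)=x^\infty\sum_{i=1}^n\frac{p_i}{m_i}=E,
\]
so the weighted center-of-mass constraint holds and $\mu^\infty\in\prb$. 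For (ii): since $W_{ij}\ast\mu_j^\infty=p_jW_{ij}(\cdot-x^\infty)$, the velocity field of the $i$-th component of \eqref{eq:pdend} at $\mu^\infty$ is $x\mapsto-m_i\sum_{j=1}^np_j\nabla W_{ij}(x-x^\infty)$; evaluated at the single point $x^\infty$ of $\supp\mu_i^\infty$ it equals $-m_i\sum_{j=1}^np_j\nabla W_{ij}(0)=0$, because (W2)--(W3) force $\nabla W_{ij}(0)=0$ (differentiate $W_{ij}(z)=W_{ij}(-z)$). Hence $\dv[m_i\mu_i^\infty\nabla(\sum_jW_{ij}\ast\mu_j^\infty)]=0$ distributionally, so the constant curve $t\mapsto\mu^\infty$ solves \eqref{eq:pdend}, and by the uniqueness part of Theorem \ref{thm:exun} it is the gradient flow solution with initial datum $\mu^\infty$.

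For (iii), suppose $\nu\in\prb$ is any stationary state; then $t\mapsto\nu$ and $t\mapsto\mu^\infty$ are both gradient flow solutions, so the contraction estimate \eqref{eq:contr} gives $\W_\prbb(\mu^\infty,\nu)\le e^{-\lambda_0t}\W_\prbb(\mu^\infty,\nu)$ for all $t\ge0$; since $\lambda_0>0$, letting $t\to\infty$ forces $\nu=\mu^\infty$. For (iv), I would feed the constant solution $\mu(t)\equiv\mu^\infty$ into the evolution variational estimate of Theorem \ref{thm:exun}: since $\frac{\dd^+}{\dd t}\W_\prbb(\mu(t),\nu)^2=0$, one obtains, for every $\nu\in\prb$,
\[
\tfrac{\lambda_0}{2}\W_\prbb(\mu^\infty,\nu)^2\le\ent(\nu)-\ent(\mu^\infty),
\]
whence $\ent(\nu)\ge\ent(\mu^\infty)$ with equality only when $\nu=\mu^\infty$; thus $\mu^\infty$ is the unique minimizer of $\ent$ on $\prb$. (Alternatively, $\lambda_0>0$ makes $\ent$ strictly convex along generalized geodesics, which yields at most one minimizer directly; abstract existence of a minimizer for a lower semicontinuous $\lambda_0$-convex functional with $\lambda_0>0$ together with the fact that a minimizer must be a stationary state again identifies it with $\mu^\infty$ by (iii).) Finally, for (v), apply \eqref{eq:contr} with $\nu^0=\mu^\infty$: for the gradient flow solution $\mu$ with arbitrary initial datum $\mu^0\in\prb$,
\[
\W_\prbb(\mu(t),\mu^\infty)\le e^{-\lambda_0t}\W_\prbb(\mu^0,\mu^\infty)\stackrel{t\to\infty}{\longrightarrow}0,
\]
which is the asserted global asymptotic stability with rate $\lambda_0$.

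The computations in (i), (ii) and (v) are routine. The one delicate point is the step in (ii) from ``$\mu^\infty$ is a distributional stationary solution'' to ``$t\mapsto\mu^\infty$ is \emph{the} gradient flow solution, in the EVI sense of Theorem \ref{thm:exun}, with initial datum $\mu^\infty$'': it is precisely this identification that legitimizes substituting the constant curve into \eqref{eq:contr} and into the evolution variational inequality in (iii) and (iv). It can be handled either by invoking uniqueness of gradient flow solutions after checking the constant curve satisfies the EVI (equivalently, that the metric slope $|\partial\ent|(\mu^\infty)=0$, which follows from $\nabla W_{ij}(0)=0$ and the structure of $\ent$), or by appealing to the general theory in \cite[Chapter 11]{savare2008}, where for $\lambda_0>0$ the unique minimizer, the unique zero of the slope, and the unique stationary state all coincide and attract every trajectory at rate $\lambda_0$.
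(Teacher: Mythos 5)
Your proposal is correct and follows essentially the same route as the paper: $\mu^\infty$ is a steady state because $\nabla W_{ij}(0)=0$ by (W2)--(W3), uniqueness of steady states and exponential convergence both come from the contraction estimate \eqref{eq:contr}, and the minimizer property comes from uniform geodesic convexity via the theory behind Theorem \ref{thm:exun}. Your only deviation is deriving the minimality of $\mu^\infty$ directly from the evolution variational inequality applied to the constant curve rather than from abstract existence/uniqueness of minimizers, and you are in fact more explicit than the paper about the (genuinely needed) identification of the constant curve with the gradient flow solution.
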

\begin{proof}
One easily sees---thanks to the symmetry property $\nabla W_{ij}(0)=0$ from conditions (W2)\&(W3)---that $\mu^\infty$ is a steady state of \eqref{eq:pdend}. It is unique by uniform geodesic convexity of $\ent$: the contraction estimate \eqref{eq:contr} necessarily implies that two steady states on $\mathscr{P}$ coincide. On the other hand, uniform geodesic convexity of $\ent$ implies existence and uniqueness of a minimizer of $\ent$ which is---since $\ent$ is nonincreasing along the solution $\mu(t)$---a steady state of \eqref{eq:pdend}. 
\end{proof}
As for scalar equations of the form \eqref{eq:pde1d}, system \eqref{eq:pdend} can be viewed as a continuum limit of a multi-particle system. To this end, we introduce the concept of \emph{particle solutions} as a conclusion to this section.
\begin{remark}[Particle solutions]
Assume that the initial datum is discrete, i.e. each component $\mu^0_i$ is a finite linear combination of Dirac measures:
\begin{align*}
\mu_i^0=\sum_{k=1}^{N_i}p_i^{k}\delta_{x_{i}^{0,k}} \quad(i=1,\ldots,n).
\end{align*}
There, the $N_i\in\N$ \emph{particles} of species $i$ have \emph{mass} $p_i^k>0$ and are at initial \emph{position} $x_{i}^{0,k}\in\R^d$, for $k=1,\ldots,N_i$, respectively. Let $N:=\sum\limits_{i=1}^n N_i$ and let a family $\rx=(x_i^k)$ ($k=1,\ldots,N_i$; $i=1,\ldots,n$) of $L^2$-absolutely continuous curves $x_i^k\in AC^2([0,\infty);\R^d)$ be given, such that the following initial-value problem for a system of $N$ ordinary differential equations on $\Rd$ is globally solved:
\begin{align}
\label{eq:particle}
\begin{split}
\fder{t}x_i^k(t)&=-m_i\sum_{j=1}^n\sum_{l=1}^{N_j}p_j^l\nabla W_{ij}(x_i^k(t)-x_j^l(t)),\qquad x_i^k(0)=x_{i}^{0,k},\qquad (k=1,\ldots,N_i;~i=1,\ldots,n).
\end{split}
\end{align}
Then it is easy to verify that the \emph{particle solution}
\begin{align}
\label{eq:atomic}
\mu_i(t)=\sum_{k=1}^{N_i}p_i^{k}\delta_{x_{i}^k(t)} \quad(i=1,\ldots,n)
\end{align}
is the unique gradient flow solution to system \eqref{eq:pdend} with initial datum $\mu^0$ given above. However, it is a non-trivial question if such $\rx$ exist, since (W1)-(W5) do not imply global Lipschitz-continuity of the r.h.s. in \eqref{eq:particle}. Nevertheless, \eqref{eq:particle} admits locally absolutely continuous solutions since this system possesses an underlying (discrete) gradient flow structure: define the finite-dimensional space
\begin{align*}
\prb_\mathrm{d}:=\left\{\rx\in \prod_{i=1}^n\prod_{k=1}^{N_i}\Rd\cong \R^{Nd}:\,p_i=\sum_{k=1}^{N_i}p_i^k~(i=1,\ldots,n);\,E=\sum_{i=1}^n\frac1{m_i}\sum_{k=1}^{N_i}p_i^kx_i^k\right\},
\end{align*}
endowed with the (weighted Euclidean) distance
\begin{align*}
\metr(\rx,\ry)=\left[\sum_{i=1}^n\frac1{m_i}\sum_{k=1}^{N_i}p_i^k|x_i^k-y_i^k|^2\right]^{1/2},
\end{align*}
and define the \emph{discrete interaction energy} $\ent_\mathrm{d}$ on $\prb_\mathrm{d}$ as
\begin{align*}
\ent_\mathrm{d}(\rx):=\frac12 \sum_{i=1}^n\sum_{j=1}^n\sum_{k=1}^{N_i}\sum_{l=1}^{N_j}p_i^k p_j^l W_{ij}(x_i^k-x_j^l).
\end{align*}
Applying the same method of proof as for Theorem \ref{thm:geoconv} \emph{mutatis mutandis} for the discrete framework, one can show that $\ent_{\mathrm{d}}$ is $\lambda_0$-geodesically convex on $(\prb_{\mathrm{d}},\metr)$ with \emph{the same} modulus of convexity $\lambda_0$ as in the continuous case \eqref{eq:convcond0}. We can again invoke \cite{savare2008} to obtain existence and uniqueness of a solution curve $\rx\in AC^2_{\mathrm{loc}}([0,\infty);(\prb_{\mathrm{d}},\metr))$ to the particle system \eqref{eq:particle}. Conversely, thanks to the uniqueness of solutions to both \eqref{eq:pdend} and \eqref{eq:particle}, a gradient flow solution $\mu$ to \eqref{eq:pdend} of the form \eqref{eq:atomic} can be represented by a solution $\rx$ to \eqref{eq:particle}.
\end{remark}


\section{Qualitative properties in one spatial dimension}\label{sec:qual}
In this section, we analyse the qualitative behaviour of the solution from Theorem \ref{thm:exun} in the general scenario, i.e. the criterion for geodesic convexity may only yield $\lambda_0\le 0$. In this case, the contraction estimate \eqref{eq:contr} does not allow for conclusions on the long-time behaviour of the solution.

From now on, consider \eqref{eq:pdend} in one spatial dimension $d=1$; and let $\mu$ be the solution to \eqref{eq:pdend} with initial datum $\mu^0\in\prb$, as given in Theorem \ref{thm:exun}. First, we rewrite system \eqref{eq:pdend} in terms of the inverse distribution functions $u=(u_1,\ldots,u_n)$; recall their definition from \eqref{eq:cumF}\&\eqref{eq:invu}.

For all $z\in [0,1)$, one has $z=F_i(t,u_i(t,z))$. Differentiation w.r.t. $t$ yields
\begin{align*}
0&=\partial_t F_i(t,u_i(t,z))+\partial_x F_i(t,u_i(t,z))\partial_t u_i(t,z)\\
&=\int_{-\infty}^{u_i(t,z)}\frac1{p_i}\partial_y\left(\sum_{j=1}^nm_i\mu_i(t,y)(W_{ij}'\ast\mu_j)(t,y)\right)\dd y+\frac1{p_i}\mu_i(t,u_i(t,z))\partial_t u_i(t,z)\\
&=\frac{m_i}{p_i}\sum_{j=1}^n\mu_i(t,u_i(t,z))\int_\R W_{ij}'(u_i(t,z)-y)\mu_j(t,y)\dd y+\frac1{p_i}\mu_i(t,u_i(t,z))\partial_t u_i(t,z).
\end{align*}
Rearranging yields -- with the help of (W3) and the transformation $\xi:=F_j(t,y)$:
\begin{align}
\label{eq:u}
\partial_t u_i(t,z)&=m_i\sum_{j=1}^n p_j\int_0^1 W_{ij}'(u_j(t,\xi)-u_i(t,z))\dd \xi\qquad (i=1,\ldots,n).
\end{align}
It is a consequence of Theorem \ref{thm:exun} that given a gradient flow solution $\mu$ to \eqref{eq:pdend}, the corresponding curve of pseudo-inverse distribution functions $u\in AC^2_{\mathrm{loc}}([0,\infty);L^2([0,1];\R^n))$ solves \eqref{eq:u}. Furthermore, since $\mu(t)\in\prb$ for all $t\ge 0$, $u_i(t,\cdot)$ is a non-decreasing \emph{càdlàg} function on $(0,1)$. Conservation of the weighted center of mass $E$ over time is reflected in terms of $u$ by the identity
\begin{align}
\label{eq:uE}
E&=\sum_{j=1}^n \frac{p_j}{m_j}\int_0^1 u_j(t,z)\dd z\qquad\forall t\ge 0.
\end{align}
The concept of inverse distribution functions substantially simplifies the analysis of solutions to \eqref{eq:pdend} since there does not appear any spatial derivative on the right-hand side of \eqref{eq:u} anymore. However, this approach can be employed in one spatial dimension $d=1$ only.

\subsection{Speed of propagation and confinement}\label{subsec:speed}
In this section, we investigate the rate of propagation of the solution to \eqref{eq:pdend} in space over time, given an initial datum with compact support. We first obtain -- for arbitrary potentials satisfying (W1)-(W5) -- boundedness of the support of $\mu(t)$ for fixed time $t>0$, and second -- under more restrictive requirements on the potential $W$ -- $t$-\emph{uniform} boundedness of $\supp\mu(t)$.
\begin{prop}[Finite speed of propagation]
\label{prop:finsupp}
Let an initial datum $\mu^0$ with compact support and $T>0$ be given. Then, there exists a constant $K=K(T,\mu^0)>0$ such that for all $t\in [0,T]$,
\begin{align*}
\supp \mu(t)&\subset [-K,K].
\end{align*}
\end{prop}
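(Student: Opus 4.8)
The plan is to pass to the inverse-distribution-function formulation \eqref{eq:u} and control $\|u_i(t,\cdot)\|_{L^\infty(0,1)}$ for every $i\in\{1,\ldots,n\}$; this suffices, because each $u_i(t,\cdot)$ is non-decreasing and c\`adl\`ag, so $\supp\mu_i(t)=\big[\lim_{z\to 0^+}u_i(t,z),\,\lim_{z\to 1^-}u_i(t,z)\big]\subset[-\|u_i(t,\cdot)\|_{L^\infty(0,1)},\|u_i(t,\cdot)\|_{L^\infty(0,1)}]$ whenever the right-hand side is finite. Two preliminary facts will be used. First, as recorded after \eqref{eq:u}, the gradient flow solution $\mu$ from Theorem \ref{thm:exun} corresponds to a curve $u\in AC^2_{\mathrm{loc}}([0,\infty);L^2([0,1];\R^n))$ solving \eqref{eq:u}; in particular $t\mapsto u(t,\cdot)$ is continuous into $L^2$, hence $\rho_T:=\sup_{t\in[0,T]}\|u(t,\cdot)\|_{L^2([0,1];\R^n)}<\infty$. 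Second, compactness of $\supp\mu^0$ provides $L>0$ with $\|u_i(0,\cdot)\|_{L^\infty(0,1)}\le L$ for all $i$.

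Next I would estimate the right-hand side of \eqref{eq:u} by means of the linear growth bound of Lemma \ref{lem:grad}: for a.e. $(t,z)$ with $t\in[0,T]$ and each $i$,
\[
|\partial_t u_i(t,z)|\le m_i\sum_{j=1}^n p_j\overline{C}_{ij}\left(\int_0^1|u_j(t,\xi)|\dd\xi+|u_i(t,z)|+1\right)\le C_2\big(\rho_T+1+|u_i(t,z)|\big),
\]
where $C_2:=\max_{i}m_i\sum_{j}p_j\overline{C}_{ij}$ and $\int_0^1|u_j(t,\xi)|\dd\xi\le\|u_j(t,\cdot)\|_{L^2}\le\rho_T$ was used. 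The decisive point is that the only contribution of the (so far uncontrolled) size of $u$ comes through the a priori finite quantity $\rho_T$, the remaining dependence being on the pointwise value $|u_i(t,z)|$ at the same $z$, which decouples the estimate across $z$. Integrating \eqref{eq:u} in time and applying Fubini's theorem — legitimate since the right-hand side is integrable on $[0,T]\times(0,1)$, as $\int_0^T\!\int_0^1|u_i(s,z)|\dd z\dd s\le T\rho_T<\infty$ — yields, for a.e. $z\in(0,1)$,
\[
|u_i(t,z)|\le |u_i(0,z)|+\int_0^t C_2\big(\rho_T+1+|u_i(s,z)|\big)\dd s\le L+C_2(\rho_T+1)T+C_2\int_0^t|u_i(s,z)|\dd s.
\]

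Grönwall's inequality then gives $|u_i(t,z)|\le\big(L+C_2(\rho_T+1)T\big)e^{C_2 T}=:K$ for all $t\in[0,T]$, a.e. $z\in(0,1)$, and all $i$; since $K$ depends only on $T$, on the fixed model parameters, on the support radius $L$ of $\mu^0$, and on $\rho_T$ (itself determined by $\mu^0$ and $T$ through the solution), taking the essential supremum over $z$ and the maximum over $i$ gives $\supp\mu_i(t)\subset[-K,K]$ for all $t\in[0,T]$ and all $i$, as claimed. The main obstacle is to keep the Grönwall argument non-circular: one must know in advance that $u$ does not blow up in $L^2$ on $[0,T]$, which is exactly what the $AC^2_{\mathrm{loc}}$-regularity (continuity into $L^2$) from Theorem \ref{thm:exun} supplies; the measurability and integrability needed to pass from the $L^2$-valued integral identity to the pointwise-in-$z$ inequality are routine. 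An alternative, technically lighter, route is to establish the bound first for atomic initial data, where \eqref{eq:u} reduces to the particle ODE system \eqref{eq:particle} and the Grönwall estimate for $\max_{i,k}|x_i^k(t)|$ is elementary, and then pass to the limit for general compactly supported $\mu^0$ via the contraction estimate \eqref{eq:contr}, observing that $K$ depends on the approximating data only through their common support radius and their uniformly bounded second moments.
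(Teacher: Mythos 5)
Your argument is correct and follows essentially the same route as the paper: rewrite the system via the pseudo-inverse distribution functions, use the linear growth bound of Lemma \ref{lem:grad}, control the coupling term through the a priori finite $L^2$-norm of $u$ supplied by the $AC^2$-regularity of the gradient flow solution, and close with Gr\"onwall. The only (cosmetic) difference is that the paper runs the Gr\"onwall estimate on the squared endpoint values $u_i(t,1-\eps)^2$ and $u_i(t,\eps)^2$ before letting $\eps\searrow 0$, whereas you run it on $|u_i(t,z)|$ for a.e.\ $z$ and then take the essential supremum; both are legitimate.
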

\begin{proof}
For $t\in [0,T]$ and $i\in\{1,\ldots,n\}$, denote
\begin{align*}
u_i(t,1^-):=\lim_{\eps\searrow 0}u_i(t,1-\eps)\in \R\cup\{+\infty\}\quad\text{and}\quad u_i(t,0^+):=\lim_{\eps\searrow 0}u_i(t,\eps)\in\R\cup \{-\infty\}.
\end{align*}
The assertion will follow from finiteness of those limits. Let $\eps>0$. Then,
\begin{align*}
\partial_t (u_i(t,1-\eps)^2)&\le 2|u_i(t,1-\eps)|\sum_{j=1}^nm_ip_j\int_0^1|W_{ij}'(u_j(t,\xi)-u_i(t,1-\eps))|\dd\xi.
\end{align*}
Lemma \ref{lem:grad}, H\"older's and Young's inequality eventually lead to
\begin{align}
\label{eq:fsproof1}
\begin{split}
&2|u_i(t,1-\eps)|\sum_{j=1}^nm_ip_j\int_0^1|W_{ij}'(u_j(t,\xi)-u_i(t,1-\eps))|\dd\xi\\
&\le 2|u_i(t,1-\eps)|\sum_{j=1}^n\overline{C}_{ij}m_ip_j\left(\int_0^1 |u_j(t,\xi)|\dd\xi+|u_i(t,1-\eps)|+1\right)\\
&\le 2\left[\sum_{j=1}^n \overline{C}_{ij}m_ip_j+1\right]u_i(t,1-\eps)^2+\left(\sum_{j=1}^n \overline{C}_{ij}m_ip_j\right)^2+2\max_j\left(\overline{C}_{ij}^2m_i^2m_jp_j\right)\sum_{j=1}^n\int_0^1\frac{p_j}{m_j}u_j(t,\xi)^2\dd\xi.
\end{split}
\end{align}
With the transformation $\xi:=F_j(t,x)$, we observe that the sum in the last term on the r.h.s. of \eqref{eq:fsproof1} can be expressed in terms of the second moments $\mom{\mu_j(t)}$ and of $\W_\prbb(\mu(t),\delta_0\einsvec)$, where $\einsvec=(1,1,\ldots,1)^\tT\in\R^n$:
\begin{align*}
\sum_{j=1}^n\int_0^1 \frac{p_j}{m_j}u_j(t,\xi)^2\dd\xi&=\sum_{j=1}^n\frac1{m_j}\mom{\mu_j(t)}=\W_\prbb^2(\mu(t),\delta_0\einsvec).
\end{align*}
Since $\mu\in AC^2([0,T];(\prb,\W_\prbb))$, there exists $\varphi\in L^2([0,T])$ such that
\begin{align*}
\W_\prbb(\mu(t),\mu^0)&\le \int_0^t \varphi(s)\dd s,\qquad\forall~t\in[0,T].
\end{align*}
We obtain
\begin{align*}
\sum_{j=1}^n\int_0^1 \frac{p_j}{m_j}u_j(t,\xi)^2\dd\xi&\le 2\W_\prbb^2(\mu(t),\mu^0)+2\W_\prbb^2(\mu^0,\delta_0\einsvec)\le 2\left(\int_0^t \varphi(s)\dd s\right)^2+2\W_\prbb^2(\mu^0,\delta_0\einsvec)\\
&\le 2T\|\varphi\|_{L^2([0,T])}^2+2\sum_{j=1}^n\frac1{m_j}\mom{\mu^0_j},
\end{align*}
which is a constant depending on $T$ and $\mu^0$. Inserting into \eqref{eq:fsproof1}, we observe
\begin{align*}
\partial_t(u_i(t,1-\eps)^2)&\le Au_i(t,1-\eps)^2+B(T,\mu^0),
\end{align*}
for suitable constants $A,B>0$. We apply Gronwall's lemma, let $\eps\searrow 0$ and use that -- since $\mu^0$ has compact support by assumption -- the limit $u_i(0,1^-)$ exists in $\R$:
\begin{align*}
u_i(t,1^-)^2&\le \left[u_i(0,1^-)^2+\frac{B}{A}\right]\exp(AT)\quad\forall~t\in[0,T].
\end{align*}
Thus, $u_i(t,1^-)$ is a finite value, at each $t\in[0,T]$. Along the same lines, it can be shown that
\begin{align*}
u_i(t,0^+)^2&\le \left[u_i(0,0^+)^2+\frac{B}{A}\right]\exp(AT)\quad\forall~t\in[0,T].
\end{align*}
Since by construction $\supp\mu_i(t)\subset [u_i(t,0^+),u_i(t,1^-)]$, the assertion is proven.
\end{proof}
The statement of Proposition \ref{prop:finsupp} shows that at fixed $t\ge 0$, the limits
\begin{align*}
u_i(t,1^-):=\lim_{\eps\searrow 0}u_i(t,1-\eps)\quad\text{and}\quad u_i(t,0^+):=\lim_{\eps\searrow 0}u_i(t,\eps)
\end{align*}
exist (in $\R$). In order to prove uniform confinement of the solution, we show uniform boundedness of those limits. We first introduce a requirement on the potential by the following
\begin{definition}[Confining potentials]\label{def:selfc}
We call an interaction potential $W$ satisfying \textup{(W1)--(W5)} \emph{confining} if there exists $R>0$ such that:
\begin{enumerate}[(i)]
\item System \eqref{eq:pdend} is \emph{irreducible at large distance}, that is the graph $G'=(V_{G'},E_{G'})$ with nodes $V_{G'}=\{1,\ldots,n\}$ and edges $E_{G'}=\{(i,j)\in V_{G'}\times V_{G'}:\,W_{ij}'\not\equiv 0 \text{ on }(R,\infty)\}$ is connected.
\item There exists a matrix $C\in\Matn$ such that for each $i,j\in\{1,\ldots,n\}$, the map $W_{ij}$ is $C_{ij}$-(semi-)convex on the interval $(R,\infty)$ and the following holds: \\ 
If $n=1$, then $C>0$. If $n>1$, with $\tilde\eta_i:=\min\limits_{j\neq i}C_{ij}p_j$ for all $i\in\{1,\ldots,n\}$,
\begin{align}
\label{eq:lself}
\tilde\lambda_0&:=\min_{i\in\{1,\ldots,n\}}\left[p_i\min(0,m_iC_{ii}-\tilde\eta_i)+\frac12\sum_{j=1}^n p_j\left(\tilde\eta_j+\tilde\eta_i\frac{m_i}{m_j}\right)\right]>0.
\end{align}
\end{enumerate}
\end{definition}
\begin{remark}[Geodesic convexity and confinement]
In the scalar case $n=1$, uniform geodesic convexity of the interaction energy $\ent$ is \emph{equivalent} to $\kappa$-convexity of $W$ with $\kappa>0$ \cite{mccann1997}. So, the potential is confining. Also for genuine systems, if $\lambda_0>0$ in \eqref{eq:convcond0}, the definition $C:=\kappa$ yields $\tilde\lambda_0=\lambda_0>0$. Hence, our criterion for uniform geodesic convexity of $\ent$ necessarily implies that $W$ is a confining potential. Naturally, if the system is not irreducible at large distance, the independent irreducible subsystems should be considered separately.
\end{remark}

\begin{thm}[Confinement]
\label{prop:confsupp}
Assume that $W$ is confining and let $\mu^0$ have compact support. Then, there exists a constant $K=K(\mu^0)>0$ independent of $t$ such that for all $t\ge 0$:
\begin{align}
\label{eq:confsupp}
\supp \mu(t)&\subset [-K,K].
\end{align}
\end{thm}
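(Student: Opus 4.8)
The plan is to work at the level of the inverse distribution functions $u$ solving \eqref{eq:u}, and to track the boundary traces $u_i(t,1^-)$ and $u_i(t,0^+)$ whose finiteness for fixed $t$ was already established in Proposition \ref{prop:finsupp}. The key quantity to control uniformly in time is the \emph{diameter} of the joint support, something like $D(t):=\max_i u_i(t,1^-)-\min_i u_i(t,0^+)$ (or a variant using $\max_{i,j}$ of differences of traces); combined with conservation of the weighted center of mass \eqref{eq:uE}, a uniform bound on $D(t)$ immediately yields a uniform bound on each $\supp\mu_i(t)\subset[-K,K]$. So the whole proof reduces to a differential inequality for $D(t)$ that forces it to stay bounded.

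First I would fix a large time-dependent "rightmost" index $i^\ast=i^\ast(t)$ realizing $\max_i u_i(t,1^-)$ and a "leftmost" index realizing $\min_i u_i(t,0^+)$, and differentiate $u_{i^\ast}(t,1-\eps)-u_{i_\ast}(t,\eps)$ using \eqref{eq:u}. The idea is that once $D(t)$ exceeds a threshold determined by $R$ from Definition \ref{def:selfc}, all the relevant arguments $u_j(t,\xi)-u_i(t,z)$ entering the extremal equations lie (in absolute value) beyond $R$, so that the $C_{ij}$-semiconvexity of $W_{ij}$ on $(R,\infty)$ applies to $W_{ij}'$ there. Concretely, $C_{ij}$-semiconvexity on $(R,\infty)$ gives the one-sided Lipschitz / monotonicity estimate $\big(W_{ij}'(a)-W_{ij}'(b)\big)(a-b)\ge C_{ij}(a-b)^2$ for $a,b>R$, which is exactly the ingredient that made the $\lambda_0$-convexity computation in Theorem \ref{thm:geoconv} work. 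Replaying that algebraic computation — the same splitting into diagonal and off-diagonal parts, the same use of the center-of-mass constraint \eqref{eq:uE} to eliminate the linear terms, and the definition of $\tilde\eta_i$ and $\tilde\lambda_0$ — but now applied to the differences $u_j(t,\xi)-u_{i^\ast}(t,z)$ rather than to transport-map differences, should produce an inequality of the form $\frac{\dd}{\dd t}D(t)\le -\tilde\lambda_0 D(t)+\text{(bounded)}$ whenever $D(t)>D_R$ for some threshold $D_R$ depending on $R$, the $m_j,p_j$, and the sup of $|W_{ij}'|$ on $[-R',R']$ for an appropriate $R'$. Irreducibility at large distance (Definition \ref{def:selfc}(i)) is what guarantees that the off-diagonal coupling constants $\tilde\eta_i$ are genuinely built from edges present in $G'$, so that the same graph-connectivity argument behind Theorem \ref{thm:geoconv} legitimizes the estimate $\kappa_{ij}\rightsquigarrow C_{ij}$ substitution; in the scalar case $n=1$ one just uses $C>0$ directly. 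Since $\tilde\lambda_0>0$, Gronwall (or a comparison-ODE argument: $D$ cannot cross above the stationary level $D_R+\text{bounded}/\tilde\lambda_0$ coming from below, and stays below if it starts below) yields $D(t)\le\max\{D(0),\,D_R+B/\tilde\lambda_0\}$ for all $t\ge0$, which is the uniform bound; note $D(0)<\infty$ because $\mu^0$ is compactly supported.

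The technical subtlety — and the step I expect to be the main obstacle — is the bookkeeping needed to justify working with the nondifferentiable-in-$i$ quantity $D(t)$: the index $i^\ast(t)$ may jump, $u_i(t,\cdot)$ are only càdlàg so the traces $u_i(t,1^-),u_i(t,0^+)$ must be handled as limits $\eps\searrow0$ as in Proposition \ref{prop:finsupp}, and one must argue that when $D(t)$ is large the \emph{only} arguments that could possibly be $\le R$ in absolute value are those with both indices at the same extreme end, whose contribution is controlled by the uniform bound on $|W_{ij}'|$ on the bad compact region (via Lemma \ref{lem:grad} one even has at worst linear growth, but on $[-R',R']$ it is just a finite constant). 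A clean way to sidestep the jumping-index issue is to instead estimate, for every fixed pair $(i,k)$, the quantity $u_i(t,1-\eps)^2$ (resp.\ $u_i(t,\eps)^2$) but now \emph{absorbing the good sign} coming from large-distance convexity: one shows $\frac{\dd}{\dd t}\sum_i\frac1{m_i}\big(u_i(t,1-\eps)^2+u_i(t,\eps)^2\big)$ (or a suitably weighted combination) is $\le -2\tilde\lambda_0(\text{same})+\text{bounded}$ once all traces are simultaneously large, and otherwise is trivially controlled — essentially redoing the Proposition \ref{prop:finsupp} computation but keeping the previously discarded negative term. Then a Lyapunov / invariant-region argument on this scalar functional, together with \eqref{eq:uE}, closes the estimate. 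Either route reduces, after the careful $\eps\searrow0$ and large-$D$ localization, to the already-proven convexity algebra of Theorem \ref{thm:geoconv} with $\kappa\to C$, so the real work is the localization and the Gronwall/invariant-region endgame rather than any new inequality.
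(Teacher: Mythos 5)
Your outline is in the same spirit as the paper's Step 2 — differentiate the boundary traces $u_i(t,1^\pm)$, invoke the large-distance semiconvexity of $W_{ij}$ to get a one-sided dissipative term, and close with Gronwall — but you are missing the paper's Step 1, which is what actually makes the trace ODE decouple into a scalar inequality. When you differentiate $u_i(t,1^-)$ using \eqref{eq:u} and apply the $C_{ij}$-convexity outside $[-R,R]$, the resulting source term is not a constant: it contains $\int_0^1 u_j(t,\xi)\,\dd\xi$ for all $j$, hence (after Cauchy--Schwarz) the second moments $\mom{\mu_j(t)}$. The paper disposes of these by first proving, \emph{independently of any trace estimate}, a uniform-in-time bound on $\mom{\mu_j(t)}$: the confining assumption yields the coercivity $W_{ij}(z)\ge\frac12 C_{ij}^\eps z^2 - D$ on all of $\R$, so $2\ent(\mu^0)\ge 2\ent(\mu(t))\ge\tilde\lambda_0^\eps\sum_j\frac1{m_j}\mom{\mu_j(t)}-\mathrm{const}$, where the lower bound is obtained by replaying the $\lambda_0$-algebra from Theorem~\ref{thm:geoconv} with $t_j\equiv 0$, $\tilde t_j=\id$. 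Only \emph{after} this a priori second-moment bound does the trace ODE become $\partial_t u_i(t,1^-)\le -m_i\tilde\lambda_0\,u_i(t,1^-)+C''$ with a genuine constant $C''$, and Gronwall closes. Your proposal tries to close everything at the trace level; since $|u_j(\xi)|\le\max(|u_j(1^-)|,|u_j(0^+)|)$ you could in principle bound the offending integrals by the traces themselves, but then the ``bounded'' term in your inequality $\dot D\le-\tilde\lambda_0 D+\text{bounded}$ becomes linear in the traces, and it is no longer obvious (and you do not show) that the net coefficient stays negative. This is the genuine gap.

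Two smaller points. First, your claim that ``once $D(t)$ exceeds a threshold determined by $R$, all the relevant arguments $u_j(t,\xi)-u_{i^\ast}(t,z)$ lie beyond $R$ in absolute value'' is false: even for arbitrarily large diameter, most of the mass of all species can lie within distance $R$ of the rightmost trace $u_{i^\ast}(t,1^-)$. The correct handling (used in the paper) is to consider only indices $i$ with $u_i(t,1^-)\ge\max_j u_j(t,1^-)-R$, so that one automatically has $u_j(\xi)-u_i(1^-)\le R$ for all $j,\xi$, and then partition $[0,1)$ into $A_1^j=\{u_j(\xi)-u_i(1^-)<-R\}$ where the $C_{ij}$-convexity applies and $A_2^j=\{|u_j(\xi)-u_i(1^-)|\le R\}$ whose contribution is a constant because $W_{ij}'$ is bounded on $[-R,R]$ — the set $A_2^j$ is not restricted to ``both indices at the same extreme end''. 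Second, the diameter $D(t)$ does not satisfy a clean single differential inequality because the mobilities $m_i$ differ; the paper avoids this by bounding $\max_j u_j(t,1^-)$ and $\min_j u_j(t,0^+)$ separately. The fact $\sum_j C_{ij}p_j\ge\tilde\lambda_0>0$ (the content of Proposition~\ref{rem:necconv} applied to $C$ in place of $\kappa$) is the single scalar coefficient that appears in each of these decoupled inequalities, and it does not require the full $\lambda_0$-algebra at that stage — only Step 1 does.
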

\begin{proof}
We prove the assertion in the case of genuine systems $n>1$.\\
\noindent \underline{Step 1:} estimate on second moments of $\mu$.\\
Let $\eps>0$ be sufficiently small such that replacing $C_{ij}$ by $C_{ij}^\eps:=C_{ij}-\eps$ in \eqref{eq:lself} still yields a number $\tilde\lambda_0^\eps>0$, possibly with $\tilde\lambda_0^\eps <\tilde\lambda_0$. From the $C_{ij}$-convexity of $W_{ij}$  on $(R,\infty)$ and with the help of Young's inequality, we get for all $z>R$:
\begin{align*}
W_{ij}(z)&\ge W_{ij}(R)+W_{ij}'(R)(z-R)+\frac12 C_{ij}(z-R)^2\ge \frac12 C_{ij}^\eps z^2-D_{ij},
\end{align*}
for appropriate constants $D_{ij}>0$. Thanks to (W2)\&(W3), enlarging the constants, there exists $D>0$ such that for all $i,j\in\{1,\ldots,n\}$ and \emph{for all} $z\in\R$:
\begin{align}
\label{eq:Wper}
W_{ij}(z)&\ge \frac12 C_{ij}^\eps z^2-D.
\end{align}
We now use boundedness of the energy $\ent$ along the (gradient flow) solution to obtain with \eqref{eq:Wper} for all $t\ge 0$:
\begin{align*}
2\ent(\mu^0)\ge 2 \ent(\mu(t))&\ge \frac12\sum_{i=1}^n\sum_{j=1}^n\int_\R\int_\R C_{ij}^\eps(x-y)^2\dd \mu_j(x)\dd\mu_i(y)-D\left(\sum_{j=1}^n p_j\right)^2.
\end{align*}
The first term on the r.h.s. has precisely the same structure as the l.h.s. in \eqref{eq:convproof} for $t_i=t_j\equiv 0$ and $\tilde t_i=\tilde t_j=\id$. Arguing exactly as in the proof of Theorem \ref{thm:geoconv}, we obtain
\begin{align*}
2\ent(\mu^0)&\ge \tilde\lambda_0^\eps \sum_{j=1}^n\frac1{m_j}\mom{\mu_j(t)}-D\left(\sum_{j=1}^n p_j\right)^2.
\end{align*}
All in all, we have proven uniform boundedness of the second moments: There exists $C_2>0$ auch that for all $t\ge 0$ and all $i\in\{1,\ldots,n\}$, one has $\mom{\mu_i(t)}\le C_2$.

\noindent \underline{Step 2:} $L^\infty$ estimate for $u$.\\
We first prove an upper bound. For each $t\ge 0$, we consider those indices $i\in\{1,\ldots,n\}$, where
\begin{align*}
u_i(t,1^-)&\ge \max_{j\in\{1,\ldots,n\}}u_j(t,1^-)-R.
\end{align*}
That is, for all $\xi\in [0,1)$ and all $j\in\{1,\ldots,n\}$: $u_i(t,1^-)\ge u_j(t,\xi)-R$. We thus have, for each $j\in\{1,\ldots,n\}$, a partition of $[0,1)$ into two sets $A_1^j$ and $A_2^j$, where
\begin{align*}
A_1^j:=\{\xi\in[0,1):\,u_j(t,\xi)-u_i(t,1^-)<-R\},\qquad A_2^j:=\{\xi\in [0,1):\,|u_j(t,\xi)-u_i(t,1^-)|\le R\}.
\end{align*}
Since $W_{ij}'$ is continuous thanks to (W2), it is bounded on the interval $[-R,R]$. The $C_{ij}$-convexity of $W$ on $(-\infty,-R)$ yields
\begin{align*}
W_{ij}'(z)-C_{ij}z&\le W'_{ij}(-R)-C_{ij}(-R)\qquad \forall z<-R,
\end{align*}
which can be rewritten as follows using (W3):
\begin{align*}
W_{ij}'(z)&\le C_{ij}z+C_{ij}R-W_{ij}'(R)\qquad \forall z<-R.
\end{align*}
Hence, we obtain
\begin{align*}
\partial_t u_i(t,1^-)&\le \sum_{j=1}^n \int_{A_1}m_iC_{ij}p_j(u_j(t,\xi)-u_i(t,1^-))\dd \xi+C_0,
\end{align*}
for some constant $C_0>0$. Then, with the help of H\"older's inequality,
\begin{align*}
&\sum_{j=1}^n \int_{A_1^j}m_iC_{ij}p_j(u_j(t,\xi)-u_i(t,1^-))\dd \xi\\
&\le m_i\sum_{j=1}^nC_{ij}p_j \left[ \int_0^1 (u_j(t,\xi)-u_i(t,1^-))\dd \xi-\int_{A_2^j}(u_j(t,\xi)-u_i(t,1^-))\dd \xi\right]\\
&\le -m_i\sum_{j=1}^nC_{ij}p_j u_i(t,1^-)+C'\sum_{j=1}^n \int_0^1 p_ju_j(t,\xi)^2\dd \xi+C_1=-m_i\sum_{j=1}^nC_{ij}p_j u_i(t,1^-)+C'\sum_{j=1}^n \mom{\mu_j(t)}+C_1,
\end{align*}
for some constants $C',C_1>0$. We now employ step 1 and observe that, as in Proposition \ref{rem:necconv}, we have $\sum_{j=1}^nC_{ij}p_j\ge \tilde\lambda_0>0$:
\begin{align*}
\partial_t u_i(t,1^-)&\le -m_i\tilde \lambda_0 u_i(t,1^-)+C'',
\end{align*}
for $C''>0$. Gronwall's lemma yields -- thanks to $u_i(0,1^-)<\infty$ -- the existence of a constant $K>0$ such that $\max\limits_{j\in \{1,\ldots,n\}} u_j(t,1^-)\le K$ for all $t\ge 0$.

In analogy, we now consider those $i\in\{1,\ldots,n\}$ such that
\begin{align*}
u_i(t,0^+)&\le \min_{j\in\{1,\ldots,n\}}u_j(t,0^+)+R,
\end{align*}
yielding for each $j\in\{1,\ldots,n\}$ a partition $[0,1)=B_1^j\cup B_2^j$ with
\begin{align*}
B_1^j:=\{\xi\in[0,1):\,u_j(t,\xi)-u_i(t,0^+)>R\},\qquad B_2^j:=\{\xi\in [0,1):\,|u_j(t,\xi)-u_i(t,0^+)|\le R\}.
\end{align*}
Similarly as above, using the symmetry property (W3), we get
\begin{align*}
-\partial_t u_i(t,0^+)&\le -m_i\sum_{j=1}^nC_{ij}p_j (-u_i(t,0^+))+C'\sum_{j=1}^n \mom{\mu_j(t)}+C_1\le -m_i\tilde\lambda_0(-u_i(t,0^+))+C'',
\end{align*}
allowing to proceed as before.\\ Putting the bounds together finishes the proof: $\sup\limits_{t\ge 0}\max\limits_{j\in\{1,\ldots,n\}}\|u_j(t,\cdot)\|_{L^\infty([0,1])}\le K$.
\end{proof}
We thus know, given a confining potential, that the solution lives on a fixed compact interval. It is now a natural question to ask if, for absolutely continuous initial conditions, partial or total collapse of the support can occur in \emph{finite} time. This question is addressed in the following
\begin{prop}[Exclusion of finite-time blow-up]
\label{prop:noblowup}
Let $i\in\{1,\ldots,n\}$ be fixed, but arbitrary. Assume that for all $j\in\{1,\ldots,n\}$ the maps $W_{ij}'$ are Lipschitz-continuous. Suppose moreover that $\supp \mu^0_i$ is a (possibly unbounded) interval and $\mu^0_i$ is absolutely continuous w.r.t. the Lebesgue measure. Assume that its Lebesgue density is continuous on the interior of $\supp\mu^0_i$ and globally bounded. Then, $\mu_i(t)$ is absolutely continuous for all $t\ge 0$.
\end{prop}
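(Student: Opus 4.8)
The plan is to work with the inverse-distribution-function formulation \eqref{eq:u}, since absolute continuity of $\mu_i(t)$ is equivalent to $u_i(t,\cdot)$ having no jumps, i.e.\ being (strictly) increasing on the interior of its range, which in turn will follow from a lower bound on the ``spread'' $u_i(t,z_2)-u_i(t,z_1)$ for $z_1<z_2$ that does not degenerate in finite time. Concretely, I would fix $0\le z_1<z_2<1$ and estimate
\[
\fder{t}\bigl(u_i(t,z_2)-u_i(t,z_1)\bigr)=m_i\sum_{j=1}^n p_j\int_0^1\Bigl[W_{ij}'(u_j(t,\xi)-u_i(t,z_2))-W_{ij}'(u_j(t,\xi)-u_i(t,z_1))\Bigr]\dd\xi,
\]
and then use the Lipschitz continuity of each $W_{ij}'$, with common constant $L$, to bound the bracket in modulus by $L\,|u_i(t,z_2)-u_i(t,z_1)|$. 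Since $u_i(t,\cdot)$ is non-decreasing, $u_i(t,z_2)-u_i(t,z_1)\ge 0$, so we obtain the differential inequality
\[
\fder{t}\bigl(u_i(t,z_2)-u_i(t,z_1)\bigr)\ge -m_i L\Bigl(\sum_{j=1}^n p_j\Bigr)\bigl(u_i(t,z_2)-u_i(t,z_1)\bigr),
\]
and Gronwall's lemma gives $u_i(t,z_2)-u_i(t,z_1)\ge e^{-m_iL(\sum_j p_j)t}\,\bigl(u_i(0,z_2)-u_i(0,z_1)\bigr)$ for all $t\ge 0$.

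The next step is to convert this into absolute continuity of $\mu_i(t)$. The hypotheses on $\mu^0_i$ — absolutely continuous, with a continuous density on the interior of the interval $\supp\mu^0_i$ that is globally bounded by some $M$ — translate to a Lipschitz-type \emph{lower} bound on $u_i(0,\cdot)$: for $z_1<z_2$ in the relevant range, $u_i(0,z_2)-u_i(0,z_1)\ge p_i(z_2-z_1)/M$ (the cumulative distribution function $F_i(0,\cdot)$ has slope at most $M/p_i$, so its inverse has slope at least $p_i/M$). Combining with the Gronwall bound, for every $t$ we get
\[
u_i(t,z_2)-u_i(t,z_1)\ge \frac{p_i}{M}\,e^{-m_iL(\sum_j p_j)t}\,(z_2-z_1),
\]
uniformly on the interior of the range. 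This means $u_i(t,\cdot)$ is strictly increasing there — no flat intervals — hence $F_i(t,\cdot)$ has no jumps, i.e.\ $\mu_i(t)$ has no atoms; more strongly, the lower bound on the slope of $u_i(t,\cdot)$ yields an upper bound $M e^{m_iL(\sum_j p_j)t}/p_i$ on the slope of $F_i(t,\cdot)$, so $F_i(t,\cdot)$ is Lipschitz, hence absolutely continuous as a function, and therefore $\mu_i(t)$ is absolutely continuous with respect to Lebesgue measure with density bounded by $M e^{m_iL(\sum_j p_j)t}$.

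The main obstacle I anticipate is rigor at the level of the regularity of $t\mapsto u_i(t,z)$: equation \eqref{eq:u} holds in the sense that $u\in AC^2_{\mathrm{loc}}([0,\infty);L^2([0,1];\R^n))$ solves it, so the pointwise-in-$z$ differentiation used above needs justification — one should argue that for a.e.\ fixed $z$ the curve $t\mapsto u_i(t,z)$ is locally absolutely continuous and satisfies \eqref{eq:u} pointwise (this is standard for these inverse-distribution reformulations, cf.\ the references \cite{carrillo2011,raoul2012}, but must be invoked carefully), and then the Gronwall argument is applied for a.e.\ pair $(z_1,z_2)$ and extended to all pairs by monotonicity and right-continuity of $u_i(t,\cdot)$. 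A secondary, more technical point is handling the endpoints of $\supp\mu^0_i$ when this set is a half-line or all of $\R$: there the density bound is assumed global, so the estimate $u_i(0,z_2)-u_i(0,z_1)\ge p_i(z_2-z_1)/M$ still holds for all $z_1<z_2$ in $(0,1)$ mapping into the support, and no special care is needed beyond noting that $u_i(t,\cdot)$ stays finite on $(0,1)$, which is exactly the content (in the confined case) of Theorem \ref{prop:confsupp} or, in general, Proposition \ref{prop:finsupp} — but even without confinement the finite-speed bound suffices on each finite time interval. Once these regularity issues are dispatched, the proof is essentially the one-line Gronwall estimate above.
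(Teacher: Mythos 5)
Your proposal is correct and follows essentially the same route as the paper: differentiate $u_i(t,z_2)-u_i(t,z_1)$ via \eqref{eq:u}, use the Lipschitz bound on $W_{ij}'$ together with monotonicity of $u_i(t,\cdot)$ to get the one-sided Gronwall inequality, and propagate the initial lower bound $\gamma(0)$ (which you make explicit as $p_i/M$) to $\gamma(t)=e^{-\tilde C_i t}\gamma(0)$, concluding that $F_i(t,\cdot)$ is Lipschitz and hence $\mu_i(t)$ absolutely continuous. The paper's proof (adapted from Burger--Di Francesco) is identical in substance, using the componentwise constants $L_{ij}$ in place of your common $L$.
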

\begin{proof}
Our method of proof is an adaptation of the proof of \cite[Thm. 2.9]{burger2008} to the situation at hand. We show that for all $t\ge 0$, there exists $\gamma(t)>0$ such that for all $z\in[0,1)$ and all $h>0$ with $z+h<1$:
\begin{align}
\label{eq:stinc}
\frac1{h}(u_i(t,z+h)-u_i(t,z))&\ge \gamma(t)>0.
\end{align}
That is, $u_i(t,\cdot)$ is \emph{strictly} increasing at each $t\ge 0$. The assumptions on the initial datum above ensure that \eqref{eq:stinc} is true at $t=0$ with some $\gamma(0)>0$. If \eqref{eq:stinc} holds at a given $t_0$, the cumulative distribution function $F_i(t_0,\cdot)$ is Lipschitz-continuous, which implies absolute continuity of $\mu_i(t_0)$.

From \eqref{eq:u}, we get
\begin{align*}
\partial_t (u_i(t,z+h)-u_i(t,z))&=m_i\sum_{j=1}^n p_j\int_0^1\left[W_{ij}'(u_j(t,\xi)-u_i(t,z+h))-W_{ij}'(u_j(t,\xi)-u_i(t,z))\right]\dd\xi.
\end{align*}
Denote by $L_{ij}>0$ the Lipschitz constant of $W_{ij}'$. From the monotonicity $u_i(t,z+h)-u_i(t,z)\ge 0$, it follows that
\begin{align*}
\partial_t (u_i(t,z+h)-u_i(t,z))& \ge -m_i\sum_{j=1}^nL_{ij}p_j (u_i(t,z+h)-u_i(t,z)).
\end{align*}
We subsequently obtain for $\tilde C_i:=m_i\sum\limits_{j=1}^nL_{ij}p_j$ that $\partial_t [(u_i(t,z+h)-u_i(t,z))e^{\tilde C_i t}]\ge 0$, and hence 
\begin{align*}
\frac1{h}(u_i(t,z+h)-u_i(t,z))&\ge \frac1{h}e^{-\tilde C_i t}(u_i(0,z+h)-u_i(0,z))\ge e^{-\tilde C_i t}\gamma(0)>0.
\end{align*}
Letting $\gamma(t):=e^{-\tilde C_i t}\gamma(0)$, \eqref{eq:stinc} follows.
\end{proof}
Naturally, the above result does not extend to $t\to\infty$ since e.g. in the uniformly geodesically convex case, the solution collapses to a Dirac measure in the large-time limit.
\subsection{Long-time behaviour}\label{subsec:long}
We now analyse the long-time behaviour of the solution to \eqref{eq:pdend} in the non-uniformly convex case. 
\begin{thm}[Long-time behaviour]
\label{prop:ltb}
Assume that the solution $\mu$ to \eqref{eq:pdend} is \emph{uniformly confined}, i.e. there exists $K>0$ such that $\supp \mu_i(t)\subset [-K,K]$ holds for all $t\ge 0$ and all $i\in\{1,\ldots,n\}$ as in \eqref{eq:confsupp}. Moreover, suppose that the maps $W_{ij}'$ are Lipschitz-continuous on the interval $[-2K,2K]$ for all $i,j\in\{1,\ldots,n\}$. Set, for $t\ge 0$, $\ent^t:=\ent(\mu(t))$. The following holds:
\begin{enumerate}[(a)]
\item There exists $\ent^\infty\in\R$ such that
\begin{align}
\label{eq:entlim}
\lim_{t\to\infty} \ent^t&=\ent^\infty,
\end{align}
and
\begin{align}
\label{eq:disslim}
\lim_{t\to\infty}\left(\fder{t}\ent^t\right)&=0.
\end{align}
\item For each sequence $(t_k)_{k\in\N}$ in $(0,\infty)$ with $t_k\to\infty$ as $k\to\infty$, there exists a subsequence $(t_{k_l})_{l\in\N}$ and a steady state $\overline{\mu}\in\prb$ of \eqref{eq:pdend} such that for all $i\in\{1,\ldots,n\}$:
\begin{align}
\label{eq:W1c}
\lim_{l\to\infty}\W_1(\mu_i(t_{k_l}),\overline{\mu}_i)=0.
\end{align}
\end{enumerate}
Thus, the $\omega$-limit set of the dynamical system associated to \eqref{eq:pdend} can only contain steady states of \eqref{eq:pdend}.
\end{thm}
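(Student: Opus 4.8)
The plan is to work throughout with the inverse distribution functions $u=(u_1,\dots,u_n)$ solving \eqref{eq:u}. Under the uniform confinement hypothesis one has $\|u_i(t,\cdot)\|_{L^\infty([0,1])}\le K$ for all $t\ge 0$ and all $i$, so every argument of $W_{ij}'$ appearing in \eqref{eq:u} lies in $[-2K,2K]$. Writing $R_i[u](z):=m_i\sum_{j}p_j\int_0^1 W_{ij}'(u_j(\xi)-u_i(z))\dd\xi$ for the right-hand side of \eqref{eq:u}, the continuity of $W_{ij}'$ makes $R_i$ bounded, and the Lipschitz hypothesis makes it Lipschitz in $u$, with constants depending only on $K$, the $m_i$, the $p_j$ and the Lipschitz constants $L_{ij}$ of $W_{ij}'$ on $[-2K,2K]$. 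Hence $t\mapsto u_i(t,z)$ is Lipschitz in $t$ uniformly in $z$, and $v_i:=\partial_t u_i=R_i[u]$ is again Lipschitz in $t$ uniformly in $z$, in particular Lipschitz with values in $L^2([0,1])$.

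For part (a) I first express the energy along the flow in the transformed variables, $\ent^t=\tfrac12\sum_{i,j}p_ip_j\int_0^1\int_0^1 W_{ij}(u_i(t,z)-u_j(t,\xi))\dd z\dd\xi$, differentiate in $t$, and use the symmetries \textup{(W1)} and \textup{(W3)} to combine the two resulting terms; inserting \eqref{eq:u} produces the dissipation identity $\fder{t}\ent^t=-\mathcal D(t)$ with $\mathcal D(t):=\sum_i\frac{p_i}{m_i}\int_0^1|\partial_t u_i(t,z)|^2\dd z\ge 0$ (equivalently one may invoke the energy dissipation equality for the gradient flow from Theorem~\ref{thm:exun}). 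Thus $\ent^t$ is non-increasing; together with the lower bound $\ent^t\ge -C\sum_j p_j(1+K^2)$ coming from \eqref{eq:entquadr} and the confinement, this yields $\ent^t\downarrow\ent^\infty\in\R$, i.e. \eqref{eq:entlim}, and moreover $\int_0^\infty\mathcal D(t)\dd t=\ent^0-\ent^\infty<\infty$. To upgrade this to \eqref{eq:disslim}, I differentiate $\mathcal D$ in $t$: since $W_{ij}'$ is Lipschitz, $W_{ij}''$ exists a.e. with $|W_{ij}''|\le L_{ij}$, and the chain rule gives $\partial_t v_i(t,z)=m_i\sum_j p_j\int_0^1 W_{ij}''(u_j(t,\xi)-u_i(t,z))(v_j(t,\xi)-v_i(t,z))\dd\xi$ for a.e.\ $t$, whence $\|\partial_t v_i(t,\cdot)\|_{L^2}\le C\|v(t,\cdot)\|_{L^2([0,1];\R^n)}$ and therefore $|\mathcal D'(t)|\le C'\mathcal D(t)$ for a.e.\ $t$. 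This differential inequality combined with $\int_0^\infty\mathcal D<\infty$ forces $\mathcal D$ to be bounded and uniformly continuous on $[0,\infty)$, and a non-negative, integrable, uniformly continuous function must tend to $0$; this is \eqref{eq:disslim}.

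For part (b), fix $t_k\to\infty$. Each $u_i(t_k,\cdot)$ is non-decreasing with values in $[-K,K]$, so Helly's selection theorem provides a common subsequence $(t_{k_l})$ along which $u_i(t_{k_l},\cdot)\to\overline u_i$ at every continuity point of the non-decreasing limit $\overline u_i$, for each $i$; by dominated convergence this convergence also holds in $L^1([0,1])$ (indeed in every $L^p$, $p<\infty$). Since for finite measures on $\R$ the distance $\W_1$ equals the (mass-scaled) $L^1$-distance of the pseudo-inverses, this is precisely \eqref{eq:W1c}, with $\overline\mu_i$ the measure of mass $p_i$ whose pseudo-inverse is $\overline u_i$; passing to the limit in the mass constraint, in the center-of-mass identity \eqref{eq:uE} and in the bound $\mom{\overline\mu_j}\le p_jK^2$ shows $\overline\mu\in\prb$. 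Finally, by the Lipschitz bound for $R_i$ and the $L^1$-convergence, $R_i[u(t_{k_l},\cdot)]\to R_i[\overline u]$ in $L^1([0,1])$; but $R_i[u(t_{k_l},\cdot)]=\partial_t u_i(t_{k_l},\cdot)$ with $\|\partial_t u_i(t_{k_l},\cdot)\|_{L^2}^2\le\tfrac{m_i}{p_i}\mathcal D(t_{k_l})\to 0$ by part~(a), so $R_i[\overline u]=0$ a.e.\ for every $i$. This is exactly the stationary form of \eqref{eq:u}, i.e.\ (reversing the derivation that led from \eqref{eq:pdend} to \eqref{eq:u}) $\overline\mu$ is a steady state of \eqref{eq:pdend}. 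The concluding assertion on the $\omega$-limit set follows at once: any element of it is, along some sequence, a $\W_1$-limit of the flow, hence by part~(b) equals a steady state.

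I expect the genuine obstacle to be the second half of part~(a), namely promoting $\int_0^\infty\mathcal D<\infty$ to the pointwise decay $\mathcal D(t)\to 0$. This is where the Lipschitz hypothesis on the $W_{ij}'$ is truly needed (beyond mere well-posedness), since it is what provides the a priori control $|\mathcal D'(t)|\lesssim\mathcal D(t)$ and the resulting uniform continuity of the dissipation that excludes oscillations; the associated measure-theoretic point (chain rule for the a.e.-defined $W_{ij}''$ composed with a curve that is only Lipschitz in $t$) has to be handled with some care, in the spirit of \cite{burger2008}. By contrast, the compactness step in (b) via Helly and the identification of the limit as a steady state are comparatively routine once $\mathcal D(t)\to 0$ is established.
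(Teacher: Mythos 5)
Your proposal is correct and follows the same overall architecture as the paper: the dissipation identity in the pseudo-inverse variables, monotonicity plus a confinement-induced lower bound for \eqref{eq:entlim}, BV/Helly compactness and the identification of $\W_1$ with the $L^1$-distance of pseudo-inverses for \eqref{eq:W1c}, and passage to the limit in the dissipation to identify $\overline{\mu}$ as a steady state. The one place where you genuinely deviate is the proof of \eqref{eq:disslim}. The paper bounds the \emph{second} time derivative of $\ent^t$ uniformly (using $|W_{ij}''|\le L_{ij}$ a.e.\ on $[-2K,2K]$) and then deduces $\fder{t}\ent^t\to 0$ from a Taylor-type interpolation identity with a cleverly chosen window $\tau(t)\sim\sqrt{\ent^{t/2}-\ent^\infty}$; this yields the quantitative decay $|\fder{t}\ent^t|\lesssim\sqrt{\ent^{t/2}-\ent^\infty}$. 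You instead show $|\mathcal D'|\le C\mathcal D$ (equivalently, that $\mathcal D$ is bounded and Lipschitz in $t$, which already follows from $R_i$ being bounded and Lipschitz on the confined set without invoking the a.e.\ chain rule for $W_{ij}''$ that you flag as delicate) and conclude by Barbalat's lemma from $\int_0^\infty\mathcal D<\infty$. Both routes use the Lipschitz hypothesis in the same essential way; the paper's version gives a rate, yours is marginally more robust since uniform continuity of the dissipation suffices and the second-derivative computation can be avoided entirely. The remaining steps of your part (b) coincide with the paper's argument up to presentation.
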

\begin{proof}
We proceed similarly to the proof of \cite[Prop. 1]{raoul2012} and observe that along the solution $\mu$, the dissipation of $\ent$ reads
\begin{align}
\label{eq:d1n}
\begin{split}
\fder{t}\ent^t&=-\sum_{i=1}^n m_i\int_\R\left(\sum_{j=1}^n W_{ij}'\ast\mu_j(t)\right)^2\dd\mu_i(t)\\&=-\sum_{i=1}^nm_ip_i\int_0^1\left(\sum_{j=1}^np_j\int_0^1 W_{ij}'(u_i(t,z)-u_j(t,\xi))\dd\xi\right)^2\dd z,
\end{split}
\end{align}
which is non-positive. By \eqref{eq:confsupp}, all $u_i$ are bounded in time and space by the constant $K$. Since $W_{ij}'$ is Lipschitz-continuous, it is differentiable almost everywhere on $[-2K,2K]$. So, another differentiation of the dissipation w.r.t. $t$ shows
\begin{align*}
\frac{\dd^2}{\dd t^2}\ent^t&=-2\sum_{i=1}^nm_ip_i\int_0^1 \left(\sum_{j=1}^n\int_0^1p_jW_{ij}'(u_i(t,z)-u_j(t,\zeta))\dd\zeta\right)\\
&\quad\cdot\left(\sum_{j=1}^np_j\int_0^1W_{ij}''(u_i(t,z)-u_j(t,\xi))\right.\\&\quad\qquad\cdot\left.\left[\sum_{k=1}^n m_ip_k\int_0^1 W_{ik}'(u_k(t,\zeta)-u_i(t,z))\dd\zeta-\sum_{k=1}^n m_jp_k \int_0^1W_{jk}'(u_k(t,\zeta)-u_j(t,\xi))\dd \zeta\right]\dd\xi\right)\dd z.
\end{align*}
By elementary estimates, using in particular that $|W_{ij}''(z)|\le L_{ij}$ a.e. on $[-2K,2K]$ by Lipschitz-continuity, we find
\begin{align}
\label{eq:d2bd}
\sup_{t\ge 0}\left|\frac{\dd^2}{\dd t^2}\ent^t\right|&\le C_2,
\end{align}
for some $C_2>0$. Furthermore, it is easy to conclude from (W2) and \eqref{eq:confsupp} that 
\begin{align}
\label{eq:d0bd}
\inf_{t\ge 0}\ent^t &\ge -C_0,
\end{align}
for another constant $C_0>0$. Putting \eqref{eq:d1n} and \eqref{eq:d0bd} together yields the existence of $\ent^\infty\in\R$ such that \eqref{eq:entlim} holds. We now use \eqref{eq:d2bd} to prove \eqref{eq:disslim}:

Define for $t>\frac{2}{C_2}\sqrt{\ent^0-\ent^\infty}>0$ the quantity $\tau(t):=\frac1{C_2}\sqrt{\ent^{t/2}-\ent^\infty}>0$. Since $\ent^t$ is nonincreasing, we also have $\tau(t)<\frac{t}{2}$. Moreover,
\begin{align*}
\fder{t}\ent^t&=\frac1{\tau(t)}(\ent^t-\ent^\infty)-\frac1{\tau(t)}(\ent^{\tau(t)}-\ent^\infty)+\frac1{\tau(t)}\int_{t-\tau(t)}^t\int_s^t\frac{\dd^2}{\dd\sigma^2}\ent^\sigma \dd\sigma\dd s,
\end{align*}
from which with \eqref{eq:d2bd} and \eqref{eq:entlim} the desired result \eqref{eq:disslim} follows:
\begin{align*}
\left|\fder{t}\ent^t\right|&\le \frac2{\tau(t)}(\ent^{t/2}-\ent^\infty)+\frac1{\tau(t)}C_2\tau(t)^2=(2C_2+1)\sqrt{\ent^{t/2}-\ent^\infty}\stackrel{t\to\infty}{\longrightarrow}0.
\end{align*}
For part (b), let a sequence of time points $t_k\to\infty$ be given. The family $u_i(t_k,\cdot)$ ($k\in\N$; $i\in\{1,\ldots,n\}$) of nondecreasing functions is uniformly bounded in $L^\infty([0,1])$ (by the constant $K$) -- hence bounded in the space $BV([0,1])$. Thus, there exist a subsequence $(t_{k_l})_{l\in\N}$ and nondecreasing maps $\overline{u}_1,\ldots,\overline{u}_n$ with $\|\overline{u}_i\|_{L^\infty([0,1])}\le K$ for all $i\in\{1,\ldots,n\}$, such that $u_i(t_{k_l},\cdot)$ converges to $\overline{u}_i$ in $L^1([0,1])$ and almost everywhere on $[0,1]$, as $l\to\infty$ (for details, see e.g. \cite{ambrosio2000}). The corresponding measure $\overline{\mu}$ belongs to $\prb$ thanks to the dominated convergence theorem. It remains to show that $\overline{\mu}$ is a steady state of system \eqref{eq:pdend}. Define
\begin{align*}
\omega:=-\sum_{i=1}^nm_ip_i\int_0^1 \left(\sum_{j=1}^n p_j \int_0^1 W_{ij}'(\overline{u}_i(z)-\overline{u}_j(\xi))\dd\xi\right)^2\dd z.
\end{align*}
Elementary calculations -- using e.g. Lemma \ref{lem:grad} -- show
\begin{align*}
\left|\fder{t}\ent^{t_{k_l}}-\omega\right|&\le C_0\sum_{i=1}^n\sum_{j=1}^n\int_0^1\int_0^1\left|W_{ij}'(u_i(t_{k_l},z)-u_j(t_{k_l},\xi))-W_{ij}'(\overline{u}_i(z)-\overline{u}_j(\xi))\right|\dd\xi\dd z,
\end{align*}
for a suitable constant $C_0>0$. The Lipschitz-continuity of the $W_{ij}'$ on $[-2K,2K]$, the triangle inequality and \eqref{eq:W1c} then imply
\begin{align*}
\left|\fder{t}\ent^{t_{k_l}}-\omega\right|&\le C_0\sum_{i=1}^n\sum_{j=1}^n L_{ij}\big(\|u_i(t_{k_l},\cdot)-\overline{u}_i\|_{L^1([0,1])}+\|u_j(t_{k_l},\cdot)-\overline{u}_j\|_{L^1([0,1])}\big)\stackrel{l\to\infty}{\longrightarrow}0.
\end{align*}
Hence, because of \eqref{eq:disslim}, $\omega=0$. Specifically, this means that for each $i\in\{1,\ldots,n\}$ and almost every $x\in \supp\overline{\mu}_i$, the following holds:
\begin{align*}
\sum_{j=1}^n\int_\R W_{ij}'(x-y)\dd\overline{\mu}_j(y)&=0.
\end{align*}
So, $\overline{\mu}$ is a solution to \eqref{eq:pdend} and the proof is complete.
\end{proof}
\begin{remark}
The result of Theorem \ref{prop:ltb} does neither yield uniqueness of steady states of \eqref{eq:pdend} nor convergence of the entire curve $\mu$ to some specific object as $t\to\infty$. If there only exists the trivial steady state $\mu^\infty$ from Corollary \ref{coro:posconv} in the set of those elements from $\prb$ with support contained in $[-K,K]$, then Theorem \ref{prop:ltb} implies 
\begin{align*}
\lim_{t\to\infty}\W_1(\mu_i(t),\mu^\infty_i)=0\qquad\forall i\in\{1,\ldots,n\},
\end{align*}
without obtaining any specific rate of convergence.
\end{remark}
If the potential is not confining, convergence may not occur. However, we might observe a \emph{$\delta$-separation} phenomenon: The support of each component $\mu_i$ collapses to a single (but not necessarily fixed) point as $t\to\infty$.
\begin{prop}[$\delta$-separation]\label{prop:delta}
Let $i\in\{1,\ldots,n\}$ be fixed, but arbitrary. Assume that the support of $\mu_i^0$ is compact and that $S_i:=\sum\limits_{j=1}^n\kappa_{ij}p_j>0$ holds. Then,
\begin{align*}
\diam\supp\mu_i(t)&\le e^{-m_iS_i t}\diam\supp\mu_i^0.
\end{align*}
That is, the support of $\mu_i$ contracts at exponential speed.
\end{prop}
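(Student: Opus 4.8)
The argument rests on the transformed system \eqref{eq:u} together with a Grönwall estimate for the ``width'' of the profile $u_i(t,\cdot)$. For fixed $\eps\in(0,\tfrac12)$ I set
\[
D_i^\eps(t):=u_i(t,1-\eps)-u_i(t,\eps),
\]
which is nonnegative because $u_i(t,\cdot)$ is nondecreasing. By the definition of the pseudo-inverse \eqref{eq:invu} one has the elementary bounds $\diam\supp\mu_i(t)\le u_i(t,1^-)-u_i(t,0^+)$ and $u_i(0,1^-)-u_i(0,0^+)\le\diam\supp\mu_i^0$, and, by Proposition~\ref{prop:finsupp}, the monotone limits $u_i(t,1^-)=\lim_{\eps\searrow0}u_i(t,1-\eps)$ and $u_i(t,0^+)=\lim_{\eps\searrow0}u_i(t,\eps)$ are finite for each $t\ge0$. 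Hence it suffices to prove $D_i^\eps(t)\le e^{-m_iS_it}D_i^\eps(0)$ and then let $\eps\searrow0$.

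To obtain the differential inequality I would subtract the two instances of \eqref{eq:u} at $z=1-\eps$ and $z=\eps$, giving
\[
\partial_t D_i^\eps(t)=m_i\sum_{j=1}^n p_j\int_0^1\Big[W_{ij}'\big(u_j(t,\xi)-u_i(t,1-\eps)\big)-W_{ij}'\big(u_j(t,\xi)-u_i(t,\eps)\big)\Big]\dd\xi,
\]
and then exploit assumption (W5): the map $z\mapsto W_{ij}'(z)-\kappa_{ij}z$ is nondecreasing, and since $u_j(t,\xi)-u_i(t,1-\eps)\le u_j(t,\xi)-u_i(t,\eps)$, evaluating this monotonicity at these two arguments yields, pointwise in $\xi$,
\[
W_{ij}'\big(u_j(t,\xi)-u_i(t,1-\eps)\big)-W_{ij}'\big(u_j(t,\xi)-u_i(t,\eps)\big)\le \kappa_{ij}\big(u_i(t,\eps)-u_i(t,1-\eps)\big)=-\kappa_{ij}D_i^\eps(t).
\]
Inserting this and recalling $S_i=\sum_{j=1}^n\kappa_{ij}p_j$ gives $\partial_t D_i^\eps(t)\le -m_iS_i D_i^\eps(t)$, so $t\mapsto e^{m_iS_it}D_i^\eps(t)$ is nonincreasing and $D_i^\eps(t)\le e^{-m_iS_it}D_i^\eps(0)$. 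Passing to the limit $\eps\searrow0$ and chaining with the elementary bounds above produces $\diam\supp\mu_i(t)\le e^{-m_iS_it}\diam\supp\mu_i^0$, as claimed.

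The only nontrivial point is the rigorous handling of $t\mapsto u_i(t,z)$ for fixed $z\in(0,1)$—namely, the differentiability in $t$, the pointwise-in-$z$ validity of \eqref{eq:u}, and the interchange of the $t$- and $\eps$-limits—which is done exactly as in the proof of Proposition~\ref{prop:finsupp}, relying on $u\in AC^2_{\mathrm{loc}}([0,\infty);L^2([0,1];\R^n))$ and on the monotone convergences $u_i(t,1-\eps)\uparrow u_i(t,1^-)$, $u_i(t,\eps)\downarrow u_i(t,0^+)$ together with the finite-speed bound. I expect this technical bookkeeping—rather than the one-line convexity estimate, which is the heart of the matter—to be the main (though entirely routine) obstacle.
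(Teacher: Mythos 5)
Your proof is correct and follows essentially the same route as the paper: differentiate the width of the profile via \eqref{eq:u} and apply the monotonicity of $z\mapsto W_{ij}'(z)-\kappa_{ij}z$ from (W5) to obtain $\partial_t(\text{width})\le -m_iS_i\,(\text{width})$, then conclude by Gronwall. The only difference is that you work at the levels $\eps$ and $1-\eps$ and pass to the limit, whereas the paper argues directly with $u_i(t,0^+)$ and $u_i(t,1^-)$; this is a harmless (if anything, slightly more careful) variation.
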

\begin{proof}
Recall that $\diam\supp\mu_i(t)=u_i(t,1^-)-u_i(t,0^+)$. We have
\begin{align*}
&\partial_t(u_i(t,1^-)-u_i(t,0^+))=\sum_{j=1}^n m_ip_j\int_0^1\left[W_{ij}'(u_j(t,\xi)-u_i(t,1^-))-W_{ij}'(u_j(t,\xi)-u_i(t,0^+))\right]\dd\xi\\
&\le \sum_{j=1}^n m_ip_j\kappa_{ij}\int_0^1\left[(u_j(t,\xi)-u_i(t,1^-))-(u_j(t,\xi)-u_i(t,0^+))\right]\dd\xi=-m_i S_i (u_i(t,1^-)-u_i(t,0^+)),
\end{align*}
the second-to-last step being a consequence of $\kappa_{ij}$-convexity (W5). Applying Gronwall's lemma completes the proof.
\end{proof}
In the regime where Proposition \ref{prop:delta} is applicable for all $i\in\{1,\ldots,n\}$, system \eqref{eq:pdend} behaves asymptotically as $t\to\infty$ like the particle system \eqref{eq:particle} in the case of only one (heavy) particle for each component ($N_i=1$ for all $i$). Obviously, by Proposition \ref{rem:necconv}, the condition $S_i>0$ above is met in the scenario with uniformly geodesically convex energy. This enables us to improve the convergence result from Section \ref{subsec:gfsol} in one spatial dimension for compactly supported initial data:
\begin{prop}[The uniformly convex case in one spatial dimension]
Assume that the criterion for geodesic convexity \eqref{eq:convcond0} yields $\lambda_0>0$ and suppose that $\mu^0$ has compact support. Then, for each $i\in\{1,\ldots,n\}$,
\begin{align*}
\lim_{t\to\infty}\W_\infty(\mu_i(t),\mu^\infty_i)&=0,
\end{align*}
where $\mu^\infty$ is the unique steady state from Corollary \ref{coro:posconv}.
\end{prop}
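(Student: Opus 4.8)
The plan is to combine two facts already at our disposal: the exponential shrinking of each support from Proposition~\ref{prop:delta}, and the quantitative convergence $\W_\prbb(\mu(t),\mu^\infty)\to 0$ from Corollary~\ref{coro:posconv}. First I would record the elementary identity
\begin{align*}
\W_\infty(\mu_i(t),\mu^\infty_i)&=\sup_{x\in\supp\mu_i(t)}|x-x^\infty|,
\end{align*}
which holds because $\mu^\infty_i=p_i\delta_{x^\infty}$ is a Dirac mass, so that the product measure is the only coupling of $\mu_i(t)$ and $\mu^\infty_i$. Thus it suffices to show that $\supp\mu_i(t)$ contracts to the single point $x^\infty$ as $t\to\infty$.

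Contraction to \emph{some} point is immediate: since $\lambda_0>0$ in \eqref{eq:convcond0}, Proposition~\ref{rem:necconv} yields $S_i=\sum_{j=1}^n\kappa_{ij}p_j>0$ for every $i$, so Proposition~\ref{prop:delta} applies and gives $\delta_i(t):=\diam\supp\mu_i(t)\le e^{-m_iS_it}\diam\supp\mu_i^0\to 0$. It then remains to identify the limiting point. I would fix the left endpoint $\ell_i(t):=u_i(t,0^+)$ of $\supp\mu_i(t)$; from $\supp\mu_i(t)\subset[\ell_i(t),\ell_i(t)+\delta_i(t)]$ one gets $\int_\R|x-\ell_i(t)|^2\dd\mu_i(t,x)\le p_i\,\delta_i(t)^2$, i.e. $\W_2(\mu_i(t),p_i\delta_{\ell_i(t)})\le\sqrt{p_i}\,\delta_i(t)$, where $\W_2$ denotes the quadratic optimal transport cost between the $i$-th components. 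On the other hand, the very definition \eqref{eq:metric} of $\W_\prbb$ gives $\W_2(\mu_i(t),\mu^\infty_i)\le\sqrt{m_i}\,\W_\prbb(\mu(t),\mu^\infty)$, which tends to $0$ by Corollary~\ref{coro:posconv}. By the triangle inequality for $\W_2$ on finite measures of common mass $p_i$,
\begin{align*}
\sqrt{p_i}\,|\ell_i(t)-x^\infty|=\W_2(p_i\delta_{\ell_i(t)},p_i\delta_{x^\infty})\le\W_2(p_i\delta_{\ell_i(t)},\mu_i(t))+\W_2(\mu_i(t),\mu^\infty_i)\stackrel{t\to\infty}{\longrightarrow}0,
\end{align*}
so $\ell_i(t)\to x^\infty$. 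Combining the two estimates then yields $\W_\infty(\mu_i(t),\mu^\infty_i)\le|\ell_i(t)-x^\infty|+\delta_i(t)\to 0$, as claimed.

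The argument is mostly bookkeeping and I do not anticipate a genuine obstacle; the only steps worth a line of justification are the identification of $\W_\infty$ against a Dirac measure with the sup-distance to the support, and the fact that $\W_\prbb$ dominates each componentwise $\W_2$-distance, both of which are standard. (Keeping track of the two exponential rates above would in fact give convergence at rate $\min(\lambda_0,m_iS_i)$, although we do not record an explicit rate here.)
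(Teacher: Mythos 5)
Your proof is correct, and it takes a genuinely different route from the paper's. The paper works entirely with the pseudo-inverses: it writes $\W_\infty(\mu_i(t),\mu^\infty_i)=\|u_i(t,\cdot)-x^\infty\|_{L^\infty([0,1])}=\max(|u_i(t,1^-)-x^\infty|,|u_i(t,0^+)-x^\infty|)$ and then argues by contradiction -- if $u_i(t_k,1^-)$ stayed $\eps$ away from $x^\infty$, one extracts (using the uniform confinement from Theorem \ref{prop:confsupp}) a subsequence along which $u_i(t_{k_l},1^-)\to x^\infty+\omega$ while $u_i(t_{k_l},z)\to x^\infty$ for a.e.\ $z$ (from the $L^2$-convergence of Corollary \ref{coro:posconv}); the shrinking diameter then forces $u_i(t_{k_l},0^+)\to x^\infty+\omega$ and monotonicity gives $\omega=0$, a contradiction. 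You instead give a direct quantitative estimate: pin down the left endpoint $\ell_i(t)$ by a $\W_2$-triangle inequality through $p_i\delta_{\ell_i(t)}$, using $\W_2(\mu_i(t),p_i\delta_{\ell_i(t)})\le\sqrt{p_i}\,\delta_i(t)$ and $\W_2(\mu_i(t),\mu_i^\infty)\le\sqrt{m_i}\,\W_\prbb(\mu(t),\mu^\infty)$. Both arguments rest on the same three ingredients (Corollary \ref{coro:posconv}, Proposition \ref{rem:necconv} to activate Proposition \ref{prop:delta}), but yours is constructive, bypasses the compactness/subsequence extraction and the appeal to Theorem \ref{prop:confsupp}, and -- as you note in your parenthetical -- actually yields an explicit exponential rate $\min(\lambda_0,m_iS_i)$ for the $\W_\infty$-convergence, which strengthens the statement: the paper explicitly remarks that its soft argument loses the rate.
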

In view of Corollary \ref{coro:posconv}, we obtain convergence w.r.t. the stronger topology of the $L^\infty$-Wasserstein distance, but lose the exponential rate of convergence.
\begin{proof}
Fix $i\in\{1,\ldots,n\}$. Since $\lambda_0>0$ and $\supp\mu^0_i$ is compact, we know from Corollary \ref{coro:posconv}, Theorem \ref{prop:confsupp} and Proposition \ref{prop:delta} that $\|u_i(t,\cdot)-x^\infty\|_{L^2([0,1])}\to 0$ as $t\to\infty$, $\|u_i(t,\cdot)\|_{L^\infty([0,1])}\le K$ for all $t\ge 0$ and $u_i(t,1^-)-u_i(t,0^+)\to 0$ as $t\to\infty$. Obviously, if $\lim\limits_{t\to\infty}u_i(t,1^-)=x^\infty$ holds, the desired result follows immediately from
\begin{align*}
\|u_i(t,\cdot)-x^\infty\|_{L^\infty([0,1])}=\max(|u_i(t,1^-)-x^\infty|,|u_i(t,0^+)-x^\infty|),
\end{align*} 
since then also $\lim\limits_{t\to\infty}u_i(t,0^+)=x^\infty$ holds. So, assume that $u_i(t,1^-)$ does \emph{not} converge to $x^\infty$ as $t\to\infty$. Then, there exists $\eps>0$ and a sequence $t_k\to\infty$ such that
\begin{align}
\label{eq:notconverge}
|u_i(t_k,1^-)-x^\infty|&\ge \eps\qquad\forall k\in\N.
\end{align}
Thanks to the observations above, there exist a subsequence $(t_{k_l})_{l\in\N}$ and $\omega\in\R$ such that
\begin{align*}
\lim_{l\to\infty} (u_i(t_{k_l},1^-)-x^\infty)&=\omega,~\text{and}~\lim_{l\to\infty} u_i(t_{k_l},z)=x^\infty~\text{for a.e. }z\in(0,1).
\end{align*}
Immediately, it follows that $\lim\limits_{l\to\infty} u_i(t_{k_l},0^+)=x^\infty+\omega$ and consequently $\omega=0$ by monotonicity. But $\omega=0$ is a contradiction to \eqref{eq:notconverge}.
\end{proof}
%


\subsection*{Acknowledgement}
This research has been supported by the German Research Foundation (DFG), SFB TRR 109. The author thanks Daniel Matthes for helpful discussion and comments.

\bibliographystyle{abbrv}
\bibliography{refint}

\begin{thebibliography}{10}

\bibitem{ambrosio2000}
L.~Ambrosio, N.~Fusco, and D.~Pallara.
\newblock {\em Functions of bounded variation and free discontinuity problems}.
\newblock Oxford Mathematical Monographs. The Clarendon Press Oxford University
  Press, New York, 2000.

\bibitem{savare2008}
L.~Ambrosio, N.~Gigli, and G.~Savar{\'e}.
\newblock {\em Gradient flows in metric spaces and in the space of probability
  measures}.
\newblock Lectures in Mathematics ETH Z\"urich. Birkh\"auser Verlag, Basel,
  second edition, 2008.

\bibitem{balague2014}
D.~Balagu{\'e}, J.~A. Carrillo, and Y.~Yao.
\newblock Confinement for repulsive-attractive kernels.
\newblock {\em Discrete Contin. Dyn. Syst. Ser. B}, 19(5):1227--1248, 2014.

\bibitem{benedetto1997}
D.~Benedetto, E.~Caglioti, and M.~Pulvirenti.
\newblock A kinetic equation for granular media.
\newblock {\em RAIRO Mod\'el. Math. Anal. Num\'er.}, 31(5):615--641, 1997.

\bibitem{bertozzi2010}
A.~L. Bertozzi and J.~Brandman.
\newblock Finite-time blow-up of ${L}^\infty$-weak solutions of an aggregation
  equation.
\newblock {\em Communications in Mathematical Sciences}, 8(1):45--65, 03 2010.

\bibitem{bertozzi2009}
A.~L. Bertozzi, J.~A. Carrillo, and T.~Laurent.
\newblock Blow-up in multidimensional aggregation equations with mildly
  singular interaction kernels.
\newblock {\em Nonlinearity}, 22(3):683--710, 2009.

\bibitem{bertozzi2007}
A.~L. Bertozzi and T.~Laurent.
\newblock Finite-time blow-up of solutions of an aggregation equation in
  {$\mathbf{R}^n$}.
\newblock {\em Comm. Math. Phys.}, 274(3):717--735, 2007.

\bibitem{bertozzi2011}
A.~L. Bertozzi, T.~Laurent, and J.~Rosado.
\newblock {$L^p$} theory for the multidimensional aggregation equation.
\newblock {\em Comm. Pure Appl. Math.}, 64(1):45--83, 2011.

\bibitem{biler2009}
P.~Biler, G.~Karch, and P.~Lauren{\c{c}}ot.
\newblock Blowup of solutions to a diffusive aggregation model.
\newblock {\em Nonlinearity}, 22(7):1559--1568, 2009.

\bibitem{bcc2012}
A.~Blanchet, E.~A. Carlen, and J.~A. Carrillo.
\newblock Functional inequalities, thick tails and asymptotics for the critical
  mass {P}atlak-{K}eller-{S}egel model.
\newblock {\em J. Funct. Anal.}, 262(5):2142--2230, 2012.

\bibitem{blanchet2006}
A.~Blanchet, J.~Dolbeault, and B.~Perthame.
\newblock Two-dimensional {K}eller-{S}egel model: optimal critical mass and
  qualitative properties of the solutions.
\newblock {\em Electron. J. Differential Equations}, pages No. 44, 32 pp.
  (electronic), 2006.

\bibitem{bodnar2006}
M.~Bodnar and J.~J.~L. Vel{\'a}zquez.
\newblock An integro-differential equation arising as a limit of individual
  cell-based models.
\newblock {\em J. Differential Equations}, 222(2):341--380, 2006.

\bibitem{burger2007}
M.~Burger, V.~Capasso, and D.~Morale.
\newblock On an aggregation model with long and short range interactions.
\newblock {\em Nonlinear Anal. Real World Appl.}, 8(3):939--958, 2007.

\bibitem{burger2008}
M.~Burger and M.~Di~Francesco.
\newblock Large time behavior of nonlocal aggregation models with nonlinear
  diffusion.
\newblock {\em Netw. Heterog. Media}, 3(4):749--785, 2008.

\bibitem{carrillo2014gr}
J.~A. Ca{\~n}izo, J.~A. Carrillo, and F.~S. Patacchini.
\newblock Existence of compactly supported global minimisers for the
  interaction energy.
\newblock {\em Arch. Ration. Mech. Anal.}, 217(3):1197--1217, 2015.

\bibitem{canizo2011}
J.~A. Ca{\~n}izo, J.~A. Carrillo, and J.~Rosado.
\newblock A well-posedness theory in measures for some kinetic models of
  collective motion.
\newblock {\em Math. Models Methods Appl. Sci.}, 21(3):515--539, 2011.

\bibitem{carrillo2014}
J.~A. Carrillo, D.~Castorina, and B.~Volzone.
\newblock Ground states for diffusion dominated free energies with logarithmic
  interaction.
\newblock {\em SIAM J. Math. Anal.}, 47(1):1--25, 2015.

\bibitem{carrillo2011}
J.~A. Carrillo, M.~Di~Francesco, A.~Figalli, T.~Laurent, and D.~Slep{\v{c}}ev.
\newblock Global-in-time weak measure solutions and finite-time aggregation for
  nonlocal interaction equations.
\newblock {\em Duke Math. J.}, 156(2):229--271, 2011.

\bibitem{carrillo2012}
J.~A. Carrillo, M.~Di~Francesco, A.~Figalli, T.~Laurent, and D.~Slep{\v{c}}ev.
\newblock Confinement in nonlocal interaction equations.
\newblock {\em Nonlinear Anal.}, 75(2):550--558, 2012.

\bibitem{carrillo2009}
J.~A. Carrillo, M.~R. D'Orsogna, and V.~Panferov.
\newblock Double milling in self-propelled swarms from kinetic theory.
\newblock {\em Kinet. Relat. Models}, 2(2):363--378, 2009.

\bibitem{carrillo2014non}
J.~A. Carrillo, S.~Lisini, and E.~Mainini.
\newblock Gradient flows for non-smooth interaction potentials.
\newblock {\em Nonlinear Anal.}, 100:122--147, 2014.

\bibitem{cmv2003}
J.~A. Carrillo, R.~J. McCann, and C.~Villani.
\newblock Kinetic equilibration rates for granular media and related equations:
  entropy dissipation and mass transportation estimates.
\newblock {\em Rev. Mat. Iberoamericana}, 19(3):971--1018, 2003.

\bibitem{cmv2006}
J.~A. Carrillo, R.~J. McCann, and C.~Villani.
\newblock Contractions in the 2-{W}asserstein length space and thermalization
  of granular media.
\newblock {\em Arch. Ration. Mech. Anal.}, 179(2):217--263, 2006.

\bibitem{carrillo2010}
J.~A. Carrillo and J.~Rosado.
\newblock Uniqueness of bounded solutions to aggregation equations by optimal
  transport methods.
\newblock In {\em European {C}ongress of {M}athematics}, pages 3--16. Eur.
  Math. Soc., Z\"urich, 2010.

\bibitem{carrillo2004}
J.~A. Carrillo and G.~Toscani.
\newblock Wasserstein metric and large-time asymptotics of nonlinear diffusion
  equations.
\newblock In {\em New trends in mathematical physics}, pages 234--244. World
  Sci. Publ., Hackensack, NJ, 2004.

\bibitem{colombo2012}
R.~M. Colombo and M.~L{\'e}cureux-Mercier.
\newblock Nonlocal crowd dynamics models for several populations.
\newblock {\em Acta Math. Sci. Ser. B Engl. Ed.}, 32(1):177--196, 2012.

\bibitem{crippa2013}
G.~Crippa and M.~L{\'e}cureux-Mercier.
\newblock Existence and uniqueness of measure solutions for a system of
  continuity equations with non-local flow.
\newblock {\em NoDEA Nonlinear Differential Equations Appl.}, 20(3):523--537,
  2013.

\bibitem{difranc2013}
M.~Di~Francesco and S.~Fagioli.
\newblock Measure solutions for non-local interaction {PDE}s with two species.
\newblock {\em Nonlinearity}, 26(10):2777--2808, 2013.

\bibitem{dif2016}
M.~Di~Francesco and S.~Fagioli.
\newblock A nonlocal swarm model for predators--prey interactions.
\newblock {\em Math. Models Methods Appl. Sci.}, 26(2):319--355, 2016.

\bibitem{evans2010}
L.~C. Evans.
\newblock {\em Partial differential equations}, volume~19 of {\em Graduate
  Studies in Mathematics}.
\newblock American Mathematical Society, Providence, second edition, 2010.

\bibitem{fellner2010}
K.~Fellner and G.~Raoul.
\newblock Stable stationary states of non-local interaction equations.
\newblock {\em Math. Models Methods Appl. Sci.}, 20(12):2267--2291, 2010.

\bibitem{fellner2011}
K.~Fellner and G.~Raoul.
\newblock Stability of stationary states of non-local equations with singular
  interaction potentials.
\newblock {\em Math. Comput. Modelling}, 53(7-8):1436--1450, 2011.

\bibitem{giacomin2000}
G.~Giacomin, J.~L. Lebowitz, and R.~Marra.
\newblock Macroscopic evolution of particle systems with short- and long-range
  interactions.
\newblock {\em Nonlinearity}, 13(6):2143--2162, 2000.

\bibitem{golse2003}
F.~Golse.
\newblock The mean-field limit for the dynamics of large particle systems.
\newblock In {\em Journ\'ees ``\'{E}quations aux {D}\'eriv\'ees
  {P}artielles''}, pages Exp. No. IX, 1--47. Univ. Nantes, Nantes, 2003.

\bibitem{jko1998}
R.~Jordan, D.~Kinderlehrer, and F.~Otto.
\newblock The variational formulation of the {F}okker-{P}lanck equation.
\newblock {\em SIAM J. Math. Anal.}, 29(1):1--17, 1998.

\bibitem{kang2009}
K.~Kang, B.~Perthame, A.~Stevens, and J.~J.~L. Vel{\'a}zquez.
\newblock An integro-differential equation model for alignment and
  orientational aggregation.
\newblock {\em J. Differential Equations}, 246(4):1387--1421, 2009.

\bibitem{kolokonikov2013}
T.~Kolokolnikov, Y.~Huang, and M.~Pavlovski.
\newblock Singular patterns for an aggregation model with a confining
  potential.
\newblock {\em Phys. D}, 260:65--76, 2013.

\bibitem{laurent2007}
T.~Laurent.
\newblock Local and global existence for an aggregation equation.
\newblock {\em Comm. Partial Differential Equations}, 32(10-12):1941--1964,
  2007.

\bibitem{li2004}
H.~Li and G.~Toscani.
\newblock Long-time asymptotics of kinetic models of granular flows.
\newblock {\em Arch. Ration. Mech. Anal.}, 172(3):407--428, 2004.

\bibitem{luckhaus2012}
S.~Luckhaus, Y.~Sugiyama, and J.~J.~L. Vel{\'a}zquez.
\newblock Measure valued solutions of the 2{D} {K}eller-{S}egel system.
\newblock {\em Arch. Ration. Mech. Anal.}, 206(1):31--80, 2012.

\bibitem{mccann1997}
R.~J. McCann.
\newblock A convexity principle for interacting gases.
\newblock {\em Adv. Math.}, 128(1):153--179, 1997.

\bibitem{mogilner1999}
A.~Mogilner and L.~Edelstein-Keshet.
\newblock A non-local model for a swarm.
\newblock {\em J. Math. Biol.}, 38(6):534--570, 1999.

\bibitem{morale2005}
D.~Morale, V.~Capasso, and K.~Oelschl{\"a}ger.
\newblock An interacting particle system modelling aggregation behavior: from
  individuals to populations.
\newblock {\em J. Math. Biol.}, 50(1):49--66, 2005.

\bibitem{otto2001}
F.~Otto.
\newblock The geometry of dissipative evolution equations: the porous medium
  equation.
\newblock {\em Comm. Partial Differential Equations}, 26(1-2):101--174, 2001.

\bibitem{raoul2012}
G.~Raoul.
\newblock Nonlocal interaction equations: stationary states and stability
  analysis.
\newblock {\em Differential Integral Equations}, 25(5-6):417--440, 2012.

\bibitem{sun2012}
H.~Sun, D.~Uminsky, and A.~L. Bertozzi.
\newblock Stability and clustering of self-similar solutions of aggregation
  equations.
\newblock {\em J. Math. Phys.}, 53(11):115610, 18, 2012.

\bibitem{theil2006}
F.~Theil.
\newblock A proof of crystallization in two dimensions.
\newblock {\em Communications in Mathematical Physics}, 262(1):209--236, 2006.

\bibitem{topaz2004}
C.~M. Topaz and A.~L. Bertozzi.
\newblock Swarming patterns in a two-dimensional kinematic model for biological
  groups.
\newblock {\em SIAM J. Appl. Math.}, 65(1):152--174, 2004.

\bibitem{topaz2006}
C.~M. Topaz, A.~L. Bertozzi, and M.~A. Lewis.
\newblock A nonlocal continuum model for biological aggregation.
\newblock {\em Bull. Math. Biol.}, 68(7):1601--1623, 2006.

\bibitem{toscani2000}
G.~Toscani.
\newblock One-dimensional kinetic models of granular flows.
\newblock {\em M2AN Math. Model. Numer. Anal.}, 34(6):1277--1291, 2000.

\bibitem{toscani2006}
G.~Toscani.
\newblock Kinetic models of opinion formation.
\newblock {\em Commun. Math. Sci.}, 4(3):481--496, 2006.

\bibitem{giessen1999}
A.~E. van Giessen and B.~Widom.
\newblock Path dependence of surface–tension scaling in binary mixtures.
\newblock {\em Fluid Phase Equilibria}, 164(1):1--12, 1999.

\bibitem{villani2003}
C.~Villani.
\newblock {\em Topics in optimal transportation}, volume~58 of {\em Graduate
  Studies in Mathematics}.
\newblock American Mathematical Society, Providence, RI, 2003.

\bibitem{zhang2009}
J.~Zhang and D.~Y. Kwok.
\newblock A mean-field free energy lattice {B}oltzmann model for multicomponent
  fluids.
\newblock {\em The European Physical Journal Special Topics}, 171(1):45--53,
  2009.

\bibitem{zinsl2014}
J.~Zinsl and D.~Matthes.
\newblock Transport distances and geodesic convexity for systems of degenerate
  diffusion equations.
\newblock {\em Calc. Var. Partial Differential Equations}, 54(4):3397--3438,
  2015.

\end{thebibliography}

\end{document}